\documentclass[11pt,reqno]{amsart}
\usepackage[margin=1in]{geometry}
\usepackage{amsmath,amssymb,amsthm}
\usepackage[T1]{fontenc}
\usepackage{lmodern,microtype}
\usepackage{graphicx}
\usepackage{tikz}
\usetikzlibrary{arrows.meta,calc}
\usepackage{xcolor}
\usepackage[hidelinks]{hyperref}
\newtheorem{theorem}{Theorem}
\numberwithin{equation}{section}
\newtheorem{lemma}{Lemma}[section]
\newtheorem{corollary}[theorem]{Corollary}

\newtheorem{remark}{Remark}
\theoremstyle{plain}
\newcommand{\Z}{\mathbb Z}
\newcommand{\Pp}{\mathbb P}
\newcommand{\Ee}{\mathbb E}
\newcommand{\Ecal}{\mathcal E}
\newcommand{\Gcal}{\mathcal G}
\newcommand{\one}{\mathbf 1}
\newcommand{\Pois}{\operatorname{Pois}}
\hypersetup{pdftitle={Percolative MDLA: explosion and linear growth},
 pdfauthor={Eviatar B. Procaccia, Haoyu Wang, Yuan Zhang}}
\title{Percolative MDLA: phase transition and critical behaviour}
\author[Eviatar B. Procaccia]{Eviatar B. Procaccia$^1$}
\address[Eviatar B. Procaccia]{Faculty of Data and Decision Sciences,
Technion - Israel Institute of Technology, Haifa, Israel}
\email{eviatarp@technion.ac.il}
\thanks{$^1$Faculty of Data and Decision Sciences,
Technion - Israel Institute of Technology, Haifa, Israel.}

\author[Haoyu Wang]{Haoyu Wang$^2$}
\address[Haoyu Wang]{Center for Applied Statistics, School of Statistics,
Renmin University of China, Beijing, China}
\email{wanghaoyu123@ruc.edu.cn}

\author[Yuan Zhang]{Yuan Zhang$^2$}
\address[Yuan Zhang]{Center for Applied Statistics, School of Statistics,
Renmin University of China, Beijing, China}
\email{zhang\_probab@ruc.edu.cn}
\thanks{$^2$Center for Applied Statistics, School of Statistics,
Renmin University of China, Beijing, China.}
\date{}
\begin{document}
\begin{abstract}
In this paper we introduce the model of Percolative MDLA (PMDLA) on $\Z^2$ which engulfs the entire occupied percolation component of each
particle that reaches the aggregate. We prove a phase transition of finite time explosion, as a function of the particle density $\lambda$, where the critical value equals
$\lambda_c=-\log(1-p_c^{\rm site}(\Z^2))$.
Below $\lambda_c$, the process is nonexplosive and has a linear upper
bound on its radius. For $2/3<\lambda<\lambda_c$, it also has a linear lower bound on the growth rate. Above $\lambda_c$, an infinite
component is swallowed in finite time a.s. At $\lambda_c$, every update is
finite, but the updates accumulate in finite time, causing a blowup. Excluding criticality, the results are generalized to all higher dimensions. 
\end{abstract}
\maketitle
\setcounter{tocdepth}{1}
\tableofcontents
\section{Introduction}\label{sec:intro}
In this paper we introduce a variant of Multi-particle diffusion-limited aggregation (MDLA), in which a particle reaching the outer boundary causes its entire occupied percolation component to be absorbed. This makes site
percolation relevant to both growth and explosion. We call the process
\emph{Percolative MDLA}, abbreviated PMDLA.
Here explosion includes an accumulation of infinitely many finite
updates. This distinction is essential at criticality.

Classical multi-particle DLA (MDLA) starts from a Poisson cloud of
independent continuous-time random walks. When a particle at a site
$v$ attempts to jump onto the aggregate, $v$ is added and all particles
there become inactive. Kesten and Sidoravicius
\cite[Theorems 2 and 3]{KS} established diffusive growth in one dimension
below density $1$: the front is $O(\sqrt t)$ almost surely, and its
ratio to $\sqrt t$ is bounded away from zero in probability.
Sly \cite[Theorem 1]{sly2020one} proved a positive limiting speed above density
$1$, and obtained a linear lower bound for the diameter in higher
dimensions at the same densities \cite[Corollary 2]{sly2020one}.

At density $1$, Elboim, Nam and Sly \cite[Theorem 1]{ENS} proved
critical growth on the scale $t^{2/3}$ and identified its random
scaling limit. In particular, if $L_t$ denotes the one-dimensional
front, then $t^{-2/3}L_t$ converges in distribution to a positive,
nondegenerate random variable. Their process-level limit is the
integral of a negative power of an $8/3$-dimensional Bessel process.

Sidoravicius and Stauffer \cite[Theorem 1.1]{SS} studied a different
MDLA model, with Bernoulli initial occupation and moving particles
subject to exclusion. In dimensions $d\ge2$ and at sufficiently high
density, they proved with positive probability that the aggregate,
together with the finite regions it encloses, contains a ball growing
linearly in time. Their proof uses competing first-passage percolation
to control the vacant regions encountered by the aggregate.

The Pool model of Cai, Procaccia and Zhang \cite{Pool} converts each
absorbed particle's mass into area while keeping the aggregate a disk.
For its jump-driven version, \cite[Theorem 2]{Pool} proves explosion
above intensity $1$ and nonexplosion at $1$. Both models admit a
conditional Poisson description of the remaining particles.
The geometric growth rules differ: the Pool model prescribes a
circular boundary, whereas PMDLA retains the geometry of the
absorbed occupied components and counts each new site only once.

For Bernoulli site percolation on $\Z^2$, every cluster at $p_c$ is
finite \cite{Nolin,Russo}; see Lemma \ref{lem:percolation}. At $\lambda_c=-\log(1-p_c)$, it ensures that every finite-index PMDLA
engulfment is finite. The critical clusters nevertheless provide
long connections on all scales, as expressed by the crossing and
arm estimates used in Lemma \ref{lem:contacts}.

The contrast at the respective critical parameters is the central
point: critical percolation has no infinite cluster, and the critical
jump-driven Pool model does not explode, yet PMDLA explodes through
an accumulation of finite engulfments. PMDLA combines diffusive
particle supply with instantaneous absorption of percolation
components. Their geometry supplies enough long connections to
make successive macroscopic advances occur in summable time.
Theorem \ref{thm:critical} shows that this interaction can produce
explosion even though each individual critical component is finite. For discussion on the classical DLA and other related aggregation models, see Section \ref{sec:related}.

\subsection{Model and notation}

Vertices of $\Z^2$ are adjacent when their $\ell^1$ distance is one.
Write $e_1=(1,0)$ and $e_2=(0,1)$.
For $F\subset\Z^2$, let
$\partial^oF=\{x\notin F:\|x-y\|_1=1\text{ for some }y\in F\}$.
Let $C(x,O)$ be the nearest-neighbor component of $x$ in $O\subset\mathbb{Z}^2$, with
$C(x,O)=\varnothing$ if $x\notin O$.
We write $d(x,F)=\inf_{y\in F}\|x-y\|_1$,
$d_\infty(x,F)=\inf_{y\in F}\|x-y\|_\infty$, and
$\Lambda_r(v)=\{x\in\Z^2:\|x-v\|_\infty\le r\}$; write $\Lambda_r=\Lambda_r(0)$.
All logarithms are natural except $\log_2$.
We write $a_+=\max\{a,0\}$ and use $\operatorname{Exp}(a)$ for the
exponential law of rate $a$.

Initially each site contains an independent $\Pois(\lambda)$ number
of particles. The particles perform independent rate-one continuous-time
simple random walks $X^i$, and are initially free. Let $O_0$ be the
set of occupied sites. Starting from $\Ecal_{0-}=\{0\}$, set
$\Ecal_0=\{0\}\cup\bigcup_{x\in\partial^o\{0\}}C(x,O_0)$.
If this set is finite, remove all its particles from the free population.

Put $\tau_0=0$. Given the finite aggregate at $\tau_k$, let
$\tau_{k+1}$ be the next time that a free particle enters
$\partial^o\Ecal_{\tau_k}$. At this time, let $z$ be its arrival site
and let $O^-_{\tau_{k+1}}$ be the sites occupied by particles free
just before the update.
Set
$\Ecal_{\tau_{k+1}}=\Ecal_{\tau_k}\cup C(z,O^-_{\tau_{k+1}})$
and remove every particle at a newly swallowed site from the free
population. The aggregate is constant between updates. Removed particles
may continue their walks, but have no further effect.
Note that particles at one swallowed site all leave the free population,
while that site is added only once.

Stop if an infinite component is swallowed. Denote this time by $T_I$,
with $T_I=\infty$ if it never occurs, and let $\tau_k\equiv T_I$ after such an update. The explosion time is
$T=\lim_{k\to\infty}\tau_k$. Thus $T\le T_I$; if $T<T_I$, infinitely
many finite updates accumulate at $T$. if an initial infinite engulfment occurs, then $T=T_I=0$. We use $\inf\varnothing=\infty$.
If an entrance time is infinite, all later update times are also infinite.
No evolution is prescribed after $T$.

Write $\eta_t^f(x)$ for the free particle count after any update at $t$,
$K_t=\Ecal_t\cup\partial^o\Ecal_t$, and
$h_e(t)=\max_{x\in\Ecal_t}x\cdot e$ for
$e\in\{\pm e_1,\pm e_2\}$.
The radius is $R_t=\max_{x\in\Ecal_t}\|x\|_1$, and the buffered
rightmost coordinate is $H_t=\max_{x\in K_t}x_1=h_{e_1}(t)+1$.
Before $T$, the sets $\Ecal_t$ and $K_t$ are finite, connected and
increasing; $K_t$ contains no free particle.
Finally, we use the standard notions of percolation theory. Put $p_c=p_c^{\rm site}(\Z^2)$ to be the critical density of Bernoulli site percolation of $\Z^2$, and also let
$\lambda_c=-\log(1-p_c)$.

\subsection{Main results}

The following theorems describe the explosion phase transition and the growth
bounds for the process in $\Z^2$. 

The first theorem states non explosion in the subcritical regime and gives a linear upper bound on the growth rate. 
\begin{theorem}\label{thm:upper}
If $0<\lambda<\lambda_c$, then $T=\infty$ almost surely. There is a
deterministic $C_\lambda<\infty$ such that
$\limsup_{t\to\infty}R_t/t\le C_\lambda$ almost surely.
\end{theorem}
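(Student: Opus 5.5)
We will compare PMDLA with a dominating process built from the space–time occupation field. The key observation is that at any fixed time $t$ the free particles form, by Poisson thinning, a field dominated by i.i.d.\ $\Pois(\lambda)$ counts. The set of sites carrying at least one particle of the \emph{full} (unremoved) system is therefore, at each fixed time, a Bernoulli site percolation of density $p=1-e^{-\lambda}<p_c$; this is exactly where $\lambda<\lambda_c$ enters. Removing particles only shrinks occupied components. So every engulfment $C(z,O^-_{\tau_k})$ is contained in the component of $z$ in the full occupied set at time $\tau_k$.

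To make this uniform over time, we discretize. Consider the full particle system, in which no particles are removed. Say that a site $x$ is \emph{$\delta$-occupied in the time window} $[j\delta,(j+1)\delta]$ if some particle visits $x$ during that window. For $\delta$ small this is still a $1$-dependent (finite-range in space, after truncating walk excursions) field with marginal density at most $1-e^{-\lambda(1+c\delta)}<p_c$. By exponential decay of subcritical cluster radii, together with a Liggett–Schonmann–Stacey domination or a direct block argument for dependent fields, the window clusters have radius tails $\Pp(\operatorname{rad}\ge r)\le e^{-cr}$ for $r\ge 1$, uniformly in the window. Any PMDLA update in window $j$ at arrival site $z$ adds a set contained in the window-$j$ cluster of $z$.

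The growth bound then follows as in first-passage arguments. An advance of the aggregate needs a free particle to arrive at $\partial^o\Ecal$. The union bound over paths controls the number of arrivals in unit time near a given region: particle arrivals at the boundary of a box of radius $r$ occur at rate $O(r)$. Define a lattice animal/greedy path: if $R_t$ increases from $r$ to $r'$ within time $s$, there is a chain of windows and arrival sites $z_1,z_2,\dots$ with $\|z_{i+1}\|\le \|z_i\|+\operatorname{rad}_i+1$. Summing the radius contributions, a chain reaching distance $Ct$ by time $t$ would require either about $Kt$ arrivals along a chain, which is impossible by Poisson counting since each step needs a particle reaching a site within distance $1$ of the previous cluster during the current or a later window, or a total cluster radius of order $Ct$. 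Both have probability at most $e^{-ct}$ once $C$ is large, after a union bound over the $e^{O(t)}$ possible chains. Borel–Cantelli over integer $t$ gives $\limsup R_t/t\le C_\lambda$. Since $R_t<\infty$ for all $t$ forces finitely many updates in each window chain, and each update adds at least one site, we also get finiteness of updates by time $t$. The number of updates is at most the number of arrivals at $K_t$, which is a finite Poisson-dominated count, so $T=\infty$ a.s. The time-$0$ step is finite because $C(x,O_0)$ is a.s.\ finite.

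The main obstacle is the chain union bound. It must count the combinatorial choices for chains of clusters with exponentially summable weights while handling the dependence of window clusters across consecutive windows. I would first try to handle it by working with space–time blocks of side $\delta$ in time and a fixed size in space, and applying a Peierls-type count of self-avoiding block sequences.
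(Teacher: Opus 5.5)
There is a genuine gap, and it is the step you yourself flag as the main obstacle. The chain union bound carries the whole proof, since your nonexplosion argument is also reduced to it, and with the unremoved system as the dominating object it cannot be closed as sketched.

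Your containment observation is correct: an update in window $j$ adds a subset of the window-$j$ visited cluster of its arrival site. But this controls each update separately. Bounding the reach of an ancestral chain $z_1,z_2,\dots$ needs the joint probability of three things: each $z_{i+1}$ is adjacent to the previous cluster, the arrivals occur at increasing times, and the radii sum to order $Ct$. That joint probability has to factorize along the chain.

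The full system gives no such factorization. Consecutive windows are built from the same particles, so their visited fields are nearly identical. Conditioning on the past of the full system essentially determines the next window.

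For the same reason the ``$e^{O(t)}$ chains'' count is unjustified, because each step jumps a random, unbounded distance. Suppose you instead cap all window radii in $\Lambda_{Ct}$ by $O(\log t)$ and count crudely. Even ignoring dependence, you get bounds of order $(Ct\log^2t/n)^n$, which give only $R_t=O(t\log^3t)$. A linear bound needs each displacement weighted by its probability, with an exponential moment, and that is again a factorization statement.

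A secondary problem concerns subcriticality of the window field. It is not $1$-dependent, since walk ranges have unbounded range. Liggett--Schonmann--Stacey only transfers marginals close to $0$ or $1$, so it cannot show that a dependent field with marginal just below $p_c$ is subcritical. That step would need its own finite-size criterion and renormalization.

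The missing idea is to condition on the aggregate history rather than dominate by the full system. By Lemma \ref{lem:density}, given $\Gcal_{s-}$ the free counts are independent Poisson with means $u_{s-}\le\lambda$. The identity $\Ee[N(y)h(N)]=u_{s-}(y)\Ee[h(N+\delta_y)]$ then shows that, at an entrance $y\to z$, the field is such a Poisson field plus one particle at $z$.

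So, conditionally on the past, the engulfment lies in $\{z\}\cup\bigcup_{w\sim z}C(w,\omega)$ with $\omega$ Bernoulli$(1-e^{-\lambda})$. Its conditional mean contribution to $V_\theta(F)=\sum_{x\in F}e^{\theta\|x\|_1}$ is at most $m_\theta e^{\theta\|z\|_1}$. Two further facts give drift at most $b_\theta V_\theta$: the entrance rate is at most $2\lambda$ per boundary site, and $\sum_{z\in\partial^oF}e^{\theta\|z\|_1}\le4e^\theta V_\theta(F)$. Stopping at $\sigma_n=\inf\{t:|\Ecal_t|\ge n\}$ and applying Gronwall gives $\Ee V_\theta(\Ecal_{t\wedge\sigma_n})\le m_\theta e^{b_\theta t}$.

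This is the correctly weighted sum over all ancestral chains. The independence you lacked is supplied by the conditional Poisson structure. Taking $\theta=0$ gives $\Pp(\sigma_n\le t)\le m_0e^{b_0t}/n$, hence $T=\infty$. Taking $\theta>0$, together with Markov's inequality and Borel--Cantelli at integer times, gives the linear bound.

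If you want to keep a path-counting proof, you need two additional ingredients. One is a substitute source of factorization, for instance a disjoint-occurrence inequality for the Poisson process of trajectories, using that distinct updates consume distinct particles. The other is a real subcriticality proof for the window field.
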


The second theorem states that in the subcritical regime, given that $\lambda>2/3$, there is a linear lower bound on the growth rate.
\begin{theorem}\label{thm:linear}
If $2/3<\lambda<\lambda_c$, there is a deterministic $v_\lambda>0$ such that
\begin{equation}\label{eq:main}
 \liminf_{t\to\infty}\frac{h_e(t)}t\ge v_\lambda
 \quad\text{almost surely, for every }e\in\{\pm e_1,\pm e_2\}.
\end{equation}
\end{theorem}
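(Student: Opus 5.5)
By the symmetry of the model under the lattice reflections it suffices to treat $e=e_1$ and the buffered front $H_t=h_{e_1}(t)+1$. The plan is a renewal argument in the spirit of Kesten--Sidoravicius and Sly. I will show that from any time $s$ the front advances by one unit within a time whose conditional law, given $\mathcal F_s$, is stochastically dominated by a fixed random variable $\xi$ with $\Ee\xi<\infty$. The strong law for sums of i.i.d.\ copies of $\xi$ then yields $\liminf H_t/t\ge 1/\Ee\xi=:v_\lambda>0$.

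\textbf{Conditional Poisson structure.} The first step is the description of the free particles. Given $\mathcal F_t$, the configuration $\eta^f_t$ is a Poisson field whose intensity at $x\notin K_t$ is
\[
\lambda\,\Pp_x\bigl(\text{the walk avoids the past aggregates}\bigr)
\]
in a suitable sense. This is a thinning of $\lambda$, exactly as in the Pool model, with the added feature that swallowed components remove their particles. Since $\lambda<\lambda_c$, the occupied sites at time $0$ form a subcritical Bernoulli percolation with $p=1-e^{-\lambda}<p_c$. I will use exponential decay of cluster radii to control the size of each engulfment; this is the ingredient already underlying Theorem~\ref{thm:upper}, which also guarantees $T=\infty$. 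Note that the engulfments only help the front, so the real issue is the supply of particles.

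\textbf{Supply of particles (use of $\lambda>2/3$).} The second step, and the crux, is a Sly-type mass-balance argument. Each unit advance of the front in direction $e_1$ consumes roughly one site. Particles to the right of the front are depleted only through absorption, and each absorbed site removes on average $\lambda/(1-e^{-\lambda})$ particles per occupied site while adding one site. I will compare the particle density ahead of the front with the cost of advancing. Counting free mass in the half-plane $\{x_1>H_t\}$ restricted to a strip of width $L$, I expect the threshold condition to be that the expected mass available per advanced column, after accounting for neighbouring rows being reached through the two lateral directions of the strip, exceeds the number of sites needed. In the relevant geometry (the front moving while the aggregate also fills the strip, with three exposed sides per advance) the condition should read $3\lambda/2>1$, which is where $2/3$ enters. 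Concretely, I will adapt Sly's argument: define a "good" event that a block ahead of the front retains density at least $\lambda'>2/3$ of free particles, and show by a multiscale induction that good blocks occur with probability tending to one at larger scales. On a good block, a particle hits the front within time $O(L^2)$ with positive probability, and a linear number of such hits in time $O(L)$ advances the front by order $L$.

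\textbf{Concluding the growth bound.} The third step combines good blocks along the $e_1$ axis into an oriented percolation-type renewal structure giving linear growth with high probability, and then passes to almost sure $\liminf$ bounds via Borel--Cantelli at geometric times $t_n=2^n$. The upper bound $R_t\le C_\lambda t$ from Theorem~\ref{thm:upper} is used to keep the relevant region, and hence the union bound, polynomial in $t$. The main obstacle is the multiscale density-retention estimate: the dependence created by absorption history makes the free field non-Poisson beyond the thinning description. I would first try to dominate from below by an explicit comparison process in which every absorption is treated as a single site, which only slows growth, and to which Sly's one-dimensional-type argument applies directly.
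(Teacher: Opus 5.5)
\textbf{There is a genuine gap, and it lies exactly where the difficulty of the theorem lies.}

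Your renewal step is false as stated. You claim that from \emph{every} time $s$ the next unit advance of $H$ takes a time whose conditional law given $\Gcal_s$ is dominated by one fixed $\xi$ with $\Ee\xi<\infty$. But histories on which $H$, indeed the whole aggregate, has not moved during $[0,s]$ have positive probability. On such histories:
\begin{itemize}
\item the rate of particles that have stayed strictly ahead of the front is $\frac{\lambda}{2}\Pp(\max_{r\le s}W_r\le 0)\asymp s^{-1/2}$;
\item the whole free field next to a small aggregate has density $O(1/\log s)$.
\end{itemize}
So the conditional waiting time for \emph{any} update grows with $s$, and no single $\xi$ works.

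Recovering from such depleted histories is the bulk of the paper's proof. It carries levels $J_n$ with guaranteed profiles $Y_{T_n}\ge z_{\alpha_{J_n}}$, and falls back via \eqref{eq:fallback} after a failed attempt. A geometric drift for $V(j)=\alpha_j^{-6}$ then bounds, uniformly over histories, the conditional second moments of the return times to the good profile $z_{\alpha_*}$. A martingale law of large numbers concludes.

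\textbf{The threshold $2/3$ is also not derived.} A count of particles removed per absorbed site does not produce $\frac32\lambda>1$. The multiscale density-retention estimate you defer would carry all the content.

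The paper's mechanism is Sly's self-amplification with conforming particles:
\begin{itemize}
\item Walks whose first coordinate has always exceeded $H_r$ are never queried. So engulfments matter only by raising $H$; subcriticality is used only to ensure $T=\infty$.
\item These particles jump into the column $H_t$ at rate $S_t$ per site. Under a slope-$\alpha$ profile, the barrier estimate gives $S_t\ge\lambda(1-\varepsilon)\alpha$.
\item There are three entry routes: direct arrivals at the rightmost boundary vertex $U$, and arrivals at $U\pm e_2$ whose next jump goes to $U$ (probability $1/4$ each). Together they give an effective advance rate of about $\frac32 S_t$.
\item Hence the front moves at speed about $\frac32\lambda\alpha>\alpha$, which upgrades the profile to $z_{\kappa\alpha}$.
\end{itemize}

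\textbf{Your fallback comparison does not close the gap.} You propose comparing with single-site absorption, ``which only slows growth''. This is unjustified: replacing an engulfment by one site changes which particles are removed, and MDLA-type dynamics have no known monotonicity in the aggregate. Even granting the comparison, Sly's argument for classical MDLA needs density above $1$. It is the two sideways routes that bring the threshold down to $2/3$.
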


\begin{remark}
We expect Theorem \ref{thm:linear} to hold for every
$0<\lambda<\lambda_c$, so that planar PMDLA has linear growth
throughout its subcritical regime. This parallels the conjecture
of linear growth for classical planar MDLA at every positive density
\cite{SS,sly2020one}.
For small $\lambda$, component engulfment usually reduces to one-site attachment,
suggesting similar large-scale growth in the two models. See Figure \ref{fig:sparse} for asparse $\lambda=0.2$ simulation and Figure \ref{fig:slightlysubcritical} for a simulation with slightly subcritical density $\lambda=0.8>2/3$, where Theorem \ref{thm:linear} holds.
\end{remark}

The next theorem gives explosion in the supercritical regime. In particular, it is shown that updates do not accumulate before an infinite component
is swallowed.
\begin{theorem}\label{thm:supercritical}
If $\lambda>\lambda_c$, then $T=T_I<\infty$ almost surely.
\end{theorem}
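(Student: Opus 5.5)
We prove two things: updates cannot accumulate before $T_I$ (so $T=T_I$), and $T_I<\infty$ almost surely. Start with the static picture. Since each site independently carries $\Pois(\lambda)$ particles, $O_0$ is Bernoulli site percolation with density $1-e^{-\lambda}>p_c$. So almost surely there is a unique infinite occupied cluster $\mathcal C_\infty$, and its complement consists only of finite pieces. If $\partial^o\{0\}$ meets $\mathcal C_\infty$, then $T=T_I=0$. Otherwise $\Ecal_0$ is finite and the process starts.

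\textbf{Step 1: finite time to reach the infinite cluster.} Use the conditional Poisson description of the free particles. Given the history up to an update time, the free particles outside $K_{\tau_k}$ form a Poisson field with some intensity; particles at sites never swallowed keep their original law after thinning. The cleanest route is a direct construction that avoids controlling the whole configuration. Fix a site $y\in\mathcal C_\infty$ at finite distance from $0$, and a finite path $\gamma$ from $\partial^o\{0\}$ to $y$. Each time the aggregate is finite, consider a free particle that starts in $\mathcal C_\infty$ and is not swallowed. Each update swallows only a finite component, so $\mathcal C_\infty$ minus the finite set $K_{\tau_k}$ still contains particles. If an update ever swallows a component containing a site of $\mathcal C_\infty$, that component is infinite and $T_I$ occurs. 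So before $T_I$, all particles on $\mathcal C_\infty$ remain free and sitting near their initial positions.

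The key event is a free particle starting near $\gamma$ walking onto $\partial^o\Ecal_t$ at a site of $\mathcal C_\infty$ (or adjacent to it), at a moment when the occupancy there still connects to infinity. Occupancy of $\mathcal C_\infty$ changes over time because particles move. So work with the time-$t$ occupied set $O_t^-$ and note that it dominates Bernoulli percolation of density $1-e^{-\lambda}$ off $K_t$. This holds because the free particles form a Poisson field whose intensity is at least $\lambda$ outside $K_t$. That monotonicity is the claim I would verify first: particles removed are exactly those on swallowed sites, and the remaining ones spread as independent walks. The sticking point is conditioning on the aggregate's history; I would instead couple with the unconditioned independent-walk system and use that swallowed particles lie inside $K_t$. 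Hence, at every time $t<T$, almost surely $O^-_t\setminus K_t$ has an infinite cluster meeting any given large box with probability close to one.

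\textbf{Step 2: no accumulation (the main obstacle).} Suppose $T<T_I$, so $\tau_k\uparrow T<\infty$ with all updates finite. In a finite time interval, only finitely many particles enter any fixed finite box, since jumps into a box occur at rate bounded by the Poisson mass nearby plus a Borel–Cantelli bound for far particles. So infinitely many updates force $R_{\tau_k}\to\infty$ before $T$. Then $K_t$ grows to infinite diameter. Because $K_t$ is connected, once its diameter exceeds $r$ it crosses annuli $\Lambda_{2^{j+1}}\setminus\Lambda_{2^j}$ for all $2^j<r$. Supercritical percolation (at time $\tau_k$, by the domination in Step 1) gives that the infinite cluster of $O^-_{\tau_k}$ intersects the outer boundary of such a crossing set with probability tending to one in $j$. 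Any particle arrival at a site of $\partial^o\Ecal$ lying in that infinite cluster then swallows an infinite component. One arrival at such a site happens before the next finite update with probability bounded below at each scale, so a.s. some update is infinite. This contradicts $T<T_I$.

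What makes this step delicate is that the exploration is adaptive. I would handle it by a union over scales. For each $j$, bound the probability that all of the finitely many rectangles crossing annulus $j$ fail to be hit by the infinite cluster. Use exponential decay of finite supercritical clusters' radii (dual subcritical clusters) to make these bounds summable.

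\textbf{Step 3: $T_I<\infty$.} By Step 2, $T=T_I$. By Theorem-free elementary estimates, $T<\infty$ on $\{T_I=\infty\}$ would be needed, so it remains to exclude $T=T_I=\infty$. On that event the aggregate grows forever with finite updates. A particle from the nearby infinite cluster of $O^-_t$ adjacent to $K_t$ reaches $\partial^o\Ecal_t$ within time $O(1)$ with probability bounded below, uniformly in $t$. This uses Step 1's domination again, since $\partial^o K_t$ sits next to fresh Poisson mass. A conditional Borel–Cantelli over unit time intervals then gives that an infinite component is swallowed in finite time almost surely.
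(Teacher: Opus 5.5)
Your proposal has two genuine gaps. The first is that Step 1 rests on a false monotonicity. The free field outside $K_t$ does \emph{not} have intensity at least $\lambda$. By Lemma \ref{lem:density}, $u_t(x)=\lambda\Pp_x(X_r\notin K_{t-r},\ 0\le r\le t)\le\lambda$. The depletion is largest next to the aggregate, and only the total deficit $\sum_x(\lambda-u_t(x))\le\lambda|K_t|(1+2t)$ is controlled. Your coupling with the unconditioned walks gives the reverse inequality: free particles are a subset of all particles, and absorbed ones keep walking outside $K_t$.

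This invalidates Step 3. Nothing supports a chance, uniform in $t$, that \emph{fresh Poisson mass} next to $K_t$ links the aggregate to an infinite cluster within time $O(1)$. History-free bounds such as $u_s(x)\ge\lambda e^{-s}$ for $x_1>H_s$ decay in $s$, so a conditional Borel--Cantelli argument over unit intervals cannot close.

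This is exactly where the paper needs the forward-growth machinery of Section \ref{sec:linear}. Since $\lambda>\lambda_c>2/3$, Lemma \ref{lem:good} produces, after a deterministic delay and with conditional probability at least $1/2$, either explosion or a good history $Y\ge z_{\alpha_*}$. At such a history, Lemma \ref{lem:profile-density} gives $u_s(x)\ge\lambda[1-b(x_1-H_s)]$ with $b(d)\to0$. Only then does a trial work: stationary particles on $x_1,\dots,x_{M-1}$, an infinite stationary cluster from $x_M$ in the forward half-plane, and one particle jumping $x_1\to U$. Together these force $T\le s+h$ with conditional probability at least $p>0$ (see \eqref{eq:super-trial}), and Lemma \ref{lem:repetition} upgrades this to $T<\infty$ almost surely.

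The second gap is that Steps 1 and 2 treat an occupied cluster at a single instant as if it persisted. This applies both to $\mathcal C_\infty$ of $O_0$ and to the infinite cluster of $O^-_{\tau_k}$. Their particles start jumping immediately, so sites are vacated. Consequently:
\begin{itemize}
\item \emph{particles on $\mathcal C_\infty$ remain free and sitting near their initial positions} is false;
\item \emph{swallowing a site of $\mathcal C_\infty$ is an infinite engulfment} is false;
\item the cluster present at $\tau_k$ need not exist at the next arrival;
\item \emph{an arrival at such a site before the next finite update with probability bounded below} is unjustified, precisely in the regime where updates accumulate.
\end{itemize}

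Your trapping intuition is right, but the geometry must be frozen over a deterministic window. For $a=nh$ with $1-\exp\{-\lambda e^{-2h}\}>p_c$, keep only the particles free at $a$ that make no jump in $(a,a+h]$. Here the finite deficit is exactly what you need. Adding independent Poisson counts of means $(\lambda-u_a(x))e^{-2h}$ turns this stationary set into Bernoulli percolation while changing only finitely many sites. By Lemma \ref{lem:percolation}(3), the stationary set therefore has an infinite cluster $\mathcal I_a$ all of whose complementary components are finite. The connected aggregate stays in the finite component containing $\Ecal_a$ until it first adds a vertex of $\mathcal I_a$, and that update engulfs all of $\mathcal I_a$. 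Hence only finitely many finite updates fit in $(a,a+h]$ before $T_I$, and no scale-by-scale union bound over adaptive crossings is needed.
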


In the critical regieme, we establish that though every cluster engulfed is finite, there is a.s. infinite accumulation by finite time. 
\begin{theorem}\label{thm:critical}
If $\lambda=\lambda_c$, then $T<\infty$ and $T_I=\infty$ almost surely.
\end{theorem}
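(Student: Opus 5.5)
We prove the two claims separately: first $T_I=\infty$, then $T<\infty$. The whole argument runs through the conditional Poisson structure of the free particles. At $\lambda=\lambda_c$ each site is initially occupied with probability $1-e^{-\lambda_c}=p_c$. As long as the aggregate is finite, and conditionally on the history, the free configuration off $K_t$ is a Poisson field whose intensity is at most $\lambda_c$ pointwise. Here we compare with independent walkers started from a Poisson($\lambda_c$) field, where removal only thins the field. So the occupied set $O^-_{\tau_{k+1}}$ off the aggregate is stochastically dominated by Bernoulli($p_c$) site percolation.

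\emph{Step 1: $T_I=\infty$.} Each update swallows $C(z,O^-_{\tau_{k+1}})$. By the domination above, this component lies inside a critical percolation cluster. By Lemma \ref{lem:percolation}, critical clusters are a.s. finite. The only subtlety is that the update time is random and the configuration at that time is not a fixed product measure. We handle this by noting that the free configuration at a fixed time $t$ is exactly Poisson with intensity $\le\lambda_c$. An infinite component at some random time would then force an infinite component at some rational time, up to the finitely many jumps that can occur in a bounded region in small time. More precisely, we use a countable union over rational times together with a local modification argument to reduce to fixed times. Each fixed-time event has probability zero, so a.s. no infinite component is ever swallowed before $T$.

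\emph{Step 2: $T<\infty$.} We build a multiscale sequence of advances. Set $r_n=2^n$ and let $s_n$ be the time at which $H_t$ first reaches $r_n$. It suffices to show $\sum_n(s_{n+1}-s_n)<\infty$ a.s., for instance $\Pp(s_{n+1}-s_n>\delta_n)\le\epsilon_n$ with $\sum\delta_n,\sum\epsilon_n<\infty$ and Borel--Cantelli. At time $s_n$ the aggregate touches $\partial\Lambda_{r_n}$. The particles in the annulus $\Lambda_{r_{n+1}}\setminus\Lambda_{r_n}$ have not all been touched, and a positive fraction of them are still free with near-Poisson($\lambda_c$) law.

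Here the arm and crossing estimates of Lemma \ref{lem:contacts} do the work. With probability bounded below, uniformly in scale, there is an occupied critical crossing from within distance $O(1)$ of $\partial^o\Ecal_{s_n}$ out to distance $r_{n+1}$. Iterating over $\log$-many independent sub-annuli raises the success probability to $1-\epsilon_n$. Once a crossing cluster is adjacent to the aggregate boundary, some free particle enters $\partial^o\Ecal$ at a site of that cluster within time $\delta_n$. Such a particle is at distance $O(1)$, and the number of neighbouring occupied sites near the contact region is large, so a time of order $\delta_n\to0$ suffices. The resulting update swallows the whole crossing and pushes $H$ past $r_{n+1}$.

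\emph{Main obstacle.} The hardest step is making the contact estimate rigorous despite the dependence created by the past evolution. The crossing must exist in the configuration at the random time of contact, and it must touch $\partial^o\Ecal_{s_n}$, whose shape is determined by the same particles. I would first try to condition on $\mathcal{F}_{s_n}$ and use the conditional Poisson description of the free field. Its intensity is close to $\lambda_c$ outside a thin neighbourhood of $K_{s_n}$, because swallowed particles come from sites in $\Ecal$. I would then apply a half-plane arm estimate, which is robust to small intensity deficits, from many disjoint candidate boundary points. Taking $\delta_n=n^{-2}$ and using a short-time Poisson argument for the arrival of a boundary particle should close the summability.
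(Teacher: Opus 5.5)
\textbf{Step 2 has a genuine gap.} It sits exactly where the paper does most of its work: controlling the free field.

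At criticality, an order-one density deficit on a positive fraction of sites destroys every crossing and arm estimate. The deficit has to be small enough to survive a union bound over the $O(V^2)$ sites used by the contact events. In the proof of Lemma \ref{lem:restart} it is at most $\lambda_c\Pp(\Pois(2)\ge L)+CV^{-8}$, against $|D_t|\,|I_t|\le CV^4$.

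Your reason for near-critical density, that ``swallowed particles come from sites in $\Ecal$'', is not correct. By \eqref{eq:historical}, $\lambda-u_t(x)=\lambda\Pp_x(X_r\in K_{t-r}\text{ for some }0\le r\le t)$. So the deficit comes from conditioning on avoidance of the whole past buffered aggregate. After a slow or stalled phase it need not be small even at distance of order $\sqrt t$ ahead of the tip.

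Near-critical density ahead of the tip is available only at good histories $Y_s\ge z_{\alpha_*}$. There, Lemma \ref{lem:profile-density} gives $u_s(x)\ge\lambda[1-b(x_1-H_s)]$ with superpolynomially small $b$, and only for $x_1>H_s$. Such histories are reached within a deterministic time only with conditional probability at least $1/2$ (Lemma \ref{lem:good}). That lemma rests on the $\lambda>2/3$ amplification of Lemmas \ref{lem:increment}--\ref{lem:boost}, which is why $\lambda_c\ge\log 2>2/3$ matters.

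Hence no bound $\Pp(s_{n+1}-s_n>\delta_n\mid\Gcal_{s_n})\le\epsilon_n$ can hold uniformly over histories. A Borel--Cantelli scheme started at time $0$ has no way to recover from an early failure.

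The paper instead runs a trial of duration one from a good history:
\begin{itemize}
\item It forces a forward segment of length $M\ge V_*$ (event $F_M$).
\item It measures progress $V$ from the fixed initial line $x_1=H$.
\item It localizes all contact events outside an $L$-neighbourhood $F_t$ of $K_{t-}$, with $L=\lceil 8\log V/\log\log V\rceil$. There the trial's own depletion is at most $\lambda_c\Pp(\Pois(2)\ge L)$, which holds only because $t\le 1$. Crossing this buffer costs only $V^{-o(1)}$.
\end{itemize}
This gives $\Pp(T<1)\ge p_*$ (Lemma \ref{lem:restart}), and Lemma \ref{lem:repetition} upgrades it to $T<\infty$ almost surely.

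\textbf{The contact step also fails as stated.} Consider the event that some site within $O(1)$ of $\partial^o\Ecal_{s_n}$ is joined to distance $r_{n+1}$ by an occupied path avoiding the aggregate. Its probability is bounded away from $1$ uniformly in scale. A closed matching circuit at scale $r_{n+1}$, whose probability is bounded below by Lemma \ref{lem:percolation}, blocks it. So sub-annuli cannot boost it to $1-\epsilon_n$, and they are not independent under the dynamics anyway.

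Moreover, a crossing present at time $s_n$ is typically broken within time of order $1/r_n$ as its particles jump. The entrance rate at each boundary site is at most $2\lambda$. Activation within time $\delta_n$ is therefore not automatic, and you give no bound on the number of activatable contacts.

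What is needed is an instantaneous rate bound, valid at every time of the trial and uniform over the aggregate and revealed past: updates multiplying $V$ by $1.01$ occur at conditional rate at least $c_1V^\eta$. The paper obtains this from the following ingredients.
\begin{itemize}
\item Lemma \ref{lem:contacts} gives $\sum_{z\in I}\Pp_{p_c}(A_z)\ge cr^\delta$ uniformly over finite connected obstacles. Its proof uses an open circuit and a closed matching circuit, traces the outer boundary of the open cluster to find a contact carrying both $A_z$ and a long closed matching arm, and then applies Harris's inequality with the arm bound $Cr^{-\delta}$.
\item A near-critical coupling on $D_t$.
\item The entrance (Palm) identity.
\item Harris's inequality for the short occupied path across the buffer.
\end{itemize}
Expected waiting times between geometric levels are then summable, and Markov's inequality replaces Borel--Cantelli.

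\textbf{Step 1's reduction to rational times is invalid.} An infinite cluster is not local: infinitely many of its sites change state in every positive time interval. As dynamical percolation shows, having no infinite cluster at each fixed time does not exclude exceptional random times.

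The right tool is the entrance law following Lemma \ref{lem:density}. Given the entrance time and directed edge, the other free counts are independent Poisson with means $u_{t-}\le\lambda_c$. So the engulfed set lies in the arrival site together with at most four critical clusters, which are finite by Lemma \ref{lem:percolation}. Induction over update indices then gives $T_I=\infty$ (Lemma \ref{lem:finite-update}).
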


\begin{figure}
    \centering
    \includegraphics[width=0.8\linewidth]{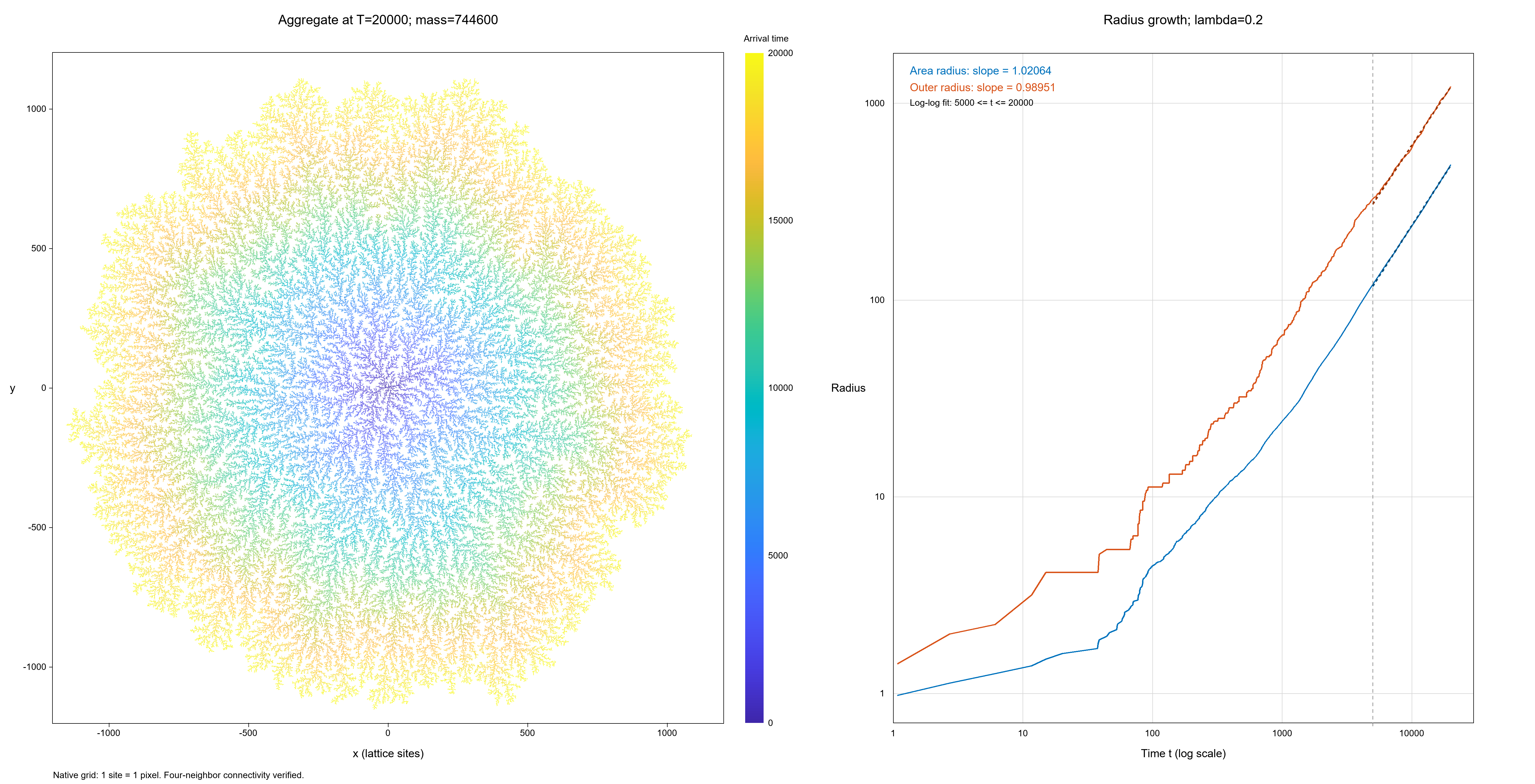}
    \caption{Simulation of PMDLA with $\lambda=0.2$. Panel on the right shows radius vs time on a log-log plot, indicating linear growth}
    \label{fig:sparse}
\end{figure}

\begin{figure}
    \centering
    \includegraphics[width=0.39\linewidth]{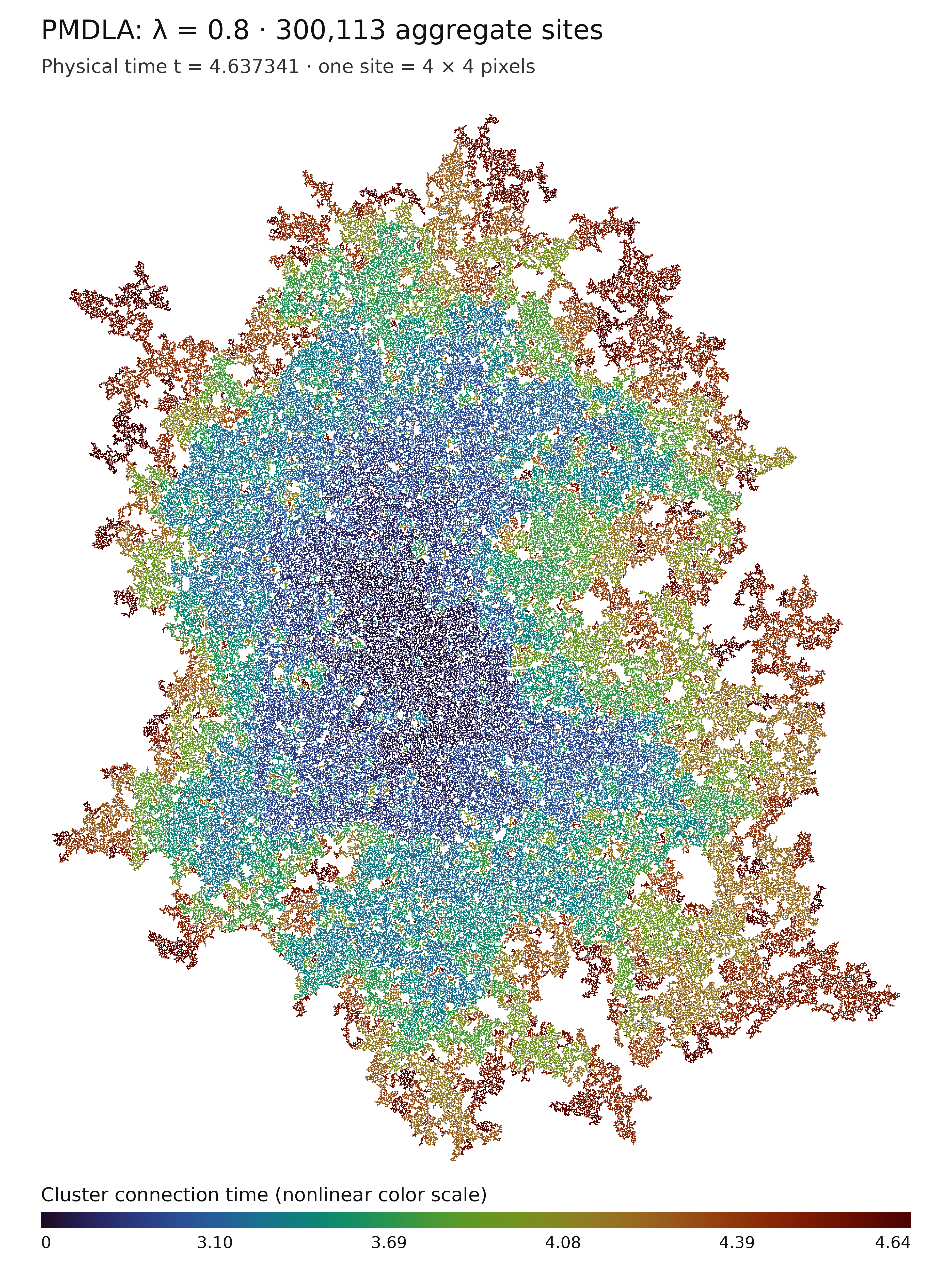}
    \includegraphics[width=0.47\linewidth]{configuration_1px_dark_with_time.png}
    \caption{Simulation of PMDLA with $2/3<\lambda=0.8<\lambda_c$. Aggregate on the left colors connecting clusters according to arrival time. Aggregate on the right contains $10^8$ particles.}
    \label{fig:slightlysubcritical}
\end{figure}
\subsection{Extension to higher dimensions}

For PMDLA on $\Z^d$, write $p_c(d)=p_c^{\rm site}(\Z^d)$ and
$\lambda_c(d)=-\log(1-p_c(d))$.
The quantities $R_t,h_e(t),T,T_I$ retain their definitions, with
$e\in\{\pm e_1,\ldots,\pm e_d\}$.
For an auxiliary simple random walk $X$ of total rate $d$, put
\begin{equation}\label{eq:ad-definition}
 \zeta=\inf\{t\ge0:X_t\cdot e_1=-1\},
 \qquad
 a_d=\frac12\Ee_0\int_0^\zeta\one_{\{X_t=0\}}\,dt.
\end{equation}
This auxiliary clock does not change the rate-one convention for PMDLA.

\begin{corollary}
\label{cor:dimensions}
For rate-one PMDLA on $\Z^d$, $d\ge2$, the following hold.
\begin{enumerate}
\item If $0<\lambda<\lambda_c(d)$, then $T=\infty$ almost surely and
$\limsup_{t\to\infty}R_t/t\le C_{d,\lambda}$ almost surely for a
deterministic $C_{d,\lambda}<\infty$.
\item If $a_d<\lambda<\lambda_c(d)$, there is a deterministic
$v_{d,\lambda}>0$ such that $\liminf_{t\to\infty}h_e(t)/t\ge v_{d,\lambda}$
almost surely for every $e\in\{\pm e_1,\ldots,\pm e_d\}$.
\item If $\lambda>\lambda_c(d)$, then $T=T_I<\infty$ almost surely.
\end{enumerate}
The constants satisfy $a_d<\lambda_c(d)$ for every $d\ge2$ and
$a_2<2/3$.
\end{corollary}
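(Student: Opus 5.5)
The plan is to prove Corollary \ref{cor:dimensions} by rerunning the planar proofs of Theorems \ref{thm:upper}, \ref{thm:linear} and \ref{thm:supercritical} on $\Z^d$, and to isolate the few places where planarity or the constant $2/3$ actually enters. Two ingredients are dimension-free. First, by Poisson thinning the initial occupied set $O_0$ is Bernoulli site percolation with density $p=1-e^{-\lambda}$, so $\lambda<\lambda_c(d)$ is equivalent to $p<p_c(d)$. Second, the conditional Poisson description of the free particles does not depend on the lattice. Only two planar facts are specific to $\Z^2$: that there is no infinite cluster at $p_c$, and the RSW arm estimates. Both are used only at criticality, which is excluded here.

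\emph{Part (1).} I would follow the proof of Theorem \ref{thm:upper}. In the subcritical phase on $\Z^d$, exponential decay of cluster radii holds (Menshikov, Aizenman--Barsky), i.e.\ $\Pp(\operatorname{rad}C(0)\ge n)\le e^{-c n}$. Each update adds the cluster of the arrival site $z$ in the current free configuration. That configuration is stochastically dominated by an independent Poisson field of intensity $\lambda$: removing particles only thins it, and the remaining particles are still Poisson given the past. The radius increment at each update is therefore dominated by i.i.d.\ variables with exponential tails. The number of updates by time $t$ is bounded by the number of particles that reach $\partial^o\Ecal$. A space-time counting argument controls this: only particles starting within distance $O(t)$ can matter before the front reaches them, by large deviations for random walk. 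Together these give the linear bound $R_t\le C_{d,\lambda}t$ by the same Borel--Cantelli scheme, and nonexplosion follows.

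\emph{Part (3).} I would copy Theorem \ref{thm:supercritical}. For $p>p_c(d)$ the infinite cluster exists and is unique, and a positive-density set of its sites lies at bounded distance from any finite aggregate. A free particle started at an occupied site adjacent to $\partial^o\Ecal$ enters the boundary in time $O(1)$ with positive probability. This gives a uniform positive rate of swallowing the infinite component, hence $T_I<\infty$. The proof that updates do not accumulate before $T_I$ uses only exponential tails of finite clusters in the slab and supercritical phase (Grimmett--Marstrand), so it also transfers.

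\emph{Part (2) and the constants; this is the main obstacle.} In the proof of Theorem \ref{thm:linear} the threshold $2/3$ arises as an upper bound on the expected local time spent at the front site before the walk is pushed behind it, i.e.\ on $a_2$. In $d$ dimensions the analogous front-advancement argument compares the supply of mass arriving at the front hyperplane with the quantity $a_d$ in \eqref{eq:ad-definition}. The rate convention is the subtle point: the auxiliary walk has total rate $d$ so that each coordinate moves at rate one, which is where the factor $\tfrac12$ comes from. I would redo that comparison and check that mass density $\lambda>a_d$ gives a positive drift of $h_{e_1}$; symmetry then handles every $e$.

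It then remains to prove $a_d<\lambda_c(d)$ and $a_2<2/3$. Projecting onto $e_1$, the first coordinate is a rate-one lazy walk, and $a_d$ equals $\tfrac12$ times the expected time at the origin before the projected walk hits $-1$. This is at most $\tfrac12$ times the expected time the one-dimensional projection spends at $0$ before hitting $-1$, which is an explicit constant of order one. For $d=2$ a direct computation via the transverse return probabilities should give a strict bound below $2/3$.

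For $a_d<\lambda_c(d)$, I would use the bound $p_c(d)\ge 1/(2d-1)$, so that $\lambda_c(d)\ge -\log(1-1/(2d-1))$, and show that $a_d$ decays at least like $c/d$. Escape in the transverse directions before returning makes the time at the origin $O(1/d)$. Comparing the constants for small $d$ explicitly, with $d=2,3$ done numerically using rigorous bounds on $p_c$, is the step I expect to require the most care.
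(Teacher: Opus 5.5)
Your route to $a_d<\lambda_c(d)$ cannot work as stated. The walk spends its whole first $\operatorname{Exp}(d)$ holding time at $0$ before $\zeta$, so $a_d\ge\frac1{2d}$ in every dimension. The bound $p_c(d)\ge\frac1{2d-1}$ gives only $\lambda_c(d)\ge\log\frac{2d-1}{2d-2}=\frac1{2d}+\frac3{8d^2}+O(d^{-3})$. The leading terms coincide, so showing that $a_d$ ``decays like $c/d$'' proves nothing. You need $a_d\le\frac1{2d}+\frac{c'}{d^2}$ with $c'<\frac38$ and explicit error terms, and then a separate check in every dimension where those errors are not yet small. Your projection bound only gives $a_d\le1$, far above $\lambda_c(d)$ for large $d$.

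The paper writes $a_d=\int_0^\infty\phi^{d-1}\psi$ with $\phi(t)=e^{-t}I_0(t)$ and $\psi(t)=e^{-t}I_1(t)/t$. It proves $a_d<\frac1{2d}+\frac1{4d^2}+\frac{128}{81d^3}+\frac23\,2^{2-d}<\frac1{2d}+\frac3{8d^2}$ for $d\ge15$, and certifies $3\le d\le14$ by rational Riemann-sum bounds.

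Your $d=2$ idea matches the paper's, but it rests on the identity $\sum_{v\in\Pi}\Pp_v(H_{\{0\}}<\zeta)=1/a_d$. This follows from $G_d^+(v,0)=\Pp_v(H_{\{0\}}<\zeta)G_d^+(0,0)$, the symmetry of $G_d^+$, and the fact that the first coordinate spends expected time $2$ at $0$ before hitting $-1$. That identity is also the missing content of part (2), where you only say you would ``redo the comparison''. Fix any $\rho<1/a_d$. The identity lets finitely many entry points $U+v$, on the hyperplane through the rightmost boundary site $U$, force height increases at rate $Q\gamma$ with $Q>\rho$ whenever $S_t\ge\gamma$. After that, Lemmas \ref{lem:boost}--\ref{lem:repetition} go through for $\lambda>a_d$.

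Parts (3) and (1) also have genuine gaps. In (3), the free field at a late time is not supercritical Bernoulli percolation near the aggregate. By Lemma \ref{lem:density} its means $u_t$ are depleted along the entire past of $K$, so there is no uniform rate of touching an infinite occupied cluster. The paper gets density close to $\lambda$ at bounded distance ahead only on good histories (Lemmas \ref{lem:good} and \ref{lem:profile-density}, extended by Lemma \ref{lem:excursions}). This requires $\lambda>a_d$, so $a_d<\lambda_c(d)$ is needed for part (3) too. It then uses particles that do not jump during $(s,s+h]$, half-space percolation $\theta_{H,d}(p_M)>0$, and Lemma \ref{lem:repetition}.

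The planar proof of $T=T_I$ does not transfer either. It traps the aggregate in a finite component of $\Z^2\setminus\mathcal I_a$. For $d\ge3$ the complement of the infinite cluster can have infinite components: for $p$ slightly above $p_c(d)<1/2$ the unoccupied sites percolate too. The paper replaces it by Lemma \ref{lem:no-accumulation-d}. On $[0,t_0]$ the density ahead of the current supporting hyperplane is bounded below with no good-history assumption. Hence an entrance whose component reaches that hyperplane is infinite with conditional probability at least $\varepsilon(t_0)>0$. The supermartingale $q^{-N_n}\one_{A_n}$ then bounds the probability of $m$ finite advances in a given direction by $q^m$.

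In (1), dominating each radius increment gives only $R_t\le\sum_{i\le N_t}(D_i+1)$. This is not $O(t)$: updates occur all along $\partial^o\Ecal_t$, and the only a priori bound on their number $N_t$ is exponential in $t$. The claim that only particles within distance $O(t)$ matter presupposes the bound you want. The paper instead shows that $V_\theta(\Ecal_t)=\sum_{x\in\Ecal_t}e^{\theta\|x\|_1}$ has drift at most $b_\theta V_\theta$. This uses entrance rate at most $d\lambda$ per boundary site and mean increment at most $m_\theta e^{\theta\|z\|_1}$ at $z$. It stops at $|\Ecal_t|\ge n$ and applies Gronwall. Taking $\theta=0$ excludes accumulation, and $\theta>0$ gives $\Ee e^{\theta R_t}\le m_\theta e^{b_\theta t}$, hence the linear bound by Borel--Cantelli.
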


Thus the interval in part (2) is nonempty in every dimension, and
the sufficient planar density is strictly smaller than $2/3$.
\subsection{Open problems}
\begin{enumerate}
    \item First we conjecture a similar behaviour at criticality for all higher dimensions: For all $d>2$, and $\lambda=\lambda_c(d)$, $T<\infty$ and $T_I=\infty$ a.s.
    \item Second, condition the PMDLA to not explode at $\lambda=\lambda_c$ for all $t<t_0$. Take $t_0\to\infty$. What is the growth rate of the conditioned model?
    
    \item Next we restate the famous conjecture for the case of PMDLA: For all $d\ge2$ and $\lambda>0$ there is a deterministic $v_\lambda>0$ such that
\begin{equation}
 \liminf_{t\to\infty}\frac{H_t}t\ge v_\lambda
 \quad\text{almost surely}.
\end{equation}

\end{enumerate}
\subsection{Related work and the role of cluster geometry}\label{sec:related}
There are various aggregation processes that are studies at the heart of probability theory and statistical mechanics. 

Diffusion-limited aggregation (DLA), introduced by Witten and Sander
\cite{WS}, grows by adding one site at a time according to harmonic
measure from infinity. On $\Z^2$, Kesten \cite{Kesten} proved that the
radius of the DLA cluster with $n$ occupied sites is $O(n^{2/3})$ almost surely. Similar results are proven on Stationary DLA, a translation invariant version of DLA \cite{ mu2022scaling, procaccia2020stationary, procaccia2019stationary}. More recently, exact growth bounds, fractal dimension and fluctuations were proven for an off-lattice aggregate grwoing according to a continous harmonic measure, called Stationary Hastings-Levitov \cite{berger2025logarithmic, berger2022growth, chen2025one}. There, it is proved that trees in the aggregate of height $t$, have approximately $t^{3/2}$ particles. 

The Cluster-Cluster model lets entire clusters move at size-dependent
rates and merge when they meet. Berger, Procaccia and Sharon
\cite{BPS} studied its one-dimensional growth, while Berger,
Procaccia, Schmid and Sharon \cite{BPSS} studied its construction
and explosion in higher dimensions. PMDLA also absorbs whole
components, but its free particles move independently: an occupied
component can change before meeting the distinguished aggregate,
rather than moving as a rigid cluster. For the Cluster-Cluster model, the relationship with percolation behaviour is not yet understood in dimensions greater than one. 

This distinction makes cluster geometry especially important. In one dimension, every finite connected cluster is an interval, so its mass essentially determines its spatial extent. In higher dimensions, clusters of the same mass may have very different diameters, boundary sizes, and degrees of ramification. In the present paper, this geometric information is crucial: our proof of blow-up at $\lambda=\lambda_c$ relies on structural properties of critical percolation clusters, rather than on cluster-size estimates alone. Thus, criticality enters not only through the distribution of cluster masses, but also through the clusters' multiscale spatial structure and connectivity. This distinction is evident by the comparison to the Pool model \cite{Pool}, where there is no blowup at criticality (see description in the beginning of Section \ref{sec:intro}).

\subsection{Sketch of the proof ideas}

In this section, we focus on the case of $d=2$ as the key ideas are similar for higher dimensions. 

\emph{Conditional density and the upper bound.}
Exploring only the swallowed components and their vacant boundaries
leaves a conditional Poisson free field. Its intensity is at most
$\lambda$ and equals $\lambda$ times the probability that a reversed
walk avoids the past buffered aggregate. If $\lambda<\lambda_c$,
each engulfment is dominated by a fixed number of subcritical clusters.
Their exponential size tails control the drift of
$\sum_{x\in\Ecal_t}e^{\theta\|x\|_1}$ for small $\theta>0$.
At $\theta=0$ this excludes finite-time accumulation; at $\theta>0$
it gives the linear upper bound.

\emph{Forward growth.}
Following Sly's conforming-particle method \cite{sly2020one}, we use paths
that stay ahead of the aggregate and therefore remain unexamined.
A direct entrance and two short sideways routes advance the rightmost
point. The sideways routes increase the effective small-rate arrival
factor from $1$ to $3/2$. Combining this gain with a random-walk
barrier estimate amplifies a weak growth history whenever
$(3/2)\lambda>1$. Repeating successful advances, with control of the
time needed to recover after failures, gives the linear lower bound.
The same histories imply that the conditional density approaches
$\lambda$ faster than any power of the distance ahead of the aggregate.

\emph{Supercritical explosion.}
From such a history, particles that remain stationary for a short time
form a supercritical occupied field sufficiently far ahead. A finite
occupied connection and one prescribed jump attach its infinite
cluster with a uniform positive probability. Conditional repetition
then gives almost-sure explosion. To exclude an earlier accumulation
of finite updates, complete the stationary field to homogeneous
supercritical percolation by adding finitely many particles. Lemma
\ref{lem:percolation} traps the aggregate in a finite complementary
component until it meets the infinite cluster.

\emph{Critical explosion.}
At $\lambda_c$, each finite-index engulfment is finite.
A planar circuit argument nevertheless supplies a polynomial lower
bound on the sum of probabilities of long occupied connections near
the forward part of a large aggregate. Outside a buffer of width
$O(\log V/\log\log V)$, where $V$ is its forward extension from a
fixed starting line, the conditional field is close enough to critical
percolation to retain this bound. Crossing the buffer costs only
$V^{-o(1)}$. During a fixed trial, advances multiplying $V$ by a fixed
factor have rate at least $cV^\eta$ for some $\eta>0$. The expected
times between geometric levels, stopped at the trial's endpoint, are
summable. A finite forcing event then gives a uniform positive chance
of accumulation within a fixed time; conditional repetition makes
this almost sure.

\subsection{Paper organization} Section \ref{sec:preliminaries} gives the conditional Poisson and percolation
estimates used throughout. Theorem \ref{thm:upper} is proved in
Section \ref{sec:subcritical}. Section \ref{sec:linear} develops the
forward-growth estimates and proves Theorem \ref{thm:linear}.
Sections \ref{sec:supercritical} and \ref{sec:critical} prove
Theorems \ref{thm:supercritical} and \ref{thm:critical}, respectively.
Section \ref{sec:dimensions} proves Corollary \ref{cor:dimensions}.
\section{Preliminaries}\label{sec:preliminaries}

Throughout the proofs, planar walks have total rate $2$, so their first
coordinate has rate $1$. We denote $\Pp_x$ and $\Ee_x$ as the distribution and expectation with respect to the independent walk started at $x$.
Denote by $c,C$ positive constants whose values may change.
Dependencies that require uniformity are stated explicitly.

\subsection{Kurtz density estimates}

Recall that $K_t=\Ecal_t\cup\partial^o\Ecal_t$ and that the free
particle count $\eta_t^f(x)$ is taken after any engulfment at time $t$.
Let $\Gcal_t$ be the completed, right-continuous filtration generated
by the aggregate history, with explosion recorded when it occurs.
It records the update times and the sets added, but need not identify
the particle or edge that caused an update.
We use the rate-two clock throughout this subsection.

The conditional Poisson principle below is attributed to Kurtz in
\cite[Section 1]{KS}; see also \cite[Theorem 1 and Corollary 2.1]{Pool}.
We prove the additional assertion needed for PMDLA: revealing an entire
occupied component and its vacant boundary preserves the Poisson law
outside the enlarged set $K_t$.

\begin{lemma}\label{lem:density}
Suppose the initial particle counts are independent Poisson variables
with mean $\lambda$. On $\{t<T\}$, conditionally on $\Gcal_t$, the
variables $\eta_t^f(x)$, $x\in\mathbb Z^2$, are independent Poisson
with means $u_t(x)\in[0,\lambda]$. These means vanish on $K_t$ and satisfy
\begin{equation}\label{eq:historical}
 u_t(x)=\lambda\Pp_x\bigl(X_r\notin K_{t-r}
                         \text{ for every }0\le r\le t\bigr),
\end{equation}
where $X$ is an independent rate-two simple random walk and the
probability is evaluated with the aggregate history fixed. Moreover,
\begin{equation}\label{eq:deficit}
 \sum_{x\in\mathbb Z^2}(\lambda-u_t(x))
 \le\lambda|K_t|(1+2t)<\infty.
\end{equation}

For deterministic initial means $0\le\mu(x)\le\lambda$, the same
conditional Poisson law holds, with
$u_t(x)=\Ee_x[\mu(X_t);\ X_r\notin K_{t-r}\text{ for every }0\le r\le t]$.
If $D_0=\sum_x(\lambda-\mu(x))<\infty$, the bound in
\eqref{eq:deficit} becomes $\lambda|K_t|(1+2t)+D_0$.
These statements also hold at finite $\Gcal$-stopping times $S$, on
$\{S<T\}$, with the post-update field when $S$ is an update time.
\end{lemma}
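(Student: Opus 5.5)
We plan to prove Lemma \ref{lem:density} by induction over the update times $\tau_k$, using the thinning and transport properties of Poisson point processes; Kurtz's principle from \cite{KS,Pool} handles the one-site version. We label the particles: at time $0$ the configuration is a Poisson point process on $\Z^2\times D[0,\infty)$ with intensity $\lambda\sum_x\delta_x\otimes\Pp_x$, where $D[0,\infty)$ is the path space. The induction hypothesis at $\tau_k$ is this: conditionally on $\Gcal_{\tau_k}$, the free particles form a Poisson process on path space whose intensity is the initial intensity restricted to the paths that avoid $K_s$ at every time $s\le\tau_k$, i.e. $X_s\notin K_s$ for all $s\le\tau_k$. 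The walks are reversible, so projecting this intensity onto positions at time $t$ and reversing time gives \eqref{eq:historical}: the mean is $\sum_y\lambda\Pp_y(X_t=x,\ X_s\notin K_s\ \forall s\le t)=\lambda\Pp_x(X_r\notin K_{t-r}\ \forall r\le t)$. The same computation with initial weight $\mu(y)$ in place of $\lambda$ gives the formula with $\mu(X_t)$. Both means vanish on $K_t$ and are at most $\lambda$.

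\textbf{Step 1 (between updates).} On $[\tau_k,\tau_{k+1})$ the aggregate is fixed at $\Ecal_{\tau_k}$. The event $\{\tau_{k+1}>s\}$ means that no free path enters $\partial^o\Ecal_{\tau_k}$ by time $s$. Conditioning a Poisson process on having no points in a set of paths restricts its intensity to the complement. So on $\{\tau_{k+1}>s\}$ the free paths are Poisson with intensity restricted to the paths avoiding $K$ up to time $s$, and the hypothesis propagates to deterministic times between updates. The joint law of $\tau_{k+1}$ and the arrival site $z$ comes from the first point process of entrances. That process is Poisson, so conditioning on its first point leaves the other free paths Poisson: they are restricted to avoid $K$ up to $\tau_{k+1}-$, and the arriving particle is revealed separately.

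\textbf{Step 2 (the engulfment).} This is the new ingredient and, we expect, the main obstacle. At $\tau_{k+1}$ we reveal $C=C(z,O^-)$ together with its vacant outer boundary $\partial^oC\setminus\Ecal_{\tau_k}$. Conditionally, the counts at distinct sites are independent Poisson. The event $\{C(z,O^-)=A\}$ says that every site of $A$ is occupied and every site of $\partial^oA$ outside the old aggregate is vacant; the site $z$ is occupied because of the arriving particle. This event depends only on the counts in $A\cup\partial^oA$. The particles on $A$ are removed, so after the update the field on $\Z^2\setminus(A\cup\partial^oA)$ keeps its conditional Poisson law, and $\partial^oA$ carries zero particles. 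For this argument to be valid, the event must be measurable with respect to the sites inside $K_{\tau_{k+1}}$ only, so that no information leaks beyond $K_{\tau_{k+1}}\supseteq A\cup\partial^oA$. It also requires $\Gcal$ to record only the sets added and not which particle caused the update; otherwise conditioning would bias the paths of the other particles. On path space this step says the intensity is restricted to paths with $X_{\tau_{k+1}}\notin K_{\tau_{k+1}}$, which is exactly the induction hypothesis at $\tau_{k+1}$. If $\tau_{k+1}<T$ there are finitely many updates before any $t<T$, so induction gives the claim at all $t<T$. For a stopping time $S$ we discretize, $S_n\downarrow S$, and use right-continuity; alternatively we apply the strong Markov property of the path-space process directly.

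\textbf{Step 3 (the deficit \eqref{eq:deficit}).} We sum $\lambda-u_t(x)=\lambda\Pp_x(\exists r\le t:X_r\in K_{t-r})$ over $x$. Because $K_s\subseteq K_t$, this is at most $\lambda\sum_x\Pp_x(X \text{ visits } K_t \text{ by time } t)=\lambda\,\Ee\bigl|\bigcup_{r\le t}(K_t-X_r)\bigr|$. Each jump enlarges the set $\bigcup_{r\le t}(K_t-X_r)$ by at most $|K_t|$ new sites. The rate-two walk makes on average $2t$ jumps in time $t$, so the total is at most $\lambda|K_t|(1+2t)$. With initial deficit $D_0$, we bound $\lambda-u_t(x)\le\lambda\Pp_x(\text{hit})+\Ee_x[\lambda-\mu(X_t)]$. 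The last term sums to $D_0$ by reversibility, which gives the stated bound $\lambda|K_t|(1+2t)+D_0$.
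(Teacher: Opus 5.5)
Your argument is sound in outline, but it follows a different route from the paper.

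\textbf{The paper's route.} The paper does not induct over update times. It fixes $t$ and splits $[0,t]$ into $2^k$ steps. At each step it reveals the trajectories meeting the current buffered set, so an occupied component is uncovered one boundary layer at a time, and every conditioning is the restriction theorem applied at a deterministic grid time. It then shows that this discrete exploration coincides with the true process on $\{\sigma_N>t\}$ for large $k$, using the finite-visitors bound and the almost-sure distinctness of jump times. Finally it passes to the limit in the Laplace functional, uses a monotone-class argument to keep the conditioning, and lets $N\to\infty$.

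\textbf{What your route buys.} Your induction treats each update exactly, and it makes the new PMDLA ingredient transparent. The event $\{C(z,O^-)=A\}$ is measurable with respect to the counts on $A\cup\partial^oA\subseteq K_{\tau_{k+1}}$, so the restriction theorem leaves the rest of the field untouched.

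\textbf{What it costs.} The delicate step in your route is the entrance, not the engulfment. Conditioning on $(\tau_{k+1},z)$ is conditioning on a continuous random variable. The claim that the other free paths are then Poisson, restricted to first entrance after $\tau_{k+1}$, needs a Palm or Mecke computation. The paper uses exactly this fact, via Last--Penrose, in the remark on the entrance law; its discretized proof of the lemma never needs it. You also need two further ingredients:
\begin{itemize}
\item the structure of the natural filtration of a marked point process, namely that on $\{\tau_k\le t<\tau_{k+1}\}$ the $\sigma$-field $\Gcal_t$ is generated by $\Gcal_{\tau_k}$ and $\{\tau_{k+1}>t\}$;
\item a base case for the initial engulfment, which follows from the same measurability argument.
\end{itemize}

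\textbf{A correction to your side remark.} The reason $\Gcal$ need not identify the triggering particle is not bias. By Mecke's formula, conditioning on the triggering path leaves the other paths Poisson, with a law depending only on its entrance time. What matters is that the resulting intensity is a function of the aggregate history alone, so the tower property transfers the conditional law to the coarser $\Gcal$.

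Your time reversal and your deficit computation coincide with the paper's.
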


We follow the time discretization in \cite[Section~2]{Pool}.
At each step we reveal the particles visiting the current aggregate
or its outer boundary, and add the occupied boundary sites.
Successive steps therefore explore an occupied component one layer
at a time. The marking theorem gives the conditional Poisson law
at every step; we then pass to the original process.

\begin{proof}
We allow initial means $0\le\mu(x)\le\lambda$.
By the marking theorem \cite[Theorem~5.6]{LP}, the prescribed walk
trajectories form a Poisson process $\Pi$ with intensity
$\nu_\mu=\sum_x\mu(x)\Pp_x$.
Write $\mathcal N_t$ for the trajectories of the particles still
free at time $t$.

Fix $t>0$, put $m=2^k$, $h=t/m$, and $s_j=jh$.
On the same trajectories, define
$\Ecal^{(k)}_0=\{0\}$ and $\Pi^{(k)}_0=\Pi$.
Given these objects through step $j-1$, set
$K^{(k)}_{j-1}=\Ecal^{(k)}_{j-1}\cup
\partial^o\Ecal^{(k)}_{j-1}$ and
\begin{align*}
 B^{(k)}_j
 &=\{\omega:\omega([s_{j-1},s_j])\cap K^{(k)}_{j-1}
                 \ne\emptyset\},\\
 \mathfrak I^{(k)}_j
 &=\Pi^{(k)}_{j-1}|_{B^{(k)}_j},
 &\Pi^{(k)}_j&=\Pi^{(k)}_{j-1}-\mathfrak I^{(k)}_j,\\
 \Ecal^{(k)}_j
 &=\Ecal^{(k)}_{j-1}\cup
   \bigcup_{\omega\in\mathfrak I^{(k)}_j}
   \bigl(\omega([s_{j-1},s_j])\cap
                 \partial^o\Ecal^{(k)}_{j-1}\bigr).
\end{align*}
Here $\Pi|_B$ is the restriction to trajectories in $B$, and the
union is over its atoms. Every $\Ecal^{(k)}_j$ is finite and connected.
Particles on its newly exposed boundary are examined at the next step.
In particular, the initial engulfment is included in this construction.

Let $\mathcal H^{(k)}_j=
\sigma(\mathfrak I^{(k)}_1,\ldots,\mathfrak I^{(k)}_j)$ and
$D^{(k)}_j=\bigcap_{\ell=1}^j(B^{(k)}_\ell)^c$.
Conditionally on $\mathcal H^{(k)}_{j-1}$, the set $B^{(k)}_j$
is fixed. The restriction theorem \cite[Theorem~5.2]{LP} makes
the selected and remaining trajectories independent Poisson processes.
Induction gives
\[
 \mathcal L(\Pi^{(k)}_j\mid\mathcal H^{(k)}_j)
 =\operatorname{PPP}(\one_{D^{(k)}_j}\nu_\mu).
\]
The intensity depends only on the sets
$\Ecal^{(k)}_0,\ldots,\Ecal^{(k)}_{j-1}$.
The same law therefore holds conditionally on
$\mathcal G^{(k)}_t=\sigma(\Ecal^{(k)}_j:0\le j\le m)$.
For every bounded measurable $g\ge0$ supported on a set of finite
$\nu_\mu$-measure, this gives
\begin{equation}\label{eq:discrete-poisson}
 \Ee\bigl[e^{-\Pi^{(k)}_m(g)}\mid\mathcal G^{(k)}_t\bigr]
 =L^{(k)}_t(g):=
 \exp\left\{-\int(1-e^{-g(\omega)})
                  \one_{D^{(k)}_m}(\omega)\,\nu_\mu(d\omega)\right\}.
\end{equation}
Here for a point measure $\Pi=\sum_{i\in I}\delta_{\omega_i}$ and a
nonnegative measurable function $g$, write
$\Pi(g):=\int g\,d\Pi=\sum_{i\in I}g(\omega_i)$,
where atoms are counted with multiplicity.
Indeed, if $Z\sim\operatorname{Pois}(a)$ and $c\ge0$, then
$\Ee[e^{-cZ}]
=e^{-a}\sum_{n\ge0}(ae^{-c})^n/n!
=\exp\{-a(1-e^{-c})\}$.
Conditionally on $\mathcal G^{(k)}_t$, the exploration leaves
independent Poisson counts in disjoint measurable sets $B_j$, with
means $\nu_\mu(B_j\cap D^{(k)}_m)$.
The preceding identity therefore proves \eqref{eq:discrete-poisson}
for $g=c\one_B$ with $\nu_\mu(B)<\infty$.
Independence then gives the formula for nonnegative simple functions
supported on sets of finite $\nu_\mu$-measure.

For an arbitrary nonnegative measurable $g$, choose such simple
functions $g_n\uparrow g$, using the $\sigma$-finiteness of $\nu_\mu$.
Then $\Pi^{(k)}_m(g_n)\uparrow\Pi^{(k)}_m(g)$.
Conditional bounded convergence on the left and monotone convergence
in the integral on the right give \eqref{eq:discrete-poisson}.

When taking the limit $h\to 0$, we justify the conditional Poisson field by stopping at finite aggregate sizes.
For a finite set $F$, write $H_F=\inf\{r\ge0:X_r\in F\}$.
Translation invariance and the union bound give
\begin{equation}\label{eq:finite-visitors}
 \nu_\mu(H_F\le t)
 \le\lambda\sum_x\Pp_x(H_F\le t)
 \le\lambda|F|\Ee_0|\{X_r:0\le r\le t\}|
 \le\lambda|F|(1+2t).
\end{equation}
Thus only finitely many prescribed trajectories visit a finite box
by time $t$, and they make finitely many jumps. Their jump times are
distinct almost surely. These facts hold simultaneously for integer
boxes and integer time horizons. They also show directly that, after a finite
engulfment, the vacant boundary has a strictly positive waiting time
before the next entrance.

Let $\sigma_N$ be the first update at which the original aggregate $\Ecal_t$ has at least $N$ vertices, including an initial or infinite engulfment.
Set $\sigma_N=\infty$ if there is no such update.
Until this size is reached, the aggregate and its boundary lie in
the deterministic box $\Lambda_{N+1}=[-N-1,N+1]^2\cap\mathbb Z^2$.
There are only finitely many relevant trajectories and jump times in
this box. Between two such jumps, the discrete construction adds
one occupied boundary layer at each step. Within $N+2$ steps it
either completes the finite engulfment and examines its boundary,
or reaches size $N$.
For all sufficiently large $k$, these steps fit between consecutive
relevant jumps. Consequently, 
\begin{align*}
 \one_{\{|\Ecal^{(k)}_m|<N\}}
 &\longrightarrow\one_{\{\sigma_N>t\}},\\
 \Ecal^{(k)}_{\lfloor q/h\rfloor}&=\Ecal_q
 \quad\text{eventually on }\{\sigma_N>t\},
 \quad\text{for each fixed }0<q\le t,\\
 \Pi^{(k)}_m&=\mathcal N_t
 \quad\text{eventually on }\{\sigma_N>t\}.
\end{align*}
These assertions hold almost surely: no particle jumps at a prescribed
positive time. If an engulfment reaches size $N$, only its first
$N$ vertices are needed for the first assertion, even when the whole
component is infinite.

On $\{\sigma_N>t\}$, the same finite-layer argument shows that,
for $\nu_\mu$-almost every test trajectory,
\[
 \one_{D^{(k)}_m}(\omega)
 \longrightarrow
 \one_{\{\omega(r)\notin K_r\text{ for every }0\le r\le t\}}.
\]
Indeed, a test walk has finitely many jumps on $[0,t]$ and no jump
at any of the finitely many aggregate update times, with the history
fixed. A visit to a newly enlarged $K_r$ is therefore detected once
the grid is sufficiently fine. Dominated convergence yields
$L^{(k)}_t(g)\to L_t(g)$, where
\[
 L_t(g)=\exp\left\{-\int(1-e^{-g(\omega)})
  \one_{\{\omega(r)\notin K_r,\ 0\le r\le t\}}
  \,\nu_\mu(d\omega)\right\}.
\]

To retain the conditioning in the limit, let $\Phi$ be any bounded
function of finitely many $\Ecal_q$, $0<q\le t$, and let $\Phi_k$
be the same function of $\Ecal^{(k)}_q$.
Multiply \eqref{eq:discrete-poisson} by
$\one_{\{|\Ecal^{(k)}_m|<N\}}\Phi_k$ and take expectations.
Bounded convergence gives
\[
 \Ee[\one_{\{\sigma_N>t\}}\Phi e^{-\mathcal N_t(g)}]
 =\Ee[\one_{\{\sigma_N>t\}}\Phi L_t(g)].
\]
Such functions generate the aggregate history: its value at $0$
is recovered by right continuity. A monotone-class argument extends
the identity to all bounded functions of that history.
Finally, $\{\sigma_N>t\}\uparrow\{t<T\}$, because every update
adds a vertex. Letting $N\to\infty$ proves the conditional trajectory
law for the natural aggregate filtration.

For finite $F$, $\nu_\mu(\omega(t)\in F)\le\lambda|F|$.
Mapping trajectories to their positions at time $t$
\cite[Theorem~5.1]{LP} therefore gives independent Poisson counts
with means
\[
 u_t(x)=\sum_a\mu(a)\Pp_a\bigl(X_t=x,\ X_r\notin K_r,
                                      \ 0\le r\le t\bigr)
 =\Ee_x\bigl[\mu(X_t);\ X_r\notin K_{t-r},\ 0\le r\le t\bigr].
\]
The second equality is time reversal of the symmetric walk, with
the aggregate history fixed. It proves \eqref{eq:historical},
the inhomogeneous formula, $0\le u_t\le\lambda$, and
$u_t=0$ on $K_t$. The case $t=0$ follows by the same layer
exploration with the particles fixed at their initial sites.

The counts and the means are right-continuous before $T$.
For the means this follows from the displayed formula: between
updates they evolve by the walk kernel killed on the fixed set $K$.
The surviving trajectory measure is constant between updates.
Testing the identities at times $t_n\downarrow t$ against events
in $\bigcap_n\sigma(\Ecal_r:r\le t_n)$ and using bounded convergence
therefore extends them to the completed, right-continuous filtration
$\Gcal_t$.

For a finite $\Gcal$-stopping time $S$, put
$S_n=2^{-n}\lceil2^nS\rceil$.
If $A\in\Gcal_S$ and $f\ge0$ has finite support, summing the
deterministic-time identity over the possible values of $S_n$ gives
\[
 \Ee\left[\one_A\one_{\{S_n<T\}}
       e^{-\sum_x f(x)\eta^f_{S_n}(x)}\right]
 =\Ee\left[\one_A\one_{\{S_n<T\}}
       e^{-\sum_xu_{S_n}(x)(1-e^{-f(x)})}\right].
\]
Since $S_n\downarrow S$, bounded convergence proves the same identity
at $S$ on $\{S<T\}$. Applying this argument to the trajectory
Laplace functional gives the conditional Poisson law on trajectory
space at $S$ as well.
The field is the post-update field by right continuity.

Finally, $K_r\subset K_t$ for $r\le t$, so
\[
 \lambda-u_t(x)
 \le\lambda\Pp_x(H_{K_t}\le t)
       +\Ee_x[\lambda-\mu(X_t)].
\]
Sum over $x$, apply \eqref{eq:finite-visitors} with $F=K_t$,
and use $\sum_x\Ee_x[\lambda-\mu(X_t)]=D_0$.
This proves the deficit bounds, including at stopping times.
\end{proof}

\begin{remark}[Entrance law and holding times]
The preceding proof establishes the Poisson law after each finite
update, without assuming the law at the next entrance.
To obtain that law, condition at an update time $\sigma<T$ and
mark each remaining trajectory by its first subsequent entrance time
into the fixed set $K_\sigma$, together with its directed entrance
edge. The resulting marked process is Poisson. Given the first
entrance time and edge, removing its trajectory leaves a Poisson
process restricted to later entrances
\cite[Theorem~7.4]{LP}; trajectories with no entrance form an
independent restriction. Thus the other particles, immediately
before that jump, have independent Poisson counts with means
$u_{t-}(x)$. Immediately after the jump and before the component
is revealed, there is one additional particle at the arrival site.

Since each directed nearest-neighbor jump has rate $1/2$, the
predictable entrance intensity at $w\in\partial^o\Ecal_{t-}$ is
$r_t(w)=\frac12\sum_{y\sim w}u_{t-}(y)\le2\lambda$.
After a finite update, the boundary is vacant; hence
\[
 \Pp(\tau_{k+1}-\tau_k\le h\mid\Gcal_{\tau_k})
 \le2\lambda|\partial^o\Ecal_{\tau_k}|h.
\]
This implies $\tau_{k+1}>\tau_k$ almost surely, but does not
exclude accumulation of infinitely many updates.
\end{remark}

We also use independent marking and thinning
\cite[Theorems 5.6 and 5.8]{LP}.
Writing $\nu_\mu=\sum_x\mu(x)\Pp_x$, for disjoint deterministic
measurable sets $B_1,\ldots,B_m$
of trajectories with $\nu_\mu(B_j)<\infty$, the numbers of initial
walk trajectories in these sets are independent Poisson variables
with means $\nu_\mu(B_j)$.
The sets of trajectories and any additional conditioning will be
specified whenever this fact is used.

\subsection{Planar percolation estimates}

The matching lattice (or $*$-connected lattice) of $\Z^2$ joins sites at $\ell^\infty$ distance one.
An open path always means a nearest-neighbor path; a closed matching
path is a closed path on the matching lattice. Half-planes below have boundaries
parallel to a coordinate axis. Write $\Pp_p$ for independent site
percolation with open probability $p$, and $\Ee_p$ for its expectation.
More generally, $\Pp_q$ allows site-dependent open probabilities $q(x)$.
In this setting $C(0)$ denotes the open component of the origin,
empty when the origin is closed.
Product measures satisfy Harris's inequality: increasing events are
positively correlated, and an increasing and a decreasing event are
negatively correlated \cite[Section 2.2]{Grimmett}.

The following lemma states facts on site percolation used in this paper.
\begin{lemma}\label{lem:percolation}
The following statements hold on $\Z^2$.
\begin{enumerate}
\item If $p<p_c$, the size of $C(0)$ has an exponential tail.
\item At $p_c$, all open components are almost surely finite. Open circuits and
closed matching circuits in annuli of fixed aspect ratio have
probabilities bounded away from $0$. For some $C,\delta>0$, a closed matching
arm from a fixed-size box to distance $r$ has probability at most
$Cr^{-\delta}$.
\item If $p>p_c$, a prescribed boundary vertex of a half-plane belongs
to an infinite open cluster within that half-plane with probability
$\theta_H(p)>0$. In the full plane there is almost surely a unique
infinite open cluster $\mathcal I$, and every nearest-neighbor component
of $\Z^2\setminus\mathcal I$ is finite. Uniqueness and finiteness of
the complementary components survive every finite modification.
\end{enumerate}
Also $p_c\ge1/2$, and hence $\lambda_c\ge\log2>2/3$.
\end{lemma}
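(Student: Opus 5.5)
This lemma collects classical facts, so the plan is to assemble each item from the literature rather than prove it from scratch. The ambient setting throughout is independent site percolation on $\Z^2$ under $\Pp_p$.

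\emph{Item (1).} This is the Menshikov/Aizenman--Barsky sharpness theorem: for $p<p_c$ there is exponential decay of the radius of $C(0)$. Exponential tails of the volume $|C(0)|$ then follow from the Kesten subadditivity argument, or directly from Duminil-Copin--Tassion; see \cite[Chapters 5--6]{Grimmett}.

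\emph{Item (2).} At $p_c$, absence of an infinite cluster in two dimensions is due to Russo and Kesten \cite{Russo,Nolin}. It also follows from the RSW theory: by self-matching duality between the site lattice and the matching lattice, crossing probabilities of rectangles of fixed aspect ratio are bounded away from $0$ and $1$ for both open and closed matching crossings. Gluing crossings of four rectangles with Harris's inequality produces circuits in annuli $\Lambda_{2r}\setminus\Lambda_r$ with probability at least $c>0$, uniformly in $r$; this gives the circuit claim. Independence over the disjoint dyadic annuli $\Lambda_{2^{k+1}}\setminus\Lambda_{2^k}$ then bounds the probability of a closed matching arm from a fixed box to distance $r$. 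Such an arm must avoid an open circuit in each of the $\sim\log_2 r$ annuli, so its probability is at most $(1-c)^{\log_2 r-C}\le Cr^{-\delta}$. Infinitely many open circuits around the origin almost surely also reprove finiteness of all clusters.

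\emph{Item (3).} For $p>p_c$, uniqueness of $\mathcal I$ is Burton--Keane or Aizenman--Kesten--Newman. For finiteness of the components of $\Z^2\setminus\mathcal I$, one first uses that the closed matching configuration is subcritical for matching-lattice percolation, since $1-p<1-p_c=p_c^*$ by duality. Hence closed matching clusters have exponential tails. A further dyadic-annulus argument, combined with the finite-energy (insertion) property and Borel--Cantelli, shows that almost surely every large annulus contains an open circuit belonging to $\mathcal I$. These circuits separate every bounded region, so every complementary component is finite. The half-plane statement $\theta_H(p)>0$ follows from percolation in half-planes above $p_c$ (Barsky--Grimmett--Newman, or by translating the full-plane cluster and using positive probability in a sector). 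A finite modification changes only a bounded region. All large circuits therefore remain, uniqueness persists because finitely many sites cannot create a new infinite cluster disjoint from these circuits, and complementary components stay finite.

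\emph{Remaining claims.} The bound $p_c\ge1/2$ follows from $p_c^{\rm site}+p_c^{\rm site,*}=1$ together with $p_c^{*}\le p_c$, since the matching lattice has more edges. Then $\lambda_c=-\log(1-p_c)\ge\log 2\approx0.693>2/3$.

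The main obstacle is the robustness of item (3) under finite modification, together with the finiteness of complementary components. The cleanest route is to reduce both to the existence of arbitrarily large open circuits in $\mathcal I$, which is exactly what the annulus argument supplies.
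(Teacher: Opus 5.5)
Your proposal is correct in substance. Like the paper, you assemble the lemma from classical results. Items (1), (2) and the bound $p_c\ge1/2$ go essentially as in the paper: sharpness for subcritical tails, circuits in disjoint annuli for the arm bound, and Russo's relation $p_c+p_c^*=1$.

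Fix two slips in item (2).
\begin{enumerate}
\item Your remark that infinitely many open circuits around the origin ``reprove finiteness of all clusters'' uses the wrong kind of circuit. Open circuits also occur at all scales for $p>p_c$, where an infinite cluster exists. What rules out an infinite open cluster at $p_c$ is a closed matching circuit at arbitrarily large scales. You get these from the uniform lower bound on closed matching circuits together with independence over disjoint annuli; this is the paper's formulation.
\item The square site lattice is not self-matching, since its matching lattice has the diagonal edges. So the crossing bounds at $p_c$ are not a pure symmetry consequence. They come from the Russo--Kesten theory for the matching pair, which is what the citation of Nolin, Section~8.1, provides.
\end{enumerate}

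In item (3) you take a genuinely different route at two points.

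\emph{Large open circuits.} You obtain them from $1-p<p_c^*$, sharpness on the matching lattice, and Borel--Cantelli over dyadic annuli; the paper instead quotes supercritical crossing estimates. In both cases, a circuit surrounding some $v\in\mathcal I$ meets an infinite open path from $v$ and hence lies in $\mathcal I$. This intersection step, not finite energy, is what places the circuits in $\mathcal I$.

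\emph{Finite modifications.} You argue deterministically. The paper notes that for $p<1$, prescribing finitely many states gives a law absolutely continuous with respect to $\Pp_p$, and then takes a countable union over finite sets and assignments. Your argument needs one more sentence. Take an infinite open self-avoiding path from $v$, and cut it after its last visit to a box containing the modified set. The remaining tail stays open and meets every larger circuit, so these circuits remain in one infinite cluster. Uniqueness and finiteness of the complementary components then follow as you say. Your version buys an explicit full-measure event on which all finite modifications work at once, including the random ones in the proof of Theorem~\ref{thm:supercritical}. The paper's version is shorter and transfers existence, uniqueness and finiteness together without any geometry.

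\emph{Half-plane.} For $\theta_H(p)>0$, the paper derives $p_c(H)=p_c$ from a Peierls bound and Grimmett--Marstrand's Theorem~A. It then links the prescribed boundary vertex to a percolating vertex by a finite open path. Your ``sector'' alternative is not an argument as stated. Even after citing half-plane percolation, you still need that finite open connection, via Harris's inequality, to handle a prescribed vertex.
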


\begin{proof}
Part (1) is \cite[Theorem 3]{AV}.
Part (2) consists of standard consequences of the critical
Russo--Seymour--Welsh estimates; see
\cite[Theorem 2 and Section 8.1]{Nolin} for these estimates and
their extension to square-site percolation and its matching lattice.
Indeed, open circuits in disjoint annuli give the polynomial bound
on closed matching arms, while closed matching circuits at
arbitrarily large scales exclude infinite open components.

For the half-plane $H=\{x\in\Z^2:x_1\ge0\}$, the usual Peierls
argument gives $p_c(H)<1$; see \cite{Grimmett}.
Apply \cite[Theorem A]{GM} with $F=H$.
Since $2kH+([-k,k]^2\cap\Z^2)$ is a translate of $H$, that theorem
gives $p_c(H)\le p_c+\varepsilon$ for every $\varepsilon>0$.
The reverse inequality follows from $H\subset\Z^2$.
Thus $p_c(H)=p_c$.
For $p>p_c$, any prescribed vertex of $H$ has positive probability
of belonging to an infinite open component: a finite open path
connects it to a vertex with positive percolation probability.

Existence of an infinite full-plane cluster follows from $p>p_c$,
and uniqueness is \cite[Theorem 2]{BK}.
The supercritical planar crossing estimates imply that almost surely
there are open circuits surrounding boxes of arbitrarily large
radius; see \cite[Lemma 37 and Section 8.1]{Nolin}.
Fix a vertex $v$ of the infinite cluster $\mathcal I$.
Every such circuit enclosing $v$ meets an infinite open path from
$v$, and therefore belongs to $\mathcal I$.
Consequently, every vertex outside $\mathcal I$ is surrounded by a
circuit in $\mathcal I$. Its nearest-neighbor component in
$\Z^2\setminus\mathcal I$ is confined inside that circuit and is finite.

For $p<1$, prescribing the states of finitely many sites gives a
probability law absolutely continuous with respect to $\Pp_p$.
The preceding almost-sure properties therefore remain valid after
each fixed finite modification. There are only countably many
finite sets and assignments of their states, so the properties
hold simultaneously after every finite modification.
The case $p=1$ is immediate.

Finally, Russo's matching relation gives $p_c+p_c^*=1$
\cite{Russo}, where $p_c^*$ is the matching-lattice threshold.
Since the matching lattice contains the square lattice,
$p_c^*\le p_c$. Hence $p_c\ge1/2$ and
$\lambda_c=-\log(1-p_c)\ge\log2>2/3$.
\end{proof}

\section{Subcritical nonexplosion and a linear upper bound}\label{sec:subcritical}

For $\lambda<\lambda_c$, subcritical percolation controls the size of each engulfment.
An exponential spatial summation then gives a linear upper bound on the radius.

\begin{proof}[Proof of Theorem \ref{thm:upper}]
Put
$p=1-e^{-\lambda}<p_c$.
Write $\Ee_p$ for expectation under Bernoulli site percolation of
parameter $p$, and let $\omega$ denote its set of open sites.
By Lemma \ref{lem:percolation}, there is $\theta_0>0$ such that
$\chi_\theta=\Ee_p\left[\sum_{x\in C(0,\omega)}e^{\theta\|x\|_1}\right]<\infty$
for all $0\le\theta\le\theta_0$.
Indeed, $\|x\|_1\le |C(0,\omega)|-1$ for $x\in C(0,\omega)$,
and $|C(0,\omega)|$ has an exponential tail.
As usual, $C(0,\omega)=\emptyset$ if $0\notin\omega$.
Fix $0\le\theta\le\theta_0$ and set
$V_\theta(F)=\sum_{x\in F}e^{\theta\|x\|_1}$ and
$m_\theta=1+4e^\theta\chi_\theta$. Conceptually we will show that for some constant $b_\theta$>0, $dV_\theta/dt\le b_\theta V_\theta$, yielding an explonential bound.

We first bound the increase of $V_\theta$ at an entrance. 
Before $T$, consider a possible entrance $y\to z$ at time $s>0$,
with $\|y-z\|_1=1$ and $z\in\partial^o\Ecal_{s-}$.
Conditionally on $\Gcal_{s-}$, let $N=(N(x))_{x\in\Z^2}$ have
independent Poisson counts with the predictable means
$u_{s-}(x)\le\lambda$ from Lemma \ref{lem:density}, which is the conditional distribution of free particles at time $s-$.
The entrance rate in configuration $N$ is $N(y)/2$. Thus by the Poisson identity \eqref{eq:poissonidentity},
for every nonnegative function $h$ of the count configuration
\[
\Ee[N(y)h(N)\mid\Gcal_{s-}]
=u_{s-}(y)\Ee[h(N+\delta_y)\mid\Gcal_{s-}].
\]
Here $\delta_y(x)=\mathbf 1_{\{x=y\}}$, so $N+\delta_y$ adds one
particle at $y$.
Thus, given the entrance time and directed edge, the configuration
after the jump and before the component search consists of independent
Poisson counts with means $u_{s-}(x)$, together with one additional
particle at $z$.

The occupied sites of the Poisson counts are independent, with
probabilities $1-e^{-u_{s-}(x)}\le p$. Couple this occupied set
as a subset of $\omega$.
The component formed by adding the particle at $z$ is contained in
$\{z\}\cup\bigcup_{w\sim z}C(w,\omega)$, where $w\sim z$ means
that $w$ and $z$ are nearest neighbors.
Translation invariance and the triangle inequality give
\[
\Ee_p\left[\sum_{x\in C(w,\omega)}e^{\theta\|x\|_1}\right]
\le e^{\theta\|w\|_1}\chi_\theta
\le e^{\theta(\|z\|_1+1)}\chi_\theta.
\]
The conditional mean increase at this entrance is therefore at most
$m_\theta e^{\theta\|z\|_1}$, and the engulfed component is finite
almost surely. The same comparison for the initial engulfment gives
$\Ee V_\theta(\Ecal_0)\le m_\theta$.

For $F=\Ecal_{s-}$, the predictable entrance rate at
$z\in\partial^o F$ is
$r_s(z)=\frac12\sum_{y\sim z}u_{s-}(y)\le2\lambda$.
Also,
\[
\sum_{z\in\partial^o F}e^{\theta\|z\|_1}
\le4e^\theta V_\theta(F).
\]
Hence the conditional mean increase of $V_\theta$ per unit time is at
most $b_\theta V_\theta(F)$, where
$b_\theta=8\lambda e^\theta m_\theta$.

To use this bound before excluding accumulation, let
$\sigma_n=\inf\{t\ge0:|\Ecal_t|\ge n\}$ for integers $n\ge1$.
Each update adds at least one vertex, so at most $n$ updates occur
up to and including this stop.
Before the stop, the aggregate is connected, contains $0$, and has
fewer than $n$ vertices; thus $V_\theta(\Ecal_t)\le ne^{\theta n}$.
At an update time $s$, write
$J_s=V_\theta(\Ecal_s)-V_\theta(\Ecal_{s-})$.
For $M>0$, replace each stopped increment by $J_s\wedge M$.
The resulting sum has at most $n$ terms and is integrable.
The compensation formula and the preceding bounds give
\[
\Ee\left[V_\theta(\Ecal_0)
  +\sum_{0<s\le t\wedge\sigma_n}(J_s\wedge M)\right]
\le m_\theta+b_\theta\int_0^t
  \Ee\left[\mathbf 1_{\{s\le\sigma_n\}}
  V_\theta(\Ecal_{s-})\right]ds
\le m_\theta+b_\theta ne^{\theta n}t.
\]
Here the sum runs over update times. The stopped indicator
$\mathbf 1_{\{s\le\sigma_n\}}$ is predictable and includes the
last update, even if that update adds many vertices.
Letting $M\to\infty$ and using monotone convergence shows that
$f_n(t)=\Ee V_\theta(\Ecal_{t\wedge\sigma_n})$ is finite and satisfies
$f_n(t)\le m_\theta+b_\theta\int_0^t f_n(s)\,ds$.

By Gronwall's inequality,
\begin{equation}\label{eq:gronwall}
\Ee V_\theta(\Ecal_{t\wedge\sigma_n})
\le m_\theta e^{b_\theta t}.
\end{equation}

Taking $\theta=0$, for every finite $t$ we obtain
\[
\Pp(\sigma_n\le t)
\le \frac{\Ee|\Ecal_{t\wedge\sigma_n}|}{n}
\le \frac{m_0e^{b_0t}}{n}.
\]
Infinitely many updates by time $t$ would require
$\sigma_n\le t$ for every $n$.
Letting $n\to\infty$ and then taking a countable union over positive
integer $t$ excludes accumulation at any finite time.
Since every individual engulfment is finite, the process is defined
for all time, and $\sigma_n\to\infty$ almost surely.

Now fix $0<\theta\le\theta_0$.
Monotone convergence in \eqref{eq:gronwall} gives
$\Ee V_\theta(\Ecal_t)\le m_\theta e^{b_\theta t}$.
Choose $c>b_\theta/\theta$.
Since $e^{\theta R_t}\le V_\theta(\Ecal_t)$, for every integer $k\ge1$,
\[
\Pp(R_k\ge ck)
\le m_\theta e^{-(\theta c-b_\theta)k}.
\]
These probabilities are summable, and Borel-Cantelli gives $R_k<ck$ eventually almost surely.
For $k\le t<k+1$, monotonicity gives $R_t/t\le R_{k+1}/k$.
Therefore $\limsup_{t\to\infty}R_t/t\le c$ almost surely.
Returning to the original rate-one clock divides this bound by $2$.
\end{proof}

\section{Forward speed and the linear lower bound}\label{sec:linear}
Recall that
$H_t=\max_{x\in K_t}x_1=h_{e_1}(t)+1$.
We first obtain forward increment estimates before explosion.
For $\lambda>2/3$, these estimates preserve a positive density ahead
of the aggregate. In the subcritical regime, they yield a linear
lower bound on its growth. 

\subsection{Forward increments}
Let $W$ be an independent rate-one symmetric walk on $\Z$, started at $0$.
For $t<T$, define
\begin{equation}\label{eq:history}
 Y_t(r)=
 \begin{cases}H_t-H_{t-r},&0\le r\le t,\\
 \infty,&r>t,\end{cases}
 \qquad
 S_t=\frac{\lambda}{2}
 \Pp\bigl(W_r\le Y_t(r)\text{ for all }r\ge0\bigr).
\end{equation}
Here the probability concerns only $W$, with the growth history fixed.
Inequalities between profiles will always mean pointwise inequalities.

Following \cite[Sections 2 and 5]{sly2020one}, call a particle
\emph{conforming} until the first time $r$ at which its first coordinate
is at most $H_r$. Such a particle stays outside every queried set and cannot have
been swallowed. Lemma \ref{lem:density} and time reversal show
that, at time $t<T$, the conditional conforming-particle mean at each
site with first coordinate $H_t+1$ is $2S_t$.
The reversed horizontal path must satisfy the inequality in
\eqref{eq:history}, while summing over its vertical starting
coordinate preserves the initial intensity $\lambda$.
Consequently, for each site $v$ with $v_1=H_{t-}$, conforming jumps
$v+e_1\to v$ have predictable intensity $S_{t-}$.

\begin{lemma}
\label{lem:increment}
For every $0<\rho<3/2$, there are $C,c,\gamma_0>0$ such that,
for every finite stopping time $s$, on $\{s<T\}$, every $\Delta>0$,
and $0<\gamma\le\gamma_0$,
\begin{equation}\label{eq:increment}
 \Pp\left(T>s+\Delta,\ \inf_{s\le t\le s+\Delta}S_t\ge\gamma,\
 H_{s+\Delta}-H_s\le\rho\gamma\Delta\mid\Gcal_s\right)
 \le Ce^{-c\gamma\Delta}.
\end{equation}
\end{lemma}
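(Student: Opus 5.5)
The plan is to exhibit, at every time $t\in[s,s+\Delta]$ with $S_t\ge\gamma$, a family of conforming-particle jumps. Each jump in the family raises $H$ by at least one, and their total predictable intensity is at least $(\tfrac32-C\gamma)S_{t-}$. A Poisson-type large-deviation bound for counting processes with a lower bound on their compensator will then give \eqref{eq:increment}. Throughout I use the fact recorded before the lemma: conforming jumps $v+e_1\to v$ with $v_1=H_{t-}$ have predictable intensity $S_{t-}$. Since conforming particles have never been examined, their Poisson law is that of Lemma \ref{lem:density} restricted to the conforming trajectory set, and it is independent of the examined part.

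\emph{Step 1 (direct and sideways advances).} Let $x$ be a rightmost vertex of $\Ecal_{t-}$, so $x_1=H_{t-}-1$ and $z=x+e_1\in\partial^o\Ecal_{t-}$. A conforming jump $z+e_1\to z$ has intensity $S_{t-}$ and produces an update swallowing $z$. This raises $H$ by at least one, since $z+e_1\in K_t$ afterwards. This is the \emph{direct} route.

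For the \emph{sideways} routes, consider conforming particles at $z+e_1\pm e_2$, i.e.\ at first coordinate $H+1$. Such a particle may jump left to $z\pm e_2$, which has first coordinate $H$; if that site is in $\partial^o\Ecal$, the jump is again an entrance raising $H$. Alternatively it may jump vertically onto $z+e_1$ and from there reach $z$ before the level moves.

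I would enumerate these two-step paths of a single conforming particle and compute their intensity from the conditional conforming field. The aim is to show that each of the two sideways routes contributes intensity $\tfrac14 S_{t-}(1+O(\gamma))$. The total effective rate is then $(1+\tfrac12)S_{t-}$ up to $O(\gamma S)$ corrections. The first-order contribution of the sideways routes does not depend on $S$ being small; the $O(\gamma)$ corrections are the place where smallness of $\gamma\le\gamma_0$ enters. In particular, I must justify that competing events, such as the level advancing before the second step, or two advances being counted once, have relative probability $O(\gamma)$.

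\emph{Step 2 (counting process and exponential bound).} Let $A_t$ count the qualifying advances in $[s,t]$, so that $H_{t}-H_s\ge A_t$. By Step 1, on $\{\inf S\ge\gamma\}$, the compensator of $A$ satisfies $\Lambda_t\ge(\tfrac32-C\gamma)\gamma(t-s)$. Choose $\gamma_0$ so that $(\tfrac32-C\gamma_0)>\rho'$ for some fixed $\rho<\rho'<\tfrac32$. Then apply the exponential supermartingale
\[
M_t=\exp\Bigl(-\beta A_t+(1-e^{-\beta})\Lambda_t\Bigr)
\]
with a fixed small $\beta>0$, stopped at $T$, at $s+\Delta$ and at the first time $S<\gamma$. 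Markov's inequality gives $\Pp(A\le\rho\gamma\Delta,\ \Lambda\ge\rho'\gamma\Delta)\le e^{-\gamma\Delta[(1-e^{-\beta})\rho'-\beta\rho]}$. The exponent is positive for small $\beta$ since $\rho<\rho'$, which gives \eqref{eq:increment} with $c=(1-e^{-\beta})\rho'-\beta\rho$. Conditioning on $\Gcal_s$ is handled by the stopping-time version of Lemma \ref{lem:density}.

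\emph{Main obstacle.} The hard step is Step 1: proving that the sideways routes genuinely add $\tfrac12 S$, uniformly over all aggregate shapes near the tip and all growth histories. The difficulty is that the intensity of conforming particles off the tip line, and the geometry of which of $z\pm e_2$ lie in $\partial^o\Ecal$, vary with the configuration. I would first try to use only the vertical coordinate's independence from the horizontal reversed path. This makes the conforming mean at $(H+1,\cdot)$ equal to $2S$ on every row, so the sideways contribution can be computed row by row from the rate-$1/2$ vertical jumps, irrespective of the aggregate's shape. Any shape-dependent case in which a side site is not yet in the boundary would be absorbed into a case split in which that site already lies in $K$, which only helps.
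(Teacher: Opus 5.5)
\textbf{Gap in Step 1.} Neither of the sideways routes you describe supplies the extra $\tfrac12 S_{t-}$.

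The route $z+e_1\pm e_2\to z+e_1\to z$ is already contained in the direct route. The intensity $S_{t-}$ of conforming jumps $z+e_1\to z$ is $\tfrac12$ times the \emph{whole} conforming mean $2S_{t-}$ at $z+e_1$, and that mean already includes particles that arrived there vertically. Vertical motion along $\{x_1=H+1\}$ leaves the mean unchanged. It also never causes an entrance, because $\partial^o\Ecal_{t-}\subset\{x_1\le H\}$. If your $\tfrac14 S$ comes from multiplying the vertical inflow $S$ by the probability $\tfrac14$ of a next jump to $z$, then these particles are counted twice.

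The leftward jump into $z\pm e_2$ is an entrance only when $z\pm e_2\in\partial^o\Ecal_{t-}$. Since $(z\pm e_2)_1=H$, this site lies in $K$ exactly when it lies in $\partial^o\Ecal$. So your case split ("not in the boundary" absorbed into "already in $K$") is empty. For a tip of width one, where neither $z\pm e_2$ is on the boundary, your routes give total rate $S$. Your argument would then prove \eqref{eq:increment} only for $\rho<1$, and Theorem \ref{thm:linear} only for $\lambda>1$.

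\textbf{The missing idea.} Consider a particle that jumps from $z\pm e_2+e_1$ into a \emph{non-boundary} site $z\pm e_2$. It stops being conforming but stays free. After an $\operatorname{Exp}(2)$ holding time, it jumps into $z$ with probability $\tfrac14$. This two-step route through the line $\{x_1=H\}$ is the source of $\tfrac14 S$ per side, and it cannot be read off the conforming field alone.

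The paper handles it as follows. After each advance $\sigma$, it records first visits of trajectories that were conforming up to $\sigma$ to the fixed line $\{x_1=H_\sigma\}$ at $U$ and $U\pm e_2$. It thins these visits to Poisson rates $\gamma$ and $\gamma/2$, and bounds the time to the next advance by $\min\{E_1,E_2+E_3\}$ with $E_3\sim\operatorname{Exp}(2)$. It then iterates the moment generating function over successive advances. Smallness of $\gamma$ is used only to make the delay $E_3$ negligible, giving $M_\gamma(a)\to 3/(3-2a)$.

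Alternatively, your compensator-based Step 2 survives if you repair Step 1 using the $\Gcal$-entrance intensity $\tfrac12\sum_{y\sim w}u_{t-}(y)$ from the remark after Lemma \ref{lem:density}, with $z$ chosen predictably.
\begin{itemize}
\item If $z\pm e_2\notin K_t$, use the reversed walk in \eqref{eq:historical} started at $z\pm e_2$. Take the event that it either makes no jump by time $t$, or first jumps by $+e_1$ and its later horizontal displacement $W$ satisfies $W_r\le Y_t(r)$; this avoids every $K_{t-r}$ because $H$ is nondecreasing. Using $S_t\le\lambda/2$, this gives $u_t(z\pm e_2)\ge\lambda e^{-2t}+\tfrac12(1-e^{-2t})S_t\ge\tfrac12 S_t$, i.e.\ a rate of at least $\tfrac14 S_t$ into $z$.
\item If $z\pm e_2\in K_t$, the entrance into $z\pm e_2$ from $z\pm e_2+e_1$ has rate at least $S_t$, and it raises $H$.
\end{itemize}
In both cases the updates raising $H$ have compensator density at least $\tfrac32 S_{t-}$. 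Your exponential supermartingale then yields \eqref{eq:increment} for every $\rho<3/2$.
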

\begin{proof}
Fix a finite stopping time $\sigma<T$ and let $U$ be the
lexicographically first rightmost vertex of $K_\sigma$.
Since $H_\sigma=\max_{x\in\Ecal_\sigma}x_1+1$, we have
$U\in\partial^o\Ecal_\sigma$.

Condition on $\Gcal_\sigma$. Consider the particles whose paths
satisfy $X_r\cdot e_1>H_r$ for every $0\le r\le\sigma$.
By Lemma \ref{lem:density}, their complete trajectories form a
Poisson process. Continue these sampled trajectories even after
absorption, and record their first visits after $\sigma$ to the
fixed line $\{x_1=H_\sigma\}$.
The visits at $U,U+e_2,U-e_2$ form independent Poisson processes:
each trajectory has only one first visit, so the three processes
use disjoint sets of trajectories. Their common rate is
\[
r_\sigma(t)=\frac{\lambda}{2}
\Pp\bigl(W_r\le H_\sigma-H_{(t-r)\wedge\sigma}
          \text{ for }0\le r\le t\bigr),\qquad t\ge\sigma.
\]
This is the calculation in \eqref{eq:history} with $H$ held constant
after $\sigma$. Thus $r_\sigma$ is deterministic given
$\Gcal_\sigma$, and $r_\sigma(t)=S_t$ before the next height
increase or explosion.

Define
\[
\theta_\sigma
=T\wedge\inf\{t\in[\sigma,T):H_t>H_\sigma\text{ or }S_t<\gamma\},
\qquad
q_\sigma=\inf\{t\ge\sigma:r_\sigma(t)<\gamma\}.
\]
The preceding equality of rates implies $\theta_\sigma\le q_\sigma$.
This uses the infima in these definitions; it does not require
$r_\sigma(q_\sigma)<\gamma$.
If $S_\sigma<\gamma$, then $\theta_\sigma=\sigma$ and the
waiting-time bound below is immediate. Otherwise, proceed as follows.

For each visit at time $t<q_\sigma$, sample an independent uniform
variable $V\in[0,1]$, independent of all trajectories.
Keep a visit to $U$ if $V\le\gamma/r_\sigma(t)$.
Keep a visit to $U\pm e_2$ if this inequality holds and the particle's
next jump is towards $U$.
The next direction is uniform among the four neighbors, independently
of the next holding time, which has distribution $\operatorname{Exp}(2)$.
Consequently, the kept direct visits and kept side visits are
independent Poisson processes with rates
\[
r_\sigma(t)\frac{\gamma}{r_\sigma(t)}=\gamma,
\qquad
2r_\sigma(t)\frac{\gamma}{r_\sigma(t)}\frac14=\frac{\gamma}{2},
\]
respectively. The holding times attached to the kept side visits
remain independent $\operatorname{Exp}(2)$ variables.
After $q_\sigma$, extend these two processes by independent Poisson
processes of rates $\gamma$ and $\gamma/2$, assigning independent
$\operatorname{Exp}(2)$ holding times to the added side visits.

Let $E_1,E_2$ be the waiting times from $\sigma$ to the first points
of these two processes, and let $E_3$ be the holding time attached
to the first side point. Then
\[
E_1\sim\operatorname{Exp}(\gamma),\qquad
E_2\sim\operatorname{Exp}(\gamma/2),\qquad
E_3\sim\operatorname{Exp}(2)
\]
are independent, conditionally on $\Gcal_\sigma$.
Before $\theta_\sigma$, the recorded first visits agree with actual
particle arrivals. A direct visit to $U$ increases $H$.
After a kept side visit, the particle stays at $U\pm e_2$ until
its next jump. Absorption there already increases $H$; otherwise
its prescribed jump to $U$ does so, since $U$ remains on the outer
boundary until $H$ increases.
The added points after $q_\sigma$ cause no difficulty because
$\theta_\sigma\le q_\sigma$. Hence, on this enlarged probability space,
\begin{equation}\label{eq:waiting}
\theta_\sigma-\sigma\le T_\gamma:=\min\{E_1,E_2+E_3\}.
\end{equation}
The selections above are used only for this comparison; they are
not included in $\Gcal$. We have therefore obtained a conditional
waiting-time bound given the original $\Gcal_\sigma$.

For $0<a<1$, put $M_\gamma(a)=\Ee e^{a\gamma T_\gamma}$.
Since $T_\gamma\le E_1$, we have $M_\gamma(a)\le(1-a)^{-1}$.
Using independence and the exponential distributions gives
\begin{align*}
M_\gamma(a)
&=1+\frac{a}{1-a}
  \left(1-\Ee e^{-(1-a)\gamma(E_2+E_3)}\right)\\
&=1+\frac{a}{1-a}
  \left(1-\frac{2}{(3-2a)(2+(1-a)\gamma)}\right)\\
&=\frac{6+(3-2a)\gamma}{(3-2a)(2+(1-a)\gamma)}
  \longrightarrow \frac{3}{3-2a}.
\end{align*}
Since $\rho<3/2$, choose $a\in(0,1)$ such that
$\rho\log[3/(3-2a)]<a$.
There are then $c,\gamma_0>0$ such that
$\rho\log M_\gamma(a)\le a-c$ for $0<\gamma\le\gamma_0$.

Now put $\zeta=T\wedge\inf\{t\in[s,T):S_t<\gamma\}$ and
$\sigma_0=s$.
If $\sigma_k<\zeta$, define
$\sigma_{k+1}=\zeta\wedge
\inf\{t\in(\sigma_k,T):H_t>H_{\sigma_k}\}$;
otherwise set $\sigma_{k+1}=\sigma_k$.
On $\{\sigma_k<\zeta\}$, the waiting-time bound applies with
$\sigma=\sigma_k$; on the complementary event the increment is zero.
In particular, induction shows that every $\sigma_k$ is finite
almost surely, and
\[
\Ee\!\left[e^{a\gamma(\sigma_{k+1}-\sigma_k)}
           \,\middle|\,\Gcal_{\sigma_k}\right]\le M_\gamma(a).
\]
Iterated conditioning yields
$\Ee[e^{a\gamma(\sigma_n-s)}\mid\Gcal_s]\le M_\gamma(a)^n$.

Take $n=\lfloor\rho\gamma\Delta\rfloor+1$.
On the event in \eqref{eq:increment}, we have $\zeta\ge s+\Delta$
and fewer than $n$ height advances by $s+\Delta$, because each
advance increases $H$ by at least one.
Thus $\sigma_n-s\ge\Delta$, and exponential Markov inequality gives
\[
\Pp(\sigma_n-s\ge\Delta\mid\Gcal_s)
\le e^{-a\gamma\Delta}M_\gamma(a)^n
\le M_\gamma(a)e^{-c\gamma\Delta}
\le\frac{e^{-c\gamma\Delta}}{1-a}.
\]
This proves \eqref{eq:increment}, with $C=(1-a)^{-1}$.
\end{proof}

\subsection{History profiles and amplification}
We adapt the profiles in \cite[Lemmas 3.1 and 3.5]{sly2020one},
shifting both branches so that a profile remains usable after a
period without growth. For $u\ge0$ and $\alpha>0$, set
$g(u)=\sqrt u\log_2(u+2)$ and
\begin{equation}\label{eq:profile}
 z_\alpha(r)=\min\{\alpha u,g(u)\},
 \qquad u=(r-\alpha^{-3/2})_+.
\end{equation}
Each $z_\alpha$ is nondecreasing and $\alpha$-Lipschitz, and the
profiles increase with $\alpha$.
These properties follow from the monotonicity of $g(u)/u$ and
$0\le g'(u)\le g(u)/u$ for $u>0$.
In particular, for $A\ge0$,
\begin{equation}\label{eq:fallback}
 \beta\le\alpha,\quad \beta^{-3/2}\ge\alpha^{-3/2}+A
 \quad\Longrightarrow\quad
 z_\beta(r)\le z_\alpha((r-A)_+).
\end{equation}
Monotonicity of $H$ also gives
$Y_{s+A}(r)\ge Y_s((r-A)_+)$ whenever $s+A<T$.
Thus \eqref{eq:fallback} applies even when no growth occurs.

\begin{lemma}
\label{lem:barrier}
For every $0<\varepsilon<1$, all sufficiently small $\alpha>0$
satisfy, with $\Delta_\alpha=\alpha^{-4/3}$,
\begin{equation}\label{eq:barrier}
 \Pp\bigl(W_r\le z_\alpha((r-\Delta_\alpha)_+)
                    \text{ for all }r\ge0\bigr)
 \ge2(1-\varepsilon)\alpha.
\end{equation}
\end{lemma}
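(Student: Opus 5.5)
The plan is to split the event in \eqref{eq:barrier} at the time $r_0=\alpha^{-3/2}+\Delta_\alpha$, where the barrier $z_\alpha((r-\Delta_\alpha)_+)$ first becomes positive. We then compare the problem with the classical estimate for a walk staying below the line $\alpha u$. Write $b(r)=z_\alpha((r-\Delta_\alpha)_+)$. Then $b=0$ on $[0,r_0]$, and $b(r_0+u)=\min\{\alpha u,g(u)\}$ for $u\ge0$. Note that $r_0=\alpha^{-3/2}(1+\alpha^{1/6})$, so the delay $\Delta_\alpha$ is negligible on the scale $\alpha^{-3/2}$ but large compared with $1$.

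\emph{Step 1 (the reference probability).} For the rate-one walk $W$ and a linear barrier, we use the standard identity
\[
\Pp(W_r\le \alpha r+x\ \forall r\ge0)\approx 2\alpha(x+c_0),
\]
valid for small $\alpha$ and $x\ge0$. This can be obtained from the exponential martingale $e^{\theta W_r-r(\cosh\theta-1)}$ with $\cosh\theta-1=\alpha\theta$, which gives $\theta\approx2\alpha$ and $\Pp(\sup_r(W_r-\alpha r)\ge m)\approx e^{-\theta m}$. In particular, staying below $\alpha r$ from $W_0=0$ already has probability $\approx2\alpha\cdot\Ee[\text{overshoot ladder}]$, and the precise constant is $2\alpha(1+o(1))$ with the convention $W_r\le\alpha r$. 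Rather than rely on this, we argue directly as follows.

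\emph{Step 2 (decomposition).} Let $A$ be the event that $W_r\le0$ on $[0,r_0]$. Let $B$ be the event that, afterwards, $W_{r_0+u}-W_{r_0}\le \min\{\alpha u,g(u)\}-W_{r_0}$ for all $u\ge0$.

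Conditionally on $A$, the walk at time $r_0$ is at depth $-W_{r_0}$ of order $\sqrt{r_0}\approx\alpha^{-3/4}$, which is $\gg\alpha^{-1}$... That is false: $\alpha^{-3/4}\ll\alpha^{-1}$. So the mechanism is different, and we reorganize.

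Take the event $E=\{W_r\le\min\{\alpha r', g(r')\}\}$ with the shift handled as follows. By the Markov property at time $\Delta_\alpha$ together with monotonicity of $b$, it suffices to show two things:
\begin{enumerate}
\item $\Pp(W_r\le \min\{\alpha u, g(u)\},\ u=(r-\alpha^{-3/2})_+)\ge 2(1-\varepsilon/2)\alpha$, since the unshifted profile \eqref{eq:profile} has this property, which is the adaptation of \cite[Lemma 3.1]{sly2020one};
\item inserting an initial stretch of length $\Delta_\alpha$ on which $W\le0$ only costs a factor $1-\varepsilon/2$.
\end{enumerate}

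For (1), start from the linear barrier. The walk stays below $\alpha u$ after first dropping to a level $-x$ with $x\asymp\alpha^{-3/4}$ during the flat period $[0,\alpha^{-3/2}]$. The probability of $\{W\le0\text{ on }[0,L]\}$ is $\asymp L^{-1/2}$. Combined with the linear escape estimate $2\alpha(x+O(1))$ from depth $x$, the product is $\asymp 2\alpha\,\Ee[-W_L;\,W\le0]\cdot(\ldots)$. The optional-stopping identity $\Ee[-W_{L}\wedge\cdot\,;\,\text{stayed}\le0]=O(1)$ shows that the product equals $2\alpha(1+o(1))$ exactly, because $-W$ killed above $0$ is a martingale. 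The curved branch $g$ replaces $\alpha u$ only for $u\ge u^*$ with $\sqrt{u^*}\log_2 u^*=\alpha u^*$, i.e.\ $u^*\approx\alpha^{-2}\log^2(1/\alpha)$. Beyond $u^*$ the walk, started from a point below the barrier, crosses $g$ with probability $o(1)$ by a law-of-iterated-logarithm-type union bound over dyadic scales (the $\log_2$ factor makes $\sum_k\Pp(\sup_{[2^k,2^{k+1}]}W>\sqrt{2^k}\,k)$ summable). Before $u^*$, the walk is at a typical height $-\alpha u$ relative to the line, and this contribution is controlled using the drifted walk.

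For (2), the extra time $\Delta_\alpha=\alpha^{-4/3}\ll\alpha^{-3/2}$ changes $\Pp(W\le0\text{ on }[0,L])$ by a factor $\sqrt{L/(L+\Delta_\alpha)}=1-O(\alpha^{1/6})$. The martingale identity still applies, so the loss is $1-o(1)$.

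\emph{Main obstacle.} The key difficulty is obtaining the sharp constant $2$, not merely order $\alpha$. This requires the exact martingale/harmonic computation for the walk kept below $0$ and then below the line $\alpha u$: $h(x)=\Pp_x(\text{stay below }\alpha u)$ satisfies $h(x)\sim2\alpha(-x)$ uniformly for $1\ll -x\ll\alpha^{-1}$. Together with $\Ee[-W_L\mid W\le0\text{ on }[0,L]]\,\Pp(W\le0\text{ on }[0,L])=\Ee_0[\text{killed }-W_L]\to1$, this yields the constant. The errors in this step must be tracked uniformly in $\alpha$.
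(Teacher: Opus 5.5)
Your overall architecture matches the paper's: a sharp linear-barrier estimate with a long flat start gives $2\alpha(1+o(1))$, and the curved branch contributes a negligible correction. Setting aside the decomposition you abandon midway, however, the argument you settle on has two steps that fail as written.

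\emph{The reduction to (1) and (2).} You justify it "by the Markov property at time $\Delta_\alpha$ together with monotonicity of $b$." Write $\phi(x)$ for the probability that a walk from $x$ stays below $z_\alpha$. Monotonicity only gives $\phi(W_{\Delta_\alpha})\ge\phi(0)$ on $\{W\le0\text{ on }[0,\Delta_\alpha]\}$. That yields a lower bound of order $\Delta_\alpha^{-1/2}\alpha=\alpha^{5/3}$, not $(1-\varepsilon/2)\cdot 2\alpha$.

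The delay is harmless only because $\phi$ grows linearly with depth, so the survival cost is exactly compensated. Let $\tau$ be the hitting time of $1$; optional stopping gives $\Ee[-W_L;\tau>L]=\Pp(\tau\le L)\to1$ for every $L\to\infty$. Your ratio $\sqrt{L/(L+\Delta_\alpha)}$ tracks only the survival probability, not this compensation.

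The clean fix is to drop step (2). The delayed event is exactly the flat-start event with flat length $D=\alpha^{-3/2}+\Delta_\alpha$. Since $1\ll D\ll\alpha^{-2}$, the harmonic-function computation in your last paragraph (equivalently, the proof of Sly's Lemma 3.1, which the paper cites) gives $\Pp(W_r\le\alpha(r-D)_+\ \forall r\ge0)\ge2(1-\varepsilon/2)\alpha$ directly.

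\emph{The curved branch.} You claim a crossing probability of $o(1)$ "started from a point below the barrier." This is not uniform: a walk started just below $g(u^*)$ crosses $g$ with probability bounded away from $0$. Making it conditional on the linear event would require an analysis you do not supply ("controlled using the drifted walk").

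What is needed, and what the paper proves, is an \emph{unconditional} $o(\alpha)$ bound for the walk started at $0$. Since $\alpha u\le\sqrt u\le g(u)$ for $u\le\alpha^{-2}$, the target event contains $A\setminus B$. Here $A=\{W_r\le\alpha(r-D)_+\ \forall r\}$ and $B=\{\exists r\ge D+\alpha^{-2}:W_r>g(r-D)\}$. Use the blocks $[D+2^k\alpha^{-2},D+2^{k+1}\alpha^{-2}]$ and the maximal bound $\Pp(\max_{v\le t}W_v\ge b)\le\exp[-\frac14\min\{b^2/t,b\}]$. Together they give $\Pp(B)\le\sum_{k\ge0}e^{-(k+2\log_2(1/\alpha))^2/12}=o(\alpha)$, hence $\Pp(A)-\Pp(B)\ge2(1-\varepsilon)\alpha$.

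Your dyadic LIL-type sum is the right tool, but summability alone only yields $o(1)$. To beat the factor $\alpha$ you need the Gaussian decay at scales beyond $\alpha^{-2}$, where $\log_2(u+2)\ge2\log_2(1/\alpha)$. With this unconditional bound, nothing needs to be controlled before $u^*$, since there the minimum is the linear branch.
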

\begin{proof}
Put $D=\alpha^{-3/2}+\Delta_\alpha$, and recall that
$g(u)=\sqrt u\log_2(u+2)$. Then
\[
z_\alpha((r-\Delta_\alpha)_+)
=\min\{\alpha(r-D)_+,g((r-D)_+)\}.
\]
For $A=\{W_r\le\alpha(r-D)_+\text{ for all }r\ge0\}$,
\cite[proof of Lemma 3.1]{sly2020one} gives
$\Pp(A)\ge2(1-\varepsilon/2)\alpha$ for sufficiently small $\alpha$.

If $0\le u\le\alpha^{-2}$, then
$\alpha u\le\sqrt u\le g(u)$.
Thus, with
\(
B=\{\exists r\ge D+\alpha^{-2}:W_r>g(r-D)\},
\)
we have
$A\setminus B\subseteq
\{W_r\le z_\alpha((r-\Delta_\alpha)_+)\text{ for all }r\ge0\}$.
It remains to show that $\Pp(B)=o(\alpha)$.

For $a>0$, the identity
$\Ee e^{aW_t}=e^{t(\cosh a-1)}$ and Doob's inequality
\cite[Theorem 4.4.2]{Durrett} give
$\Pp(\max_{v\le t}W_v\ge b)\le e^{-ab+t(\cosh a-1)}$.
Taking $a=\min\{b/(2t),1\}$ and using
$\cosh a-1\le a^2$ for $0\le a\le1$, we obtain
\begin{equation}\label{eq:walk-maximum}
\Pp\bigl(\max_{v\le t}W_v\ge b\bigr)
\le\exp\!\left[-\frac14\min\left\{\frac{b^2}{t},b\right\}\right],
\qquad t,b>0.
\end{equation}

Set $L=\log_2(1/\alpha)$, $u_k=2^k\alpha^{-2}$ and
$t_k=D+2u_k$, $k\ge0$.
For sufficiently small $\alpha$, we have $D\le\alpha^{-2}$ and
$\sqrt u\ge\log_2(u+2)$ for every $u\ge\alpha^{-2}$.
Consequently, $t_k\le3u_k$ and
\[
\min\left\{\frac{g(u_k)^2}{t_k},g(u_k)\right\}
\ge\frac13\log_2^2(u_k+2)
\ge\frac13(k+2L)^2.
\]
Since $g$ is increasing, a crossing on $[D+u_k,D+2u_k]$ implies
$\max_{v\le t_k}W_v>g(u_k)$.
These intervals cover $[D+\alpha^{-2},\infty)$, so
\begin{align*}
\Pp(B)
\le\sum_{k=0}^{\infty}
  \Pp\bigl(\max_{v\le t_k}W_v\ge g(u_k)\bigr)
\le\sum_{k=0}^{\infty}e^{-(k+2L)^2/12}
\le e^{-L^2/3}\sum_{k=0}^{\infty}e^{-k^2/12}
=o(\alpha).
\end{align*}
For sufficiently small $\alpha$, this is at most $\varepsilon\alpha$.
Therefore
\[
\Pp\bigl(W_r\le z_\alpha((r-\Delta_\alpha)_+)
          \text{ for all }r\ge0\bigr)
\ge\Pp(A)-\Pp(B)
\ge2(1-\varepsilon)\alpha,
\]
which proves \eqref{eq:barrier}.
\end{proof}


For the rest of this section, assume $\lambda>2/3$ and choose
$0<\rho<3/2$, $0<\varepsilon<1$, and $\kappa>1$ such that
\begin{equation}\label{eq:lambda}
 1<\kappa<\rho\lambda(1-\varepsilon).
\end{equation}
Set $A_\alpha=\lceil\alpha^{-5/3}\rceil\Delta_\alpha$;
then $\alpha^{-3}\le A_\alpha\le2\alpha^{-3}$ for $\alpha\le1$.

\begin{lemma}\label{lem:boost}
Fix the parameters as in \eqref{eq:lambda}, and write
$\Delta_\alpha=\alpha^{-4/3}$,
$N_\alpha=\lceil\alpha^{-5/3}\rceil$ and
$A_\alpha=N_\alpha\Delta_\alpha$.
There are $C,c,\bar\alpha>0$ such that the following holds for
$0<\alpha\le\bar\alpha$ and every finite stopping time $s$.
Set $t_j=s+j\Delta_\alpha$ and define
\begin{align*}
G(s,\alpha)={}&\{T\le t_{N_\alpha}\}
\cup\left\{T>t_{N_\alpha},\ Y_s\ge z_\alpha,\
\min_{0\le j<N_\alpha}(H_{t_{j+1}}-H_{t_j})
\ge\kappa\alpha\Delta_\alpha\right\}.
\end{align*}
Then $G(s,\alpha)\in\Gcal_{s+A_\alpha}$ and, on
$\{s<T,\ Y_s\ge z_\alpha\}$,
\begin{equation}\label{eq:boost}
\Pp(G(s,\alpha)^c\mid\Gcal_s)
\le\epsilon(\alpha):=C\alpha^{-5/3}e^{-c\alpha^{-1/3}}.
\end{equation}
On $G(s,\alpha)\cap\{T>s+A_\alpha\}$,
$Y_{s+A_\alpha}\ge z_{\kappa\alpha}$ and
$H_{s+A_\alpha}-H_s\ge\kappa\alpha A_\alpha$.
\end{lemma}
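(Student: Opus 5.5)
The plan is an induction over the $N_\alpha$ blocks $[t_j,t_{j+1}]$ of length $\Delta_\alpha$. At the start of each block the history profile is good enough that $S$ stays above a level $\gamma\approx(1-\varepsilon)\lambda\alpha$ for the whole block. Lemma \ref{lem:increment} then gives a large advance in the block, except on an event of exponentially small conditional probability.

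\textbf{Measurability.} $G(s,\alpha)\in\Gcal_{s+A_\alpha}$ holds because $Y_s$ is $\Gcal_s$-measurable. The increments $H_{t_{j+1}}-H_{t_j}$ are determined by the aggregate up to $t_{N_\alpha}=s+A_\alpha$. The event $\{T\le t_{N_\alpha}\}$ is recorded in $\Gcal_{s+A_\alpha}$.

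\textbf{Inductive hypothesis.} Let $G_j$ be the event that $T>t_j$ and every earlier block advanced by at least $\kappa\alpha\Delta_\alpha$. On $G_j\cap\{Y_s\ge z_\alpha\}$ I claim that, for every $t\in[t_j,t_{j+1}]$ with $t<T$,
\[
Y_t(r)\ge z_\alpha\bigl((r-\Delta_\alpha)_+\bigr)\quad\text{for all } r\ge0 .
\]

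\textbf{Proving the hypothesis.} For $r\le t-t_j$ the right side vanishes, since $t-t_j\le\Delta_\alpha$ and $z_\alpha$ vanishes near $0$. For larger $r$, monotonicity of $H$ reduces the claim to $Y_{t_j}(r')\ge z_\alpha(r')$ with $r'=r-(t-t_j)\ge r-\Delta_\alpha$, and $z_\alpha$ is nondecreasing.

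To check $Y_{t_j}\ge z_\alpha$, split the past at $t_j$ into the $j$ completed blocks and the time before $s$.
\begin{itemize}
\item Over the completed blocks, $H$ grew by at least $\kappa\alpha\Delta_\alpha$ per block. On a lag $r'$ covering $i$ full blocks, the growth is at least $\kappa\alpha(r'-\Delta_\alpha)$ up to one block of slack. This dominates $\alpha r'$ once $r'\gtrsim\kappa\Delta_\alpha/(\kappa-1)$. For shorter lags, $z_\alpha(r')\le\alpha(r'-\alpha^{-3/2})_+=0$, because $\Delta_\alpha\ll\alpha^{-3/2}$. This is the purpose of the $\alpha^{-3/2}$ shift in \eqref{eq:profile}.
\item Lags reaching before $s$ add $Y_s$ on top of the block growth. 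Using $Y_s\ge z_\alpha$ and the $\alpha$-Lipschitz property,
\[
Y_{t_j}(r')\ge j\kappa\alpha\Delta_\alpha+z_\alpha\bigl(r'-j\Delta_\alpha\bigr)\ge z_\alpha(r').
\]
\end{itemize}
This shift-and-Lipschitz bookkeeping is the step I expect to be most delicate, since the block discretisation must be absorbed by the $\alpha^{-3/2}$ buffer.

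\textbf{Lower bound on $S$.} Given the hypothesis, \eqref{eq:history} and Lemma \ref{lem:barrier} give, for all $t\in[t_j,t_{j+1}\wedge T)$,
\[
S_t\ge\tfrac{\lambda}{2}\cdot 2(1-\varepsilon)\alpha=:\gamma .
\]

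\textbf{One block via Lemma \ref{lem:increment}.} Apply Lemma \ref{lem:increment} at the stopping time $t_j$ with this $\gamma$ and with $\rho$ from \eqref{eq:lambda}; note $\gamma\le\gamma_0$ for small $\alpha$. On $G_j$, conditionally on $\Gcal_{t_j}$, the bad event is that $T>t_{j+1}$ and $H_{t_{j+1}}-H_{t_j}<\kappa\alpha\Delta_\alpha$. Since
\[
\kappa\alpha\Delta_\alpha<\rho\lambda(1-\varepsilon)\alpha\Delta_\alpha=\rho\gamma\Delta_\alpha ,
\]
the bad event has conditional probability at most $Ce^{-c\gamma\Delta_\alpha}=Ce^{-c'\alpha^{-1/3}}$. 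Explosion inside a block puts us in $\{T\le t_{N_\alpha}\}\subset G$. A union bound over the $N_\alpha\le 2\alpha^{-5/3}$ blocks, with iterated conditioning, gives \eqref{eq:boost}.

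\textbf{Final assertions.} On $G\cap\{T>s+A_\alpha\}$, summing the block increments gives $H_{s+A_\alpha}-H_s\ge N_\alpha\kappa\alpha\Delta_\alpha=\kappa\alpha A_\alpha$. For $Y_{s+A_\alpha}\ge z_{\kappa\alpha}$, split again.
\begin{itemize}
\item For lags up to $A_\alpha$, block growth gives roughly $\kappa\alpha(r-\Delta_\alpha)$. This is at least $\kappa\alpha(r-(\kappa\alpha)^{-3/2})_+$, because $(\kappa\alpha)^{-3/2}\gg\Delta_\alpha$.
\item For lags beyond $A_\alpha$, we have $Y_{s+A_\alpha}(r)\ge\kappa\alpha A_\alpha+z_\alpha(r-A_\alpha)$. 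I must show this dominates $g\bigl((r-(\kappa\alpha)^{-3/2})_+\bigr)$. This uses concavity of $g$ together with the large constant $\kappa\alpha A_\alpha\ge\kappa\alpha^{-2}$, which exceeds the jump $g(u)-g(u-A_\alpha)\lesssim A_\alpha/\sqrt{u}$ for $u\ge\alpha^{-2}$. It also uses that for $u\le\alpha^{-2}$ the linear branch controls. This check is the second technical point and needs care with logarithms.
\end{itemize}
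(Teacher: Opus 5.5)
Your outline follows the paper's proof. The steps are the same: a block induction maintaining $Y_{t_j}\ge z_\alpha$, Lemma~\ref{lem:barrier} to get $S_t\ge\lambda(1-\varepsilon)\alpha$ on each block, Lemma~\ref{lem:increment} per block, a first-failed-block union bound, and a final profile check split by lag. Your direct two-range verification of $Y_{t_j}\ge z_\alpha$ is equivalent to the paper's one-block propagation $Y_{t_{j+1}}(r)\ge\kappa\alpha\Delta_\alpha+z_\alpha(r-\Delta_\alpha)\ge z_\alpha(r)$. Everything through \eqref{eq:boost} is correct, as is the treatment of lags $r\le A_\alpha$ at the end.

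The last bullet, however, does not go through as you set it up. Write $u=(r-A_\alpha-\alpha^{-3/2})_+$ and $v=(r-(\kappa\alpha)^{-3/2})_+$. Because of the logarithm in $g(w)=\sqrt w\log_2(w+2)$, splitting at scale $\alpha^{-2}$ fails in two ways.

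\emph{Wrong branch.} $z_\alpha(r-A_\alpha)$ equals $\alpha u$, not $g(u)$, for all $u\le\alpha^{-2}\log_2^2(1/\alpha)$. In that range your lower bound is $\kappa\alpha A_\alpha+\alpha u\le\kappa\alpha A_\alpha+g(u)$, so a jump estimate for $g$ does not give what is needed.

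\emph{Jump too large.} $g'(w)\ge\log_2(w+2)/(2\sqrt w)$. At $u\approx\alpha^{-2}$ the concavity bound $(v-u)g'(u)$ is therefore about $\alpha A_\alpha\log_2(1/\alpha)$, which exceeds $\kappa\alpha A_\alpha$.

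The split has to be at scale $A_\alpha$, as in the paper.

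\emph{Case $A_\alpha\le r\le2A_\alpha$.} Use only $Y_{s+A_\alpha}(r)\ge H_{s+A_\alpha}-H_s\ge\kappa\alpha A_\alpha\ge g(2A_\alpha)\ge z_{\kappa\alpha}(r)$. This holds for small $\alpha$, since $g(2A_\alpha)=O(\alpha^{-3/2}\log(1/\alpha))$ while $\kappa\alpha A_\alpha\ge\kappa\alpha^{-2}$.

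\emph{Case $r\ge2A_\alpha$.} Here $u\ge A_\alpha/2$ and $0<v-u\le\tfrac32A_\alpha$. For $w\ge u$,
$g'(w)\le g(w)/w\le2\log_2A_\alpha/\sqrt{A_\alpha}\le\alpha$.
This gives two things at once. First, $g(u)\le\alpha u$, so $z_\alpha(r-A_\alpha)=g(u)$. Second, $g(v)-g(u)\le3\sqrt{A_\alpha}\log_2A_\alpha\le\kappa\alpha A_\alpha$. Hence
$Y_{s+A_\alpha}(r)\ge\kappa\alpha A_\alpha+g(u)\ge g(v)\ge z_{\kappa\alpha}(r)$.
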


\begin{proof}
Write $\Delta=\Delta_\alpha$, $N=N_\alpha$ and $A=A_\alpha$.
Suppose $t_j<T$ and $Y_{t_j}\ge z_\alpha$.
For $t_j\le t\le t_{j+1}$, before $T$, monotonicity gives
$Y_t(r)\ge z_\alpha((r-\Delta)_+)$.
Lemma \ref{lem:barrier} therefore yields
$S_t\ge\gamma:=\lambda(1-\varepsilon)\alpha$ throughout this interval.
Since $\rho\gamma>\kappa\alpha$, Lemma \ref{lem:increment} gives,
for sufficiently small $\alpha$,
\[
\Pp\bigl(T>t_{j+1},\
H_{t_{j+1}}-H_{t_j}<\kappa\alpha\Delta
\mid\Gcal_{t_j}\bigr)
\le Ce^{-c\alpha\Delta}=Ce^{-c\alpha^{-1/3}}.
\]

If the block has height increase at least $\kappa\alpha\Delta$,
then its endpoint still satisfies $Y_{t_{j+1}}\ge z_\alpha$.
Indeed, $z_\alpha(r)=0$ for $r\le\Delta$, while for $r\ge\Delta$,
\(
Y_{t_{j+1}}(r)
\ge\kappa\alpha\Delta+z_\alpha(r-\Delta)
\ge z_\alpha(r),
\)
by the $\alpha$-Lipschitz property of $z_\alpha$ and $\kappa>1$.

For $j<N$, let
\(
F_j=\left\{T>t_j,\ Y_s\ge z_\alpha,\
H_{t_{i+1}}-H_{t_i}\ge\kappa\alpha\Delta
\text{ for every }0\le i<j\right\},
\)
and let $B_j=\{T>t_{j+1},\
H_{t_{j+1}}-H_{t_j}<\kappa\alpha\Delta\}$.
Then $F_j\in\Gcal_{t_j}$ and
$F_j\subseteq\{T>t_j,\ Y_{t_j}\ge z_\alpha\}$.
On $F_0=\{s<T,\ Y_s\ge z_\alpha\}$, failure of $G(s,\alpha)$
requires a first failed block. Thus
\begin{align*}
\Pp(G(s,\alpha)^c\mid\Gcal_s)
\le\sum_{j=0}^{N-1}
\Ee\!\left[\one_{F_j}\Pp(B_j\mid\Gcal_{t_j})
                 \,\middle|\,\Gcal_s\right]
\le NC e^{-c\alpha^{-1/3}}
\le C\alpha^{-5/3}e^{-c\alpha^{-1/3}}.
\end{align*}
This proves \eqref{eq:boost}.

Now assume $G(s,\alpha)$ and $T>s+A$.
Summing the block increments gives
$H_{s+A}-H_s\ge\kappa\alpha A$.
Put $d=\alpha^{-3/2}$ and $d'=(\kappa\alpha)^{-3/2}$.
Since $\alpha^{-3}\le A\le2\alpha^{-3}$ for $\alpha\le1$,
\[
\frac{\log_2 A}{\alpha\sqrt A}
=O\!\left(\alpha^{1/2}\log(1/\alpha)\right)\longrightarrow0.
\]
Taking $\bar\alpha$ small enough, we may assume $A\ge4$ and
\[
\Delta\le d',\qquad d\le A/2,\qquad
\max\{g(2A),\,3\sqrt A\log_2 A\}\le\kappa\alpha A,
\qquad \frac{2\log_2 A}{\sqrt A}\le\alpha.
\]
We check $Y_{s+A}(r)\ge z_{\kappa\alpha}(r)$ for $r\le s+A$;
for larger $r$, the left side is infinite.

For $0\le r\le A$, the interval $[s+A-r,s+A]$ contains
$\lfloor r/\Delta\rfloor$ whole blocks. Hence
\begin{align*}
Y_{s+A}(r)
&\ge\kappa\alpha\Delta\lfloor r/\Delta\rfloor
\ge\kappa\alpha(r-\Delta)_+\\
&\ge\kappa\alpha(r-d')_+
\ge z_{\kappa\alpha}(r).
\end{align*}
For $A\le r\le2A$,
\[
Y_{s+A}(r)\ge H_{s+A}-H_s
\ge\kappa\alpha A\ge g(2A)\ge z_{\kappa\alpha}(r).
\]

Finally, let $r\ge2A$ and put $u=r-A-d$, $v=r-d'$.
Then $u\ge A/2$ and $0<v-u=A+d-d'\le3A/2$.
The function $g(w)/w$ is decreasing, and
\[
0\le g'(w)
=\frac{\log_2(w+2)}{2\sqrt w}
 +\frac{\sqrt w}{(w+2)\log 2}
\le\frac{g(w)}w
\le\frac{2\log_2 A}{\sqrt A},\qquad w\ge A/2.
\]
In particular, $g(u)\le\alpha u$, so $z_\alpha(r-A)=g(u)$.
Moreover,
\[
g(v)-g(u)
=\int_u^v g'(w)\,dw
\le\frac{2(v-u)\log_2 A}{\sqrt A}
\le3\sqrt A\log_2 A
\le\kappa\alpha A.
\]
Consequently,
\[
Y_{s+A}(r)
=H_{s+A}-H_s+Y_s(r-A)
\ge\kappa\alpha A+g(u)
\ge g(v)\ge z_{\kappa\alpha}(r).
\]
This completes the proof.
\end{proof}

\subsection{Good histories and forward density}
\begin{lemma}\label{lem:good}
Suppose $\lambda>2/3$. For every sufficiently small $\alpha_*>0$
and every deterministic $s\ge0$, there is a deterministic
$L(s)\in[0,\infty)$ such that, on $\{s<T\}$,
\begin{equation}\label{eq:good}
\Pp\bigl(T\le s+L(s)\ \text{or}\
\{T>s+L(s),\ Y_{s+L(s)}\ge z_{\alpha_*}\}
\mid\Gcal_s\bigr)\ge\tfrac12.
\end{equation}
\end{lemma}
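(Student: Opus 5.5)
The plan is to start from an arbitrarily weak history and push it up to $z_{\alpha_*}$ with the amplification of Lemma \ref{lem:boost}. Two ingredients are needed: a \emph{seeding} step, in which the process first acquires a profile $Y\ge z_{\alpha_0}$ for some very small $\alpha_0$, and an \emph{iteration} step, in which Lemma \ref{lem:boost} is applied with $\alpha_k=\kappa^k\alpha_0$ until $\alpha_k\ge\alpha_*$.

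For seeding, I use that $z_\alpha(r)=0$ for $r\le\alpha^{-3/2}$ and that $z_\alpha$ decreases to $0$ pointwise as $\alpha\downarrow0$. Since $Y_t(r)=\infty$ for $r>t$, only the finite window $[0,t]$ matters. The aim is an event of conditional probability close to one on which, at some deterministic later time $s+L_0$, the history dominates $z_{\alpha_0}$. I would proceed as follows.
\begin{enumerate}
\item Choose $L_0$ so large that the fixed history before $s$ is irrelevant. By the fallback relation \eqref{eq:fallback} and the monotonicity $Y_{s+A}(r)\ge Y_s((r-A)_+)$, a waiting period shifts the profile but only costs a smaller $\alpha$.
\item Require $H$ to increase by at least one in each of the unit-order blocks covering $[s,s+L_0]$. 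This gives $Y_{s+L_0}(r)\gtrsim r/\ell$ for a block length $\ell$.
\item Note that $z_{\alpha_0}(r)\le\alpha_0 r$ and vanishes for $r\le\alpha_0^{-3/2}$. Hence a linear lower bound with slope $1/\ell\ge\alpha_0$, valid for $r$ up to $L_0$, suffices, provided $L_0\ge \alpha_0^{-3/2}$ and we also handle $r$ in $[L_0,s+L_0]$ through the total increase.
\end{enumerate}

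The increase in each block has conditional probability bounded below uniformly, because $S_t$ is always positive. For $r\le t$ the barrier $Y_t\ge0$ gives $S_t\ge\frac\lambda2\Pp(W_r\le 0\ \forall r\le t,\ W_r\le Y_t(r)\ \forall r>t)$, which is only polynomially small in $t$. A direct one-step entrance at $U$ therefore has rate bounded below uniformly on a block. This is where I expect the main obstacle: the uniform lower bound deteriorates with $s$, which is why $L(s)$ is allowed to depend on $s$. I would handle this by making the per-block rate bound depend on $s+L_0$ and choosing $\alpha_0=\alpha_0(s)$ accordingly, for instance by taking $\ell$ with $\ell\cdot(\text{rate lower bound})$ large. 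The probability that a block fails is then small, and a union over blocks is at most $1/4$. It is cleaner to apply Lemma \ref{lem:increment} with $\gamma=\gamma(s)$ the deterministic lower bound on $S_t$ over $[s,s+L_0]$ and $\rho=1$; this gives increase at least $\gamma\Delta$ over $\Delta$ with failure probability $Ce^{-c\gamma\Delta}$.

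For the iteration, once $Y_{s+L_0}\ge z_{\alpha_0}$ with $\alpha_0\le\bar\alpha$, apply Lemma \ref{lem:boost} successively at the stopping times $s_k=s+L_0+\sum_{i<k}A_{\alpha_i}$, which are deterministic. Each step yields $Y_{s_{k+1}}\ge z_{\alpha_{k+1}}$ off an event of conditional probability at most $\epsilon(\alpha_k)$. Since $\alpha_k$ grows geometrically, $\sum_k\epsilon(\alpha_k)\le C\epsilon(\alpha_0)$-type bounds are dominated by the last terms. With $\alpha_*\le\bar\alpha$ small enough, $\sum_{k}\epsilon(\kappa^k\alpha_0)\le\sum_{\alpha\le\alpha_*}$ over a geometric grid, which is at most $1/4$ uniformly in $\alpha_0$. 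Monotonicity of $z_\alpha$ in $\alpha$ lets us stop at the first $k$ with $\alpha_k\ge\alpha_*$, since $z_{\alpha_k}\ge z_{\alpha_*}$. Set $L(s)=L_0+\sum_{k<K}A_{\alpha_k}$, which is deterministic. Explosion before that time is included in the event, because every $G$ contains $\{T\le\cdot\}$. Combining the two failure bounds gives probability at least $1/2$.
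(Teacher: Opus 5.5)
Your proof is correct. Its second half is exactly the paper's argument: Lemma \ref{lem:boost} is iterated along a geometric grid at deterministic times, you condition at the first failed amplification, and you sum the $\epsilon$'s.

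The difference is your seeding step, which the paper does not need. On $\{s<T\}$ one has $Y_s\ge z_\beta$ automatically for any $\beta$ with $\beta^{-3/2}\ge s$. This is because $z_\beta$ vanishes on $[0,\beta^{-3/2}]\supseteq[0,s]$ and $Y_s(r)=\infty$ for $r>s$. You wrote down both facts but did not combine them. The paper therefore takes $j$ minimal with $(\alpha_*\kappa^{-j})^{-3/2}\ge s$ and starts at $\beta_0=\alpha_*\kappa^{-j}$ at time $s$ itself. It then runs $\beta_i=\alpha_*\kappa^{i-j}$ up to exactly $\beta_j=\alpha_*$, so the failure sum is literally $\sum_{i=1}^{j}\epsilon(\alpha_*\kappa^{-i})\le 1/2$. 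No comparison of grids is needed. In your version, with an arbitrary $\alpha_0$, the bound $\sum_{k<K}\epsilon(\kappa^k\alpha_0)\le\sum_{i\ge0}\epsilon(\alpha_*\kappa^{-i})$ uses that $\epsilon$ is increasing on $(0,\alpha_*]$. That holds for small $\alpha_*$, but you should state it.

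Your growth-based seeding can be made rigorous. The bound $S_t\ge\frac\lambda2\Pp(\max_{r\le t}W_r\le 0)\ge c(1+t)^{-1/2}$ is right, since $Y_t\ge 0$ on $[0,t]$. Lemma \ref{lem:increment} then applies with a deterministic $\gamma\asymp(s+L_0)^{-1/2}$. Two adjustments are needed, however:
\begin{itemize}
\item The blocks must have length of order $\gamma^{-1}\log L_0$, not unit order, or the per-block success probability is tiny.
\item The range $r\in[L_0,s+L_0]$ must be handled through the linear branch $z_{\alpha_0}(r)\le\alpha_0(s+L_0)$.
\end{itemize}
Even so, the seeding buys nothing. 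Growth is only forced by your self-imposed requirement $L_0\ge\alpha_0^{-3/2}$. Choosing instead $\alpha_0^{-3/2}\ge s$ lets you take $L_0=0$, which recovers the paper's proof with the whole probability budget $1/2$ available for the amplification.
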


\begin{proof}
The error bound in Lemma \ref{lem:boost} satisfies
\[
\sum_{i\ge1}\epsilon(\alpha_*\kappa^{-i})
\le C_1\sum_{i\ge1}
 e^{-c_1\alpha_*^{-1/3}\kappa^{i/3}}
\longrightarrow0\qquad\text{as }\alpha_*\downarrow0.
\]
Here the polynomial factor in $\epsilon$ is absorbed into the
exponential, and the series on the right is dominated by a summable
series for $\alpha_*\le1$.
Choose $\alpha_*\le\bar\alpha$ so that this sum is at most $1/2$.

Choose the least $j\ge0$ such that
$(\alpha_*\kappa^{-j})^{-3/2}\ge s$, and put
$\beta_i=\alpha_*\kappa^{i-j}$, $0\le i\le j$.
On $\{s<T\}$, we have $Y_s\ge z_{\beta_0}$: for $r\le s$,
$Y_s(r)\ge0=z_{\beta_0}(r)$, and for $r>s$, $Y_s(r)=\infty$.

Define the deterministic times
$s_0=s$ and $s_{i+1}=s_i+A_{\beta_i}$, $0\le i<j$, and set
$L(s)=s_j-s=\sum_{i=0}^{j-1}A_{\beta_i}$.
If $T>s_i$ and $G(s_\ell,\beta_\ell)$ holds for every $\ell<i$,
then Lemma \ref{lem:boost} gives $Y_{s_i}\ge z_{\beta_i}$.
Conditioning at the first failed amplification therefore gives,
on $\{s<T\}$,
\[
\Pp(T>s_j,\ Y_{s_j}\not\ge z_{\alpha_*}\mid\Gcal_s)
\le\sum_{i=0}^{j-1}\epsilon(\beta_i)
=\sum_{i=1}^{j}\epsilon(\alpha_*\kappa^{-i})
\le\tfrac12.
\]
This is \eqref{eq:good}. If $j=0$, the sums are empty and $L(s)=0$.
\end{proof}
The following lemma provides a high Kurtz density given a good history.
\begin{lemma}
\label{lem:profile-density}
Fix a sufficiently small $\alpha_*>0$, write $z=z_{\alpha_*}$, and
set
$b(d)=\Pp(\sup_{r\ge0}(W_r-z(r))\ge d)$ for integers $d\ge1$.
Then $b$ is nonincreasing, $b(1)<1$, and for every $m>0$ there is
$C_m<\infty$, depending on $\alpha_*$ and $m$, such that
\begin{equation}\label{eq:tail}
b(d)\le C_m d^{-m},\qquad d\ge1.
\end{equation}
For every finite $\Gcal$-stopping time $s$, on
$\{s<T,\ Y_s\ge z\}$,
\begin{equation}\label{eq:profile-density}
u_s(x)\ge\lambda[1-b(x_1-H_s)],\qquad x_1>H_s.
\end{equation}
At update times, $u_s$ and $H_s$ refer to the post-update state.
\end{lemma}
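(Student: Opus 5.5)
The statement has three parts: the properties of $b$, the tail bound \eqref{eq:tail}, and the density bound \eqref{eq:profile-density}. I would prove them in that order.

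\emph{Properties of $b$.} Monotonicity is immediate, since the events $\{\sup_r(W_r-z(r))\ge d\}$ decrease in $d$.

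For $b(1)<1$, the complementary event is $\{W_r\le z(r)+1-\epsilon\text{ for all }r\}$, which for an integer walk is $\{W_r\le z(r)\text{ for all } r\}$ up to integer rounding; since $W$ is integer valued, $W_r<z(r)+1$ means $W_r\le\lfloor z(r)\rfloor$ when $z(r)$ is not an integer. It is easier to bound $1-b(1)$ from below by the probability of $\{W_r\le z((r-\Delta_{\alpha_*})_+)\ \forall r\}$, which is dominated by the original event because $z$ is nondecreasing. Lemma \ref{lem:barrier} gives this probability at least $2(1-\varepsilon)\alpha_*>0$.

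\emph{Tail bound \eqref{eq:tail}.} I would split $\sup_r(W_r-z(r))\ge d$ according to the time of crossing, following the proof of Lemma \ref{lem:barrier}.

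For $r\le r_d:=d^{2}/\log^2 d$ or so, use \eqref{eq:walk-maximum}: $\Pp(\max_{v\le r_d}W_v\ge d)\le\exp(-c\log^2 d)$, which is superpolynomially small.

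For $r\ge r_d$, use dyadic blocks $[2^k,2^{k+1}]$ with $2^k\ge r_d$. On such a block $z(r)\ge g(2^{k}-\alpha_*^{-3/2})\gtrsim 2^{k/2}k$, once $2^k$ is large relative to $\alpha_*^{-2}$ so that the linear branch is inactive. Then \eqref{eq:walk-maximum} gives a block bound $\exp(-ck^2)$. Summing over $k\ge\log_2 r_d\approx 2\log_2 d$ gives $\exp(-c\log^2 d)$.

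Both contributions are $\le C_m d^{-m}$ for every $m$. Small $d$ are absorbed into the constant $C_m$.

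\emph{Density bound \eqref{eq:profile-density}.} This is the main step. By Lemma \ref{lem:density} at the stopping time $s$,
\[
u_s(x)=\lambda\Pp_x\bigl(X_r\notin K_{s-r},\ 0\le r\le s\bigr).
\]
Every point of $K_{s-r}$ has first coordinate at most $H_{s-r}$. The event is therefore implied by $X_r\cdot e_1>H_{s-r}$ for all $r\le s$, that is, by
\[
H_s-X_r\cdot e_1<H_s-H_{s-r}=Y_s(r).
\]
Under the rate-two planar walk, the first coordinate is a rate-one symmetric walk, so $X_r\cdot e_1=x_1-W'_r$ with $W'\overset{d}{=}W$ by symmetry. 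The requirement becomes
\[
W'_r-(x_1-H_s)<Y_s(r),\qquad 0\le r\le s.
\]
On $\{Y_s\ge z\}$ this holds as soon as $W'_r-z(r)<x_1-H_s$ for all $r\ge0$. The failure probability is therefore at most $\Pp(\sup_r(W_r-z(r))\ge x_1-H_s)=b(x_1-H_s)$.

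Two details need care. First, at update times $K_s$ should be the post-update set; the stopping-time clause of Lemma \ref{lem:density} supplies exactly this. Second, the inequality is strict versus non-strict at the integer boundary. Here $d=x_1-H_s\ge1$ is an integer, and both $W$ and $H$ are integer valued, so $W'_r-d<Y_s(r)$ follows from $W'_r-z(r)\le d-1<d$, or more simply from $\sup_r(W'_r-z(r))<d$. This is consistent with the definition of $b(d)$ as $\Pp(\sup\ge d)$.
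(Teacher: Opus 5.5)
Your proposal is correct and follows the paper's proof. You prove $b(1)<1$ the same way, using Lemma~\ref{lem:barrier} and monotonicity of $z$. You prove the tail bound the same way, splitting the crossing time into an initial window plus dyadic blocks controlled by \eqref{eq:walk-maximum}. You prove the density bound with the same reversed-walk comparison $X_r\cdot e_1>H_s-z(r)\ge H_{s-r}$ in the historical formula of Lemma~\ref{lem:density}.

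The only cosmetic difference is where you cut. You cut at $r_d=d^2/\log^2 d$ and use blocks $[2^k,2^{k+1}]$; take $k\ge\lfloor\log_2 r_d\rfloor$ so the blocks cover $[r_d,\infty)$. The paper cuts at $d$ and uses blocks $[2^kd,2^{k+1}d]$. Both give $b(d)\le Ce^{-c(\log d)^2}$.
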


\begin{proof}
Monotonicity of $b$ follows from its definition.
Lemma \ref{lem:barrier}, with $\varepsilon=1/2$, and
$z((r-\Delta_{\alpha_*})_+)\le z(r)$ give
\[
1-b(1)
\ge\Pp\bigl(W_r\le z((r-\Delta_{\alpha_*})_+)
                  \text{ for all }r\ge0\bigr)
\ge\alpha_*>0.
\]

Since $g(u)/u\to0$, there are $d_0\ge1$ and $c_0>0$ such that
\[
z(t)=g(t-\alpha_*^{-3/2})\ge c_0\sqrt t\log t,
\qquad \sqrt t\ge\log t,\qquad t\ge d_0.
\]
Fix an integer $d\ge d_0$ and put $q_k=2^kd$.
If $\sup_{r\ge0}(W_r-z(r))\ge d$, then
$W_r>z(r)+d/2$ for some $r\ge0$.
Splitting the possible times into $[0,d]$ and
$[q_k,2q_k]$, $k\ge0$, and using monotonicity of $z$, we obtain
\[
b(d)\le
\Pp\bigl(\max_{r\le d}W_r>d/2\bigr)
+\sum_{k\ge0}\Pp\bigl(\max_{r\le2q_k}W_r>z(q_k)\bigr).
\]
The preceding lower bound on $z$ gives
\[
\min\left\{\frac{z(q_k)^2}{2q_k},z(q_k)\right\}
\ge c(\log d+k)^2.
\]
Thus \eqref{eq:walk-maximum} yields
\begin{align*}
b(d)
\le e^{-cd}+\sum_{k\ge0}e^{-c(\log d+k)^2}
\le e^{-cd}+e^{-c(\log d)^2}\sum_{k\ge0}e^{-ck^2}
\le C e^{-c(\log d)^2}.
\end{align*}
This is bounded by $C_m d^{-m}$ for every $m>0$.
Increasing $C_m$ to cover $1\le d<d_0$ proves \eqref{eq:tail}.

Finally, fix $x_1>H_s$ and put $d=x_1-H_s$.
For the reversed walk $X$ started at $x$, the process
$W_r=x_1-X_r\cdot e_1$ is a rate-one symmetric walk started at $0$.
On $\{\sup_{r\ge0}(W_r-z(r))<d\}$, for $0\le r\le s$,
\[
X_r\cdot e_1=x_1-W_r
>H_s-z(r)
\ge H_s-Y_s(r)=H_{s-r}.
\]
Hence $X_r\notin K_{s-r}$ throughout $[0,s]$.
The historical density formula in Lemma \ref{lem:density} gives
\[
u_s(x)
=\lambda\Pp_x\bigl(X_r\notin K_{s-r}\text{ for }0\le r\le s\bigr)
\ge\lambda[1-b(d)],
\]
which proves \eqref{eq:profile-density}.
\end{proof}

\subsection{The linear lower bound}
In this section we prove the linear lower bound, assuming $2/3<\lambda<\lambda_c$. The main point is to control the time lost after failed applications
of Lemma \ref{lem:boost}. We record the available history profile by
a level: success improves the profile, while failure leaves a weaker
profile from which the argument continues. The rapidly decreasing
failure probabilities allow us to bound the conditional second
moments of the times between successive returns to a fixed good profile
$z_{\alpha_*}$. At each return, the next attempt succeeds with
conditional probability at least $1/2$, and success increases $H$
by a fixed positive amount. A martingale law of large numbers then
shows that successful attempts occur at a positive rate, giving
the required linear lower bound. 
\begin{proof}[Proof of Theorem \ref{thm:linear}]
Fix $2/3<\lambda<\lambda_c$. By Theorem \ref{thm:upper},
$T=\infty$ almost surely. We prove the lower bound in direction $e_1$.

Choose $\alpha_*>0$ sufficiently small and put
$\alpha_j=\alpha_*\kappa^{-j}$ and $A_j=A_{\alpha_j}$ for $j\ge0$.
Let $k(j)$ be the least integer satisfying
\begin{equation}\label{eq:level}
\alpha_{k(j)}^{-3/2}\ge\alpha_j^{-3/2}+A_j.
\end{equation}
We construct stopping times $T_n$ and
$\Gcal_{T_n}$-measurable levels $J_n$ with
$Y_{T_n}\ge z_{\alpha_{J_n}}$.
Start with $T_0=0$ and $J_0=0$; the profile condition holds because
$Y_0(0)=0$ and $Y_0(r)=\infty$ for $r>0$.
Given $J_n=j$, set $T_{n+1}=T_n+A_j$ and
\[
J_{n+1}=
\begin{cases}
(j-1)_+, & \text{on }G(T_n,\alpha_j),\\
k(j), & \text{otherwise}.
\end{cases}
\]
On success, Lemma \ref{lem:boost} gives the required profile;
at $j=0$, use $z_{\kappa\alpha_*}\ge z_{\alpha_*}$.
On failure, \eqref{eq:fallback} and \eqref{eq:level} give, for $r\ge0$,
\[
Y_{T_{n+1}}(r)
\ge z_{\alpha_j}((r-A_j)_+)
\ge z_{\alpha_{k(j)}}(r).
\]
The last inequality follows from $\alpha_{k(j)}<\alpha_j$ and
the larger initial zero interval in $z_{\alpha_{k(j)}}$.
Since $J_n$ has countably many values and
$G(T_n,\alpha_j)\in\Gcal_{T_n+A_j}$, the construction preserves
the asserted stopping-time and measurability properties.

We next bound the time spent at positive levels.
Set $V(j)=\alpha_j^{-6}$.
Minimality of $k(j)$ and $A_j\le2\alpha_j^{-3}$ give, for
$\alpha_j\le1$,
\[
\alpha_{k(j)}^{-3/2}
<\kappa^{3/2}(\alpha_j^{-3/2}+A_j)
\le3\kappa^{3/2}\alpha_j^{-3},
\qquad
V(k(j))\le C_0V(j)^2,
\]
where $C_0=3^4\kappa^6$.
Thus, on $\{J_n=j\}$ with $j\ge1$,
\begin{equation}\label{eq:drift}
\Ee[V(J_{n+1})\mid\Gcal_{T_n}]
\le\kappa^{-6}V(j)+C_0\epsilon(\alpha_j)V(j)^2
\le qV(j),
\qquad q=\frac{1+\kappa^{-6}}2<1.
\end{equation}
For the last inequality, choose $\alpha_*$ so small that
$C_0\epsilon(\alpha)\alpha^{-6}\le(1-\kappa^{-6})/2$
for $0<\alpha\le\alpha_*$.
Also require $\epsilon(\alpha_*)\le1/2$.

Let $\nu$ be a finite stopping time for the discrete filtration
$(\Gcal_{T_n})_{n\ge0}$, and put
$N=\inf\{m\ge0:J_{\nu+m}=0\}$.
On $\{J_\nu=j\}$ with $j\ge1$, iteration of \eqref{eq:drift} gives
\begin{equation}\label{eq:killed}
\Ee[V(J_{\nu+m})\one_{\{m<N\}}\mid\Gcal_{T_\nu}]
\le q^mV(j),\qquad m\ge0.
\end{equation}
In particular,
$\Pp(N>m\mid\Gcal_{T_\nu})\le q^mV(j)/V(1)$, so $N<\infty$
almost surely. Since $A_i^2\le4V(i)$, conditional triangle
inequality gives
\begin{equation}\label{eq:recovery}
\begin{split}
\left(\Ee\left[
\left(\sum_{m<N}A_{J_{\nu+m}}\right)^2
\middle|\Gcal_{T_\nu}\right]\right)^{1/2}
&\le\sum_{m\ge0}
\left(\Ee[A_{J_{\nu+m}}^2\one_{\{m<N\}}
\mid\Gcal_{T_\nu}]\right)^{1/2}\\
&\le\frac{2\sqrt{V(j)}}{1-\sqrt q}.
\end{split}
\end{equation}
Apply triangle inequality first to finite sums and then use monotone convergence.
The bound is uniform over the history at $T_\nu$.

Define $\nu_0=0$,
$\nu_{n+1}=\inf\{m>\nu_n:J_m=0\}$, and $Q_n=T_{\nu_n}$.
Every interval $[Q_n,Q_{n+1}]$ begins with an attempt of length $A_0$.
If this attempt succeeds, $Q_{n+1}=Q_n+A_0$; otherwise it is
followed by a return from level $k(0)$ to zero.
By \eqref{eq:recovery} and the tower property,
\begin{equation}\label{eq:cycle}
\Ee[(Q_{n+1}-Q_n)^2\mid\Gcal_{Q_n}]\le B,
\qquad
B=2A_0^2+\frac{8V(k(0))}{(1-\sqrt q)^2}.
\end{equation}
Hence all $Q_n$ are finite, and $Q_n\ge nA_0\to\infty$.

Put $\mathcal F_n=\Gcal_{Q_n}$,
$\ell_n=Q_{n+1}-Q_n$, and $I_n=\one_{G(Q_n,\alpha_*)}$.
Both $I_n$ and $\ell_n$ are $\mathcal F_{n+1}$-measurable, and
\[
\Ee[I_n\mid\mathcal F_n]\ge\tfrac12,
\qquad
\Ee[\ell_n\mid\mathcal F_n]\le\sqrt B,
\qquad
H_{Q_n}-H_0\ge d_0\sum_{m<n}I_m,
\]
where $d_0=\kappa\alpha_*A_0$.
For either $Z_n=I_n$ or $Z_n=\ell_n$, the centered variables
$Z_n-\Ee[Z_n\mid\mathcal F_n]$ have uniformly bounded second moments.
Their weighted partial sums
$\sum_{m<n}(Z_m-\Ee[Z_m\mid\mathcal F_m])/(m+1)$
are therefore $L^2$-bounded martingales.
The martingale convergence theorem and Kronecker's lemma
\cite[Theorems 4.4.6 and 2.5.9]{Durrett} give
$n^{-1}\sum_{m<n}(Z_m-\Ee[Z_m\mid\mathcal F_m])\to0$
almost surely. Consequently,
\begin{equation}\label{eq:slln}
\liminf_{n\to\infty}\frac1n\sum_{m<n}I_m\ge\frac12,
\qquad
\limsup_{n\to\infty}\frac{Q_n}{n}\le\sqrt B
\quad\text{almost surely}.
\end{equation}

For $Q_n\le t<Q_{n+1}$, monotonicity gives
$H_t/t\ge H_{Q_n}/Q_{n+1}$. Thus \eqref{eq:slln} yields
\[
\liminf_{t\to\infty}\frac{H_t}{t}
\ge\liminf_{n\to\infty}\frac{H_{Q_n}}{Q_{n+1}}
\ge\frac{d_0}{2\sqrt B}>0
\qquad\text{almost surely}.
\]
The same bound holds for $h_{e_1}(t)=H_t-1$.
By rotation and reflection, it holds simultaneously in the four
coordinate directions. Returning to the original rate-one clock
divides the bound by $2$, so we may take
$v_\lambda=d_0/(4\sqrt B)$.
\end{proof}
\section{Supercritical explosion}\label{sec:supercritical}

Lemma \ref{lem:profile-density} controls the density ahead of histories
satisfying $Y_s\ge z_{\alpha_*}$. We first show that a uniform positive
probability of explosion from such histories implies almost-sure explosion.

\begin{lemma}
\label{lem:repetition}
Let $\lambda>2/3$ and fix $\alpha_*$ as in Lemma \ref{lem:good}.
Suppose there are constants $h>0$ and $p\in(0,1]$ such that, for every
deterministic $s\ge0$,
$\Pp(T\le s+h\mid\Gcal_s)\ge p$
on $\{s<T,\ Y_s\ge z_{\alpha_*}\}$.
Then $T<\infty$ almost surely.
\end{lemma}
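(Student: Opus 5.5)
The plan is a conditional Borel--Cantelli (Lévy) argument built from repeated trials. Each trial combines Lemma \ref{lem:good} with the hypothesis. From any deterministic time $s$, Lemma \ref{lem:good} gives a deterministic $L(s)$ such that, on $\{s<T\}$, with conditional probability at least $1/2$ one of two things happens: either $T\le s+L(s)$, or $T>s+L(s)$ and $Y_{s+L(s)}\ge z_{\alpha_*}$. In the second case we apply the hypothesis at the deterministic time $s'=s+L(s)$. This gives $\Pp(T\le s'+h\mid\Gcal_{s'})\ge p$ on that event. Combining the two cases by the tower property gives
\[
\Pp\bigl(T\le s+L(s)+h\mid\Gcal_s\bigr)\ge \tfrac12\, p\qquad\text{on }\{s<T\}.
\]
The event $\{T\le s'\}$ already counts as success, so both branches contribute.

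Next we define deterministic trial times recursively: $s_0=0$ and $s_{n+1}=s_n+L(s_n)+h$. These are deterministic because $L(\cdot)$ is deterministic, so the hypothesis and Lemma \ref{lem:good}, which are stated only at deterministic times, apply directly. Let $E_n=\{T\le s_{n+1}\}\in\Gcal_{s_{n+1}}$. Iterating the bound above gives
\[
\Pp\Bigl(T>s_{n}\Bigr)=\Ee\Bigl[\one_{\{T>s_{n-1}\}}\Pp(T>s_n\mid\Gcal_{s_{n-1}})\Bigr]\le(1-p/2)\,\Pp(T>s_{n-1}),
\]
hence $\Pp(T>s_n)\le(1-p/2)^n$. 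We need care that $\{T>s_{n-1}\}\in\Gcal_{s_{n-1}}$; this holds because $\Gcal$ records explosion and is right-continuous.

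Then $\Pp(T=\infty)\le\Pp(T>s_n)\to0$. This conclusion needs no summability of the $s_n$; each $s_n$ only has to be finite, which it is. So $T<\infty$ almost surely.

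The main technical point I expect is measurability and conditioning on the event $\{s<T\}$. The statements of Lemma \ref{lem:good} and of the hypothesis hold only on $\{s<T\}$ (respectively $\{s<T,\ Y_s\ge z_{\alpha_*}\}$), and $Y_{s'}$ is defined only before $T$. To handle this I would write the success event at each stage as $\{T\le s'\}\cup\{T>s',\ Y_{s'}\ge z_{\alpha_*}\}$. That event is $\Gcal_{s'}$-measurable. On its second part we use the hypothesis, and on $\{T\le s'\}$ the inequality is trivial. No strong Markov property is needed beyond the conditional statements at deterministic times, since all the time points are deterministic.
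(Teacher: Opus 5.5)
Your proposal is correct and matches the paper's proof essentially step for step. The paper likewise applies the hypothesis at the deterministic time $r=s+L(s)$, writes $\Pp(T\le r+h\mid\Gcal_r)\ge\one_{\{T\le r\}}+p\one_{\{T>r,\,Y_r\ge z_{\alpha_*}\}}$, and uses the tower property, $p\le1$ and Lemma~\ref{lem:good} to get the bound $p/2$ on $\{s<T\}$; it then iterates along the same deterministic times $s_{n+1}=s_n+L(s_n)+h$ to get $\Pp(T>s_n)\le(1-p/2)^n$, so the ``conditional Borel--Cantelli'' label is unnecessary, since the argument is just this geometric iteration.
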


\begin{proof}
Fix a deterministic $s\ge0$, put $r=s+L(s)$, and define
$B_r=\{T>r,\ Y_r\ge z_{\alpha_*}\}$.
Since $\{T\le r\}\in\Gcal_r$, the hypothesis gives
\(
\Pp(T\le r+h\mid\Gcal_r)
\ge \mathbf1_{\{T\le r\}}+p\mathbf1_{B_r}.
\)
As $\Gcal_s\subseteq\Gcal_r$, on $\{s<T\}$, the tower property, $p\le1$ and Lemma \ref{lem:good} yield 
\begin{align*}
\Pp(T\le r+h\mid\Gcal_s)
&=\Ee\bigl[\Pp(T\le r+h\mid\Gcal_r)\mid\Gcal_s\bigr]\\
&\ge \Pp(T\le r\mid\Gcal_s)+p\,\Pp(B_r\mid\Gcal_s)\\
&\ge p\,\Pp(\{T\le r\}\cup B_r\mid\Gcal_s)
\ge p/2
.\end{align*}

Define $s_0=0$ and $s_{n+1}=s_n+L(s_n)+h$.
These times are deterministic and $s_n\ge nh$.
Since $\{T>s_n\}\in\Gcal_{s_n}$ and
$\{T>s_{n+1}\}\subseteq\{T>s_n\}$, another application of
the tower property gives
\begin{align*}
\Pp(T>s_{n+1})
=\Ee\bigl[
\mathbf1_{\{T>s_n\}}
\Pp(T>s_{n+1}\mid\Gcal_{s_n})
\bigr]
\le(1-p/2)\Pp(T>s_n).
\end{align*}
Thus $\Pp(T>s_n)\le(1-p/2)^n\to0$, proving
$\Pp(T=\infty)=0$.
\end{proof}

For the supercritical explosion, we first prove $T<\infty$ and then exclude accumulation before an
infinite engulfment. A good history gives particle density close to
$\lambda$ at a constant distance ahead of the aggregate. Particles that
remain stationary over a short interval therefore dominate
supercritical percolation there. A finite occupied connection and
one prescribed jump give a uniform positive probability to link the aggregate to the infinite component, which causes explosion;
Lemma \ref{lem:repetition} then yields almost-sure explosion.
To identify its cause, the finite deficit estimate allows us to
complete a suitable stationary field to homogeneous supercritical
percolation by adding only finitely many particles. The original
stationary field consequently has an infinite cluster with finite
complementary components. Until contact with this cluster, the
aggregate remains in one such finite component, where infinitely
many updates are impossible. Contact causes an infinite engulfment,
proving $T=T_I$. See Figure \ref{fig:supercritical-proof} for visualization of the notations and proof idea. 

\begin{figure}[htbp]
\centering
\begingroup
\definecolor{PMblue}{HTML}{315E83}
\definecolor{PMgreen}{HTML}{24784C}
\definecolor{PMorange}{HTML}{C65B16}
\definecolor{PMgrey}{HTML}{69757D}
\begin{tikzpicture}[
  x=1cm,y=1cm,font=\small,
  line cap=round,line join=round,
  >={Stealth[length=2mm,width=1.4mm]},
  stationary/.style={draw=PMgreen,line width=1.1pt},
  note/.style={align=left,anchor=north west,inner sep=0pt},
  lattice/.style={draw=black!9,line width=.2pt}
]
\path[use as bounding box] (-.1,-.15) rectangle (15.8,11.05);

\node[anchor=west,font=\small\bfseries] at (0,10.8)
  {(a) Connecting to an infinite stationary component};
\fill[PMgreen!6] (5.6,7.15) rectangle (10.25,10.25);
\draw[PMgreen!60,densely dashed] (5.6,7.15)--(5.6,10.25);
\node[anchor=north,font=\footnotesize,text=PMgreen!80!black]
  at (7.9,10.23) {$\{x:x_1\ge H_s+M\}$};

\begin{scope}[shift={(2.6,8.65)},x=.6cm,y=.6cm]
  \foreach \x/\y in {-4/0,-3/-1,-3/1,-2/-2,-2/2,
                       -1/-2,-1/2,0/-1,0/1,1/0} {
    \fill[PMblue!8] (\x-.48,\y-.48) rectangle (\x+.48,\y+.48);
    \draw[PMblue!45,line width=.4pt]
      (\x,\y) circle[radius=.08];
  }
  \foreach \x/\y in {-3/0,-2/-1,-2/0,-2/1,-1/-1,-1/0,-1/1,0/0}
    \path[fill=PMblue!80,draw=white,line width=.5pt]
      (\x-.48,\y-.48) rectangle (\x+.48,\y+.48);
\end{scope}
\node[text=white,font=\normalsize] at (1.65,8.65) {$\mathcal E_s$};
\node[anchor=west,font=\footnotesize,align=left,text=PMblue!90!black]
  at (.25,10.4) {$K_s=\mathcal E_s\cup\partial^o\mathcal E_s$};
\draw[PMblue!70,line width=.5pt] (1.25,10.23)--(1.4,9.93);

\draw[PMgrey,densely dashed] (3.2,7.15)--(3.2,10.05);
\draw[stationary] (3.8,8.65)--(5.6,8.65);
\draw[PMblue,fill=white,line width=.9pt] (3.2,8.65) circle[radius=.075];
\foreach \x in {3.8,4.4,5.0,5.6}
  \fill[PMgreen] (\x,8.65) circle[radius=.07];
\node[below=4pt] at (3.2,8.65) {$U$};
\node[below=4pt] at (3.8,8.65) {$x_1$};
\node[below=4pt] at (4.4,8.65) {$x_2$};
\node[below=4pt] at (5.0,8.65) {$\cdots$};
\node[below=4pt] at (5.6,8.65) {$x_M$};
\node[font=\footnotesize] at (4.4,7.65) {$x_j=U+je_1$};
\node[below,font=\footnotesize] at (3.2,7.15) {$x_1=H_s$};
\node[below,font=\footnotesize] at (5.6,7.15) {$x_1=H_s+M$};

\draw[PMorange,fill=white,line width=1pt]
  (3.8,8.89) circle[radius=.065];
\draw[PMorange,->,line width=1pt]
  (3.76,8.95) to[out=140,in=80] (3.2,8.74);
\node[font=\footnotesize,align=center,text=PMorange!90!black]
  at (4.25,9.62) {first jump $x_1\to U$\\by time $s+h$};

\draw[stationary] (5.6,8.65)--(6.2,8.65)--(6.2,9.25)--(7.4,9.25)
  --(7.4,8.65)--(8.6,8.65)--(8.6,9.25)--(9.2,9.25);
\draw[stationary,->] (9.2,9.25)--(10.0,9.25);
\draw[stationary] (6.8,9.25)--(6.8,9.85);
\draw[stationary] (7.4,8.65)--(7.4,8.05)--(6.2,8.05)--(6.2,7.45);
\draw[stationary] (8.0,8.65)--(8.0,8.05)--(9.2,8.05);
\draw[stationary,->] (9.2,8.05)--(10.0,8.05);
\foreach \x/\y in {6.2/8.65,6.2/9.25,6.8/9.25,7.4/9.25,
  7.4/8.65,8.0/8.65,8.6/8.65,8.6/9.25,9.2/9.25,
  6.8/9.85,7.4/8.05,6.8/8.05,6.2/8.05,6.2/7.45,
  8.0/8.05,8.6/8.05,9.2/8.05}
  \fill[PMgreen] (\x,\y) circle[radius=.055];
\node[text=PMgreen,anchor=west] at (10.02,9.25) {$\infty$};
\node[text=PMgreen,anchor=west] at (10.02,8.05) {$\infty$};

\node[note,text width=4.65cm] at (10.95,10.15)
  {At a good history,\\$s<T$, $Y_s\ge z_{\alpha_*}$,\\[3pt]
   $u_s(x)\ge\lambda[1-b(x_1-H_s)]$\\for $x_1>H_s$.};
\fill[PMgreen] (11.05,8.03) circle[radius=.065];
\node[anchor=west,font=\footnotesize,align=left] at (11.3,8.03)
  {Particle with no jumps\\in $(s,s+h]$};
\draw[PMorange,fill=white,line width=1pt]
  (11.05,7.3) circle[radius=.065];
\node[anchor=west,font=\footnotesize,align=left] at (11.3,7.3)
  {Additional particle at $x_1$\\with a prescribed first jump};
\node[note,font=\footnotesize,text width=4.65cm] at (10.95,6.68)
  {$U\in\partial^o\mathcal E_s$ is vacant.\\
   $x_1$ stays occupied at the jump.};
\node[anchor=west,font=\footnotesize] at (.25,6.34)
  {$q_j=1-b(j),\qquad p_M=1-\exp\{-\lambda q_Me^{-2h}\}>p_c$};
\node[anchor=west,font=\footnotesize] at (.25,5.89)
  {$\mathbb P(T\le s+h\mid\mathcal G_s)\ge p>0$ on the good-history event.};

\draw[black!18,line width=.4pt] (0,5.48)--(15.6,5.48);

\node[anchor=west,font=\small\bfseries] at (0,5.1)
  {(b) Why finite engulfments cannot accumulate before contact};
\begin{scope}[shift={(3.5,2.55)},x=.55cm,y=.55cm]
  \fill[PMblue!5]
    (-4,-2)--(-3,-2)--(-3,-3)--(2,-3)--(2,-2)--(4,-2)
    --(4,2)--(3,2)--(3,3)--(-2,3)--(-2,2)--(-4,2)--cycle;
  \draw[lattice,step=1] (-4,-3) grid (4,3);
  \draw[stationary,line width=1.4pt]
    (-4,-2)--(-3,-2)--(-3,-3)--(2,-3)--(2,-2)--(4,-2)
    --(4,2)--(3,2)--(3,3)--(-2,3)--(-2,2)--(-4,2)--cycle;
  \foreach \x/\y in {-4/-2,-3/-2,-3/-3,-2/-3,-1/-3,0/-3,1/-3,2/-3,
    2/-2,3/-2,4/-2,4/-1,4/0,4/1,4/2,3/2,3/3,2/3,1/3,0/3,
    -1/3,-2/3,-2/2,-3/2,-4/2,-4/1,-4/0,-4/-1}
    \fill[PMgreen] (\x,\y) circle[radius=.085];
  \draw[stationary,->] (-4,1)--(-5,1)--(-5,2)--(-6,2);
  \draw[stationary,->] (4,1)--(5,1)--(5,2)--(6,2);
  \draw[stationary,->] (1,-3)--(1,-4)--(3,-4);
  \node[anchor=east,text=PMgreen] at (-6,2) {$\infty$};
  \node[anchor=west,text=PMgreen] at (6,2) {$\infty$};
  \foreach \x/\y in {-1/0,0/0,0/1,1/0}
    \path[fill=PMblue!80,draw=white,line width=.5pt]
      (\x-.44,\y-.44) rectangle (\x+.44,\y+.44);
  \draw[PMblue,dashed,line width=.8pt]
    (-1.5,-1.5)--(1.5,-1.5)--(1.5,-.5)--(2.5,-.5)
    --(2.5,1.5)--(-.5,1.5)--(-.5,.5)--(-1.5,.5)--cycle;
  \node[font=\small,text=white] at (0,0) {$\mathcal E_a$};
  \node[font=\small,text=PMblue] at (2,1) {$\mathcal E_t$};
  \node[font=\normalsize,text=PMblue] at (-2.65,-.6) {$D_a$};
  \node[font=\normalsize,text=PMgreen,fill=white,inner sep=2pt]
    at (3,3.1) {$\mathcal I_a$};
\end{scope}

\node[note,text width=7.95cm] at (7.65,4.6)
  {$1-\exp\{-\lambda e^{-2h_0}\}>p_c$, $\quad a=nh_0<T$.\\[3pt]
   $\mathcal I_a$: infinite component formed by particles with no jumps
   in $(a,a+h_0]$.};
\node[note,text width=7.95cm] at (7.65,3.24)
  {$D_a$ is the finite component of
   $\mathbb Z^2\setminus\mathcal I_a$ containing $\mathcal E_a$.};
\node[note,text width=7.95cm] at (7.65,2.35)
  {Before contact in $(a,a+h_0]$, $\mathcal E_t\subset D_a$.\\
   At most $|D_a\setminus\mathcal E_a|$ finite updates can occur.};
\node[note,text width=7.95cm] at (7.65,1.2)
  {First contact engulfs $\mathcal I_a$.\\
   Thus finite updates cannot accumulate first.};
\node[anchor=west,font=\footnotesize] at (.25,.07)
  {A finite Poisson completion and finite-modification stability give $\mathcal I_a$ and $|D_a|<\infty$.};
\end{tikzpicture}
\endgroup
\caption{The two parts of the proof of
Theorem \ref{thm:supercritical}.
(a) Stationary particles connect $x_1$ to an infinite occupied
component in the forward half-plane. An additional particle jumps
from $x_1$ to the vacant boundary site $U$, forcing explosion unless
it has already occurred.
(b) During $(a,a+h_0]$, the aggregate remains in the finite component
$D_a$ until it contacts the stationary infinite cluster
$\mathcal I_a$. Each finite update adds a vertex of $D_a$, whereas
contact engulfs $\mathcal I_a$. Thus finite updates cannot accumulate
before an infinite engulfment.}
\label{fig:supercritical-proof}
\end{figure}

\begin{proof}[Proof of Theorem \ref{thm:supercritical}]
Fix $\lambda>\lambda_c$. Since $\lambda_c>2/3$, choose $\alpha_*>0$
so that Lemmas \ref{lem:good} and \ref{lem:profile-density} apply.
Put $q_j=1-b(j)$ for $j\ge1$.
Then $q_1>0$ and $q_j\uparrow1$.
Choose a deterministic $M\ge1$ with $\lambda q_M>\lambda_c$,
and then $h>0$ such that
$p_M=1-\exp\{-\lambda q_Me^{-2h}\}>p_c$.

Fix a deterministic $s\ge0$ and work on
$\{s<T,\ Y_s\ge z_{\alpha_*}\}$.
Choose the lexicographically first vertex $U\in K_s$ with $U_1=H_s$,
and put $x_j=U+je_1$.
Since $U$ is rightmost in $K_s$, it belongs to $\partial^o\Ecal_s$.

For each $x$, let $N^0_{s,h}(x)$ count particles free at $x$ at time $s$
whose prescribed walks make no jump in $(s,s+h]$.
Let $N^-_{s,h}(x)$ count those whose first jump after $s$ occurs by
$s+h$ and has direction $-e_1$.
Conditional on $\Gcal_s$, all these counts are independent Poisson
variables, with respective means
$u_s(x)e^{-2h}$ and $u_s(x)(1-e^{-2h})/4$;
see Lemma \ref{lem:density} and \cite[Theorem 5.8]{LP}.

Write $S_{s,h}=\{x:N^0_{s,h}(x)\ge1\}$.
By Lemma \ref{lem:profile-density}, this set dominates Bernoulli
site percolation of parameter $p_M$ in
$\{x:x_1\ge H_s+M\}$.
Let $F_s$ be the event that $x_M$ belongs to an infinite component of
$S_{s,h}\cap\{x:x_1\ge H_s+M\}$, that
$x_1,\ldots,x_{M-1}\in S_{s,h}$, and that
$N^-_{s,h}(x_1)\ge1$.
The three requirements involve independent Poisson counts.
Therefore, on the good-history event,
\begin{equation}\label{eq:super-trial}
\Pp(F_s\mid\Gcal_s)\ge
\theta_H(p_M)
\left[1-e^{-\lambda q_1(1-e^{-2h})/4}\right]
\prod_{j=1}^{M-1}\left[1-e^{-\lambda q_je^{-2h}}\right]
=:p>0.
\end{equation}
For $M=1$, the product is $1$.

We claim that $F_s$ forces $T\le s+h$.
Suppose instead that $F_s$ occurs and $T>s+h$.
The segment $x_1,\ldots,x_M$ and the infinite component containing
$x_M$ form an infinite connected set $S\subset S_{s,h}$.
No vertex of $S$ can be absorbed by time $s+h$: at the first such
update, all its stationary particles would still be free, so the
engulfed component would contain $S$ and be infinite.
Since $x_1\in S$ is adjacent to $U$, the same argument prevents $U$
from being absorbed. Thus $U$ remains on the outer boundary.
A particle counted by $N^-_{s,h}(x_1)$ stays at $x_1$ until its first
jump and cannot be absorbed before that jump, since this would also
absorb the stationary particle at $x_1$.
Its jump into $U$ therefore engulfs $S$, a contradiction.
Consequently, \eqref{eq:super-trial} and Lemma \ref{lem:repetition}
give $T<\infty$ almost surely.

It remains to exclude accumulation before the first infinite
engulfment, whose time is denoted by $T_I$.
We will show that, during each short deterministic interval,
particles that do not move form an infinite component with only
finite complementary components. The aggregate either engulfs this
infinite component or stays inside one of the finite complementary
components, where only finitely many updates are possible.

The same $h$ as above satisfies
$p_0=1-\exp\{-\lambda e^{-2h}\}>p_c$.
Fix a deterministic $a=nh$, $n\ge0$, and work on $\{a<T\}$.
Recall that $S_{a,h}$ consists of the sites occupied at time $a$
by free particles whose prescribed walks make no jump in $(a,a+h]$.
We first verify that $S_{a,h}$ has the required percolation property.

Conditional on $\Gcal_a$, its particle counts $N^0_{a,h}(x)$ are
independent Poisson variables with means $u_a(x)e^{-2h}$.
For comparison, take independent Poisson variables $Z_a(x)$ with means
$(\lambda-u_a(x))e^{-2h}$, independent of these counts conditional
on $\Gcal_a$. By \eqref{eq:deficit},
\[
\Ee\left[\sum_x Z_a(x)\middle|\Gcal_a\right]
=e^{-2h}\sum_x(\lambda-u_a(x))
\le\lambda e^{-2h}|K_a|(1+2a)<\infty.
\]
Hence $\sum_xZ_a(x)<\infty$ almost surely.
The occupied set $\{x:N^0_{a,h}(x)+Z_a(x)\ge1\}$ is Bernoulli
site percolation of parameter $p_0$.
It differs from $S_{a,h}$ at only finitely many sites.
By the finite-modification assertion of Lemma \ref{lem:percolation},
$S_{a,h}$ therefore has an infinite component $\mathcal I_a$ such
that every nearest-neighbor component of
$\Z^2\setminus\mathcal I_a$ is finite, almost surely on $\{a<T\}$.

Since $S_{a,h}\cap\Ecal_a=\emptyset$ and $\Ecal_a$ is connected,
it lies in a single finite component $D_a$ of
$\Z^2\setminus\mathcal I_a$.
Suppose an update in $(a,a+h]$ adds a vertex of $\mathcal I_a$,
and consider the first such update.
Immediately beforehand, every vertex of $\mathcal I_a$ is still
occupied by a free particle counted by $N^0_{a,h}$: these particles
have not moved, and none has been absorbed.
The occupied component engulfed at that update therefore contains
all of $\mathcal I_a$ and is infinite.

Consequently, on $\{T<T_I,\ a<T\le a+h\}$, the aggregate avoids
$\mathcal I_a$ for every $a\le t<T$.
As it is connected and contains $\Ecal_a$, it remains inside the
same finite set $D_a$ throughout this interval.
Every update adds at least one vertex, so at most
$|D_a\setminus\Ecal_a|$ updates can occur after $a$ and before $T$.
This contradicts accumulation at $T$, proving
$\Pp(T<T_I,\ a<T\le a+h)=0$.

On $\{T<T_I\}$, the initial engulfment is finite and the first
holding time is positive, so $T>0$.
Since $T<\infty$ almost surely, taking the union over
$a=nh$, $n\ge0$, gives $\Pp(T<T_I)=0$.
Therefore $T=T_I<\infty$ almost surely.
\end{proof}


\section{Explosion at criticality}\label{sec:critical}

At $\lambda_c$, the free field has no infinite occupied component at
any finite-index update. Explosion must therefore occur through an
accumulation of finite updates. A planar contact estimate will give a
uniform chance of this accumulation from the good histories constructed
in Section \ref{sec:linear}.

\begin{lemma}[Finite updates at criticality]\label{lem:finite-update}
If $\lambda\le\lambda_c$, every finite-index engulfment is finite
almost surely, and $T_I=\infty$.
The same holds for any finite initial aggregate with independent
Poisson free counts of means at most $\lambda_c$ and a vacant buffer.
\end{lemma}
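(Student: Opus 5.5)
We argue by induction on the update index, using the conditional Poisson description of Lemma \ref{lem:density} and the entrance law of the subsequent remark. The comparison at each update is the one already used in the proof of Theorem \ref{thm:upper}.

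\emph{Initial engulfment.} The occupied set $O_0$ is Bernoulli site percolation with parameter $1-e^{-\lambda}\le 1-e^{-\lambda_c}=p_c$. By Lemma \ref{lem:percolation}(1)--(2), all open components are almost surely finite, also at $p_c$. Hence $\Ecal_0$, which is $\{0\}$ together with at most four components, is finite almost surely. In the generalized setting the vacant buffer means there is no initial engulfment, so this step is empty.

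\emph{Inductive step.} Suppose the first $k$ engulfments are finite, so that on $\{\tau_k<T\}$ the aggregate $\Ecal_{\tau_k}$ is finite and $K_{\tau_k}$ is vacant. Condition on the first entrance time $\tau_{k+1}$ and on its directed edge $y\to z$. By the remark after Lemma \ref{lem:density}, the remaining free particles just before the jump are independent Poisson with means $u_{\tau_{k+1}-}(x)\le\lambda$, plus one extra particle at $z$. For the generalized version, the inhomogeneous form of Lemma \ref{lem:density} gives the same bound $u\le\lambda_c$.

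The occupied set is therefore an independent site field with probabilities at most $p_c$. Couple it below a Bernoulli$(p_c)$ field $\omega$. The engulfed set $C(z,O^-_{\tau_{k+1}})$ is then contained in $\{z\}\cup\bigcup_{w\sim z}C(w,\omega)$. Each $C(w,\omega)$ is finite almost surely by Lemma \ref{lem:percolation}(2).

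Some care is needed because $z$ and the conditioning edge are random. The coupling is done conditionally on $\Gcal_{\tau_{k+1}-}$ and on the edge. The probability that some neighbor cluster is infinite is then $0$ for each fixed $(y,z)$, and we average over the countably many choices. If $\tau_{k+1}=\infty$ there is nothing to prove. This gives finiteness of the $(k+1)$-st engulfment, which completes the induction.

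\emph{Conclusion.} The statement $T_I=\infty$ follows at once. An infinite engulfment would have to occur at some finite index $k$, since $T_I$ is defined as the time of such an update. A countable union over $k$ of null events is null. Updates accumulating at $T$ are not infinite engulfments, so $T_I=\infty$ even on $\{T<\infty\}$.

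The only delicate point is the inductive step: justifying that, at the random entrance time, the pre-jump field is still a product Poisson field with means bounded by $\lambda_c$. This requires the Palm-type statement for the entering particle (the identity $\Ee[N(y)h(N)]=u(y)\Ee[h(N+\delta_y)]$) rather than the fixed-time law. I would invoke the entrance-law remark directly, exactly as in the proof of Theorem \ref{thm:upper}, and everything else is monotone coupling.
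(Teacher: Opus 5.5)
Your proposal is correct and follows essentially the same argument as the paper. The initial occupied field is dominated by critical percolation. At each entrance, the Palm background from Lemma \ref{lem:density} and the entrance-law remark is coupled below $\Pp_{p_c}$, so the engulfed set lies in $\{z\}$ together with at most four almost surely finite critical clusters. Induction over the countably many update indices then gives $T_I=\infty$. Your version spells out the conditioning on the random entrance edge and the Palm identity, which the paper's three-line proof leaves implicit.
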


\begin{proof}
Initially the occupied field is dominated by critical site percolation.
The seed touches only finitely many of its finite clusters.
At an entrance, Lemma \ref{lem:density} couples the independent Palm
background below critical percolation. Opening the triggering vertex
joins at most four finite background clusters. Induction over the
countably many finite update indices proves the claim.
\end{proof}

\subsection{Critical contacts}

The next estimate is uniform over the finite connected obstacle.

\begin{lemma}[Critical contacts]\label{lem:contacts}
There are universal $c,\delta,C_0>0$ as follows.
Let $F$ be finite and nearest-neighbor connected, let $v$ be rightmost
in $F$, and suppose the horizontal span of $F$ is at least $10r$.
Here the horizontal span is $\max_{x\in F}x_1-\min_{x\in F}x_1$.
Put $I=\{z\notin F:d_\infty(z,F)\le C_0\}\cap\Lambda_{5r}(v)$.
Let $A_z$ be the event of an open connection from $z$ to
$\{x:x_1\ge v_1+r-C_0\}$ inside $F^c\cap\Lambda_{5r}(v)$.
For sufficiently large $r$,
\begin{equation}\label{eq:contacts}
 \sum_{z\in I}\Pp_{p_c}(A_z)\ge cr^\delta .
\end{equation}
\end{lemma}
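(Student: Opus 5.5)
The plan is to lower-bound the sum by a first-moment count over dyadic scales. At each scale I show that, with probability bounded below, some set of about $k^{\delta}$ sites of $I$ is simultaneously connected to the far line. The first thing to record is the geometry of $F$ near $v$. Since $v$ is rightmost and the span of $F$ is at least $10r$, the connected set $F$ contains a nearest-neighbor path from $v$ to a site with first coordinate at most $v_1-10r$. That path must exit $\Lambda_{5r}(v)$ through its left, top or bottom face. Every $z\in F^c$ adjacent to this path lies in $I$ (take $C_0\ge1$), and inside $\Lambda_{5r}(v)$ the region $F^c$ lies to the right of and around this path.

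The key estimate is the following. I would fix a scale $k$ with $C_0\le k\le r$ and the box $B_k=\Lambda_k(v)$. I claim that, with probability at least $c>0$ under $\Pp_{p_c}$, there is an open path in $F^c\cap\Lambda_{5r}(v)$ from $\partial^o F\cap B_{2k}$ to $\{x_1\ge v_1+r-C_0\}$. To build it, I use Lemma \ref{lem:percolation}(2), i.e.\ critical RSW, to produce with uniformly positive probability the following configuration:
\begin{itemize}
\item an open horizontal crossing of the rectangle $[v_1+k,v_1+r]\times[v_2-k,v_2+k]$, extended by open crossings of dyadically growing rectangles, so that the path reaches $x_1\ge v_1+r-C_0$ while staying inside $\Lambda_{5r}(v)$;
\item an open circuit in the annulus $B_{2k}\setminus B_k$.
\end{itemize}
This region lies to the right of $v$ and hence in $F^c$, except that the circuit may meet $F$. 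The circuit is handled as follows: the portion of the annulus lying in $F^c$ contains a path that starts next to $F$, because the annulus meets the path from $v$. An open arc of the circuit, taken up to its first hit of $F$, therefore ends at a site of $\partial^o F\subset I$. Since the circuit crosses the horizontal crossing, that site is joined to $\{x_1\ge v_1+r-C_0\}$ in $F^c$. Harris's inequality combines the increasing events.

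To turn this into a polynomial sum, I would not count just one point per scale. I would use closed matching arm bounds in the other direction to show that the number of contact sites $z\in I\cap(B_{2k}\setminus B_k)$ with $A_z$ has expectation at least $ck^{\delta}$. The idea is that the open circuit touches $F$ along many disjoint excursions. Alternatively, summing over the $\log r$ dyadic scales gives $c\log r$, which is too weak, so the polynomial gain must come from within the largest scale $k\asymp r$. There I would consider the $\asymp r/C_0$ sites $z\in I$ along the path from $v$ within $\Lambda_{r}(v)$. Each such $z$ connects, with probability at least $c(\operatorname{dist})^{-\beta}$ by a one-arm half-plane lower bound, to distance $r$, after which the RSW crossings carry the connection to the right. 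The estimate $\sum_z \Pp(A_z)\gtrsim r\cdot r^{-\beta}$ with $\beta<1$ yields $cr^{\delta}$ with $\delta=1-\beta$. The exponent bound $\beta<1$ comes from the fact that the arm event near the boundary of $F$ dominates a half-plane arm. The half-plane one-arm exponent is $1/3$ for site percolation on the triangular lattice, but on $\Z^2$ one can only use that it is strictly less than $1$. That follows from a standard argument: $\Ee\,|\{z\in\text{segment of length }r:z\leftrightarrow\text{distance }r\}|\ge c$, combined with quasi-multiplicativity.

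The main obstacle is precisely this half-plane arm estimate with exponent strictly less than $1$, uniformly over arbitrary connected obstacles $F$, since $F$ can be much more ramified than a half-plane. The first approach I would try is to replace $F$ by the boundary of the unbounded component of $F^c$ near $v$, which is itself a connected curve, and then apply a circuit argument. By RSW, an open path in $\Lambda_{5r}(v)\setminus F$ from the scale-$r$ crossing reaches that curve with probability at least $c$. Its last entrance point is distributed over the $\gtrsim r$ sites of $I$. A second-moment or multiplicity argument then shows that at least $r^{\delta}$ of these sites are connected with positive probability, for instance via the fact that an open crossing hugging a boundary of length $r$ has $\gtrsim r^{\delta}$ touching points, which is controlled using the closed matching arm bound $Cr^{-\delta}$.
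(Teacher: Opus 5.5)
\textbf{There is a genuine gap: the factor $r^{\delta}$ is never actually produced.}

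The per-site route fails as stated. You assert that each contact $z$ along the path from $v$ has $\Pp_{p_c}(A_z)\ge c\,r^{-\beta}$, because the arm event near $F$ ``dominates a half-plane arm''. The domination goes the wrong way. $F$ is an arbitrary finite connected set, and seen from $z$ the domain $F^c\cap\Lambda_{5r}(v)$ may be a long width-one channel between the teeth of a comb. There $\Pp_{p_c}(A_z)$ is exponentially small. Only the sum over $I$ admits a uniform lower bound, not its individual terms.

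Even for a true half-plane, $\Ee|\{z:z\leftrightarrow\text{distance }r\}|\ge c$ together with quasi-multiplicativity only gives $\beta\le1$. Strict inequality needs an extra input, and your fallback does not supply it:
\begin{itemize}
\item An open path reaching the boundary curve with probability $c$ yields only $\sum_{z\in I}\Pp_{p_c}(A_z)\ge c$, by a union bound.
\item A second-moment method presupposes the first-moment lower bound you are trying to prove.
\item Producing $\gtrsim r^{\delta}$ touching points with positive probability, uniformly in $F$, is at least as hard as the lemma itself.
\end{itemize}
Your per-scale claim is also false for $k\ll r$. Carrying a connection from scale $k$ out to distance $r$ is a one-arm event of probability at most $C(k/r)^{c}$, not a constant.

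\textbf{The missing idea is how the closed matching arm bound enters: through negative correlation at a single contact, not through counting.}

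Sample percolation on all of $\Z^2$, including $F$. Let $G$ be the event of an open circuit around $v$ in $\Lambda_{2r}(v)\setminus\Lambda_r(v)$ and a closed matching circuit in $\Lambda_{4r}(v)\setminus\Lambda_{3r}(v)$; RSW gives $\Pp_{p_c}(G)\ge c$. On $G$, the open cluster of the inner circuit is trapped by the closed circuit. Its outer boundary curve $\Gamma$ therefore surrounds $v$ and stays well inside $\Lambda_{5r}(v)$. A path in $F$ from $v$ to outside $\Lambda_{5r}(v)$ exists by the span hypothesis, and it must cross $\Gamma$.

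Follow $\Gamma$ from its intersection $q_0$ with the ray $\{v+se_1:s\ge0\}$ to the first point $q_1$ at $\ell^\infty$-distance $10$ from $F$.
\begin{itemize}
\item The sites just inside this arc form an open path avoiding $F$, from some $z\in I$ to $\{x_1\ge v_1+r-3\}$. So $A_z$ holds.
\item The sites just outside form a closed matching path from $\Lambda_5(z)$ to distance at least $r/2$. This is a decreasing event $B_z$.
\end{itemize}
Thus $G\subseteq\bigcup_{z\in I}(A_z\cap B_z)$, and Harris's inequality gives
\[
 c\le\sum_{z\in I}\Pp_{p_c}(A_z)\Pp_{p_c}(B_z)\le Cr^{-\delta}\sum_{z\in I}\Pp_{p_c}(A_z).
\]
This also removes the uniformity obstacle you identified. $B_z$ is a full-plane event that may pass through $F$, so its bound $Cr^{-\delta}$ does not depend on the shape of $F$. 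Meanwhile $A_z$ depends only on the states in $F^c$. No half-plane comparison is needed.
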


\begin{figure}
    \centering
    \begin{tikzpicture}[
  x=1cm,y=1cm,
  font=\fontsize{10}{12}\selectfont,
  line cap=round,line join=round,
  every node/.style={inner sep=2pt},
  note/.style={font=\fontsize{9}{11}\selectfont},
  smallnote/.style={font=\fontsize{8}{10}\selectfont},
  title/.style={font=\bfseries\fontsize{11}{13}\selectfont,anchor=west},
  leader/.style={draw=black!50,line width=.45pt},
  openpath/.style={draw=contactOpen,line width=1.15pt},
  closedpath/.style={draw=contactClosed,line width=1.05pt},
  boundary/.style={draw=contactBoundary,line width=.75pt}
]
\definecolor{contactOpen}{HTML}{167956}
\definecolor{contactClosed}{HTML}{B85B2D}
\definecolor{contactBoundary}{HTML}{286D9B}
\definecolor{contactGray}{HTML}{6D7680}

\node[title] at (0,4.35) {(a) Two circuits around the obstacle};
\node[title] at (9.15,4.35) {(b) Open and closed paths near $\Gamma$};

\begin{scope}[shift={(4.2,0)},x=.70cm,y=.70cm]
  \path[fill=contactClosed!8,even odd rule]
    (-4,-4) rectangle (4,4) (-3,-3) rectangle (3,3);
  \path[fill=contactOpen!9,even odd rule]
    (-2,-2) rectangle (2,2) (-1,-1) rectangle (1,1);
  \foreach \R in {1,2,3,4} {
    \draw[black!25,line width=.4pt]
      (-\R,-\R) rectangle (\R,\R);
  }
  \draw[black!55,dash pattern=on 4pt off 3pt,line width=.6pt]
    (-5,-5) rectangle (5,5);

  \path[fill=contactGray!30,draw=contactGray!90,line width=.7pt]
    (-6,-.23) -- (-5.5,-.23) -- (-5.5,-.15)
    -- (-4.2,-.15) -- (-4.2,-.30) -- (-3.3,-.30)
    -- (-3.3,-.15) -- (-1.8,-.15) -- (-1.8,-.05)
    -- (-.7,-.05) -- (-.7,-.15) -- (-.25,-.15)
    -- (0,0) -- (-.25,.17) -- (-1.1,.12)
    -- (-1.1,.30) -- (-2,.30) -- (-2,.18)
    -- (-3.8,.18) -- (-3.8,.40) -- (-4.8,.40)
    -- (-4.8,.12) -- (-6,.12) -- cycle;
  \node[anchor=south,note] at (-4.65,.45) {$F$};
  \draw[->,black!70,line width=.6pt]
    (-4.4,.90) -- (-5.78,.90);
  \node[anchor=south,smallnote,align=center] at (-4.80,1.00)
    {continues to\\$x_1\le v_1-10r$};

  \draw[openpath]
    (1.5,0) -- (1.5,-.8) -- (1.68,-.8) -- (1.68,-1.42)
    -- (1.28,-1.42) -- (1.28,-1.65) -- (-.9,-1.65)
    -- (-.9,-1.47) -- (-1.58,-1.47) -- (-1.58,-.7)
    -- (-1.42,-.7) -- (-1.42,.85) -- (-1.62,.85)
    -- (-1.62,1.44) -- (-.55,1.44) -- (-.55,1.65)
    -- (.85,1.65) -- (.85,1.50) -- (1.48,1.50)
    -- (1.48,.5) -- (1.5,.5) -- cycle;

  \draw[closedpath]
    (3.48,0) -- (3.48,-1.55) -- (3.65,-1.72)
    -- (3.65,-3.12) -- (3.22,-3.55) -- (2.30,-3.55)
    -- (2.15,-3.70) -- (.9,-3.70) -- (.65,-3.45)
    -- (-1.45,-3.45) -- (-1.55,-3.55) -- (-3.20,-3.55)
    -- (-3.60,-3.15) -- (-3.60,-1.70)
    -- (-3.45,-1.55) -- (-3.45,-.90) -- (-3.65,-.70)
    -- (-3.65,.70) -- (-3.50,.85) -- (-3.50,1.50)
    -- (-3.70,1.70) -- (-3.70,3.0) -- (-3.20,3.50)
    -- (-2.1,3.50) -- (-1.9,3.70) -- (-.8,3.70)
    -- (-.55,3.45) -- (1.50,3.45) -- (1.65,3.60)
    -- (3.10,3.60) -- (3.60,3.10) -- (3.60,1.70)
    -- (3.45,1.55) -- (3.45,.60) -- (3.48,.57) -- cycle;

  \draw[boundary]
    (2.55,0)
    .. controls (2.65,-1.25) and (1.50,-2.65) .. (0,-2.50)
    .. controls (-1.60,-2.65) and (-2.48,-1.85) .. (-2.58,-.45)
    .. controls (-2.63,.15) and (-2.65,1.20) .. (-1.95,1.95)
    .. controls (-.85,2.8) and (1.40,2.70) .. (2.35,1.35)
    .. controls (2.50,.95) and (2.55,.45) .. (2.55,0);

  \draw[contactBoundary,line width=1.45pt]
    (2.55,0)
    .. controls (2.65,-1.25) and (1.50,-2.65) .. (0,-2.50)
    .. controls (-1.60,-2.65) and (-2.48,-1.85) .. (-2.58,-.45);
  \draw[->,contactBoundary,line width=1.05pt]
    (.45,-2.47) .. controls (.17,-2.52) and (-.10,-2.52) .. (-.40,-2.50);

  \draw[->,black!65,line width=.55pt] (0,0) -- (5.42,0);
  \fill[black] (0,0) circle (1.8pt);
  \node[anchor=north east,note] at (-.06,-.08) {$v$};
  \fill[contactBoundary] (2.55,0) circle (1.8pt);
  \node[anchor=south west,note,text=contactBoundary,fill=white]
    at (2.62,.14) {$q_0$};
  \node[anchor=south,smallnote,fill=white] at (4.50,.14) {$v+se_1$};

  \fill[contactBoundary] (-2.58,-.45) circle (1.8pt);
  \node[anchor=north east,note,text=contactBoundary]
    at (-2.74,-.57) {$q_1$};
  \draw[contactBoundary!65,dash pattern=on 2pt off 2pt,line width=.5pt]
    (-2.90,-.77) rectangle (-2.27,-.12);
  \node[anchor=south,note,text=contactBoundary,fill=white]
    at (.15,2.62) {$\Gamma$};

  \node[anchor=north west,smallnote,fill=white,inner sep=1pt]
    at (1.04,-1.04) {$\Lambda_r(v)$};
  \node[anchor=north west,smallnote,fill=white,inner sep=1pt]
    at (2.04,-2.04) {$\Lambda_{2r}(v)$};
  \node[anchor=north west,smallnote,fill=white,inner sep=1pt]
    at (3.04,-3.04) {$\Lambda_{3r}(v)$};
  \node[anchor=north west,smallnote,fill=white,inner sep=1pt]
    at (4.04,-4.04) {$\Lambda_{4r}(v)$};
  \node[anchor=north west,smallnote,fill=white,inner sep=1pt]
    at (5.04,-5.04) {$\Lambda_{5r}(v)$};
\end{scope}

\draw[openpath] (.10,-4.08) -- (.65,-4.08);
\node[anchor=west,note] at (.75,-4.08) {open circuit};
\draw[closedpath] (3.02,-4.08) -- (3.57,-4.08);
\node[anchor=west,note] at (3.67,-4.08) {closed circuit};
\draw[boundary,line width=1.2pt] (6.13,-4.08) -- (6.68,-4.08);
\node[anchor=west,note] at (6.78,-4.08) {$\Gamma=\partial_{\rm ext}K$};

\begin{scope}[shift={(9.3,1.95)}]
  \path[fill=contactGray!30,draw=contactGray!90,line width=.7pt]
    (-.10,-1.00) -- (.43,-1.10) -- (.30,-.75)
    -- (.60,-.55) -- (.60,.45) -- (.38,.70)
    -- (.58,1.20) -- (.40,1.55) -- (-.10,1.60) -- cycle;
  \node at (.17,.93) {$F$};

  \draw[contactClosed!75,dash pattern=on 2pt off 2pt,line width=.6pt]
    (1.10,-.32) rectangle (2.10,.68);
  \node[anchor=south,note,text=contactClosed] at (1.60,.75)
    {$\Lambda_5(z)$};

  \draw[openpath]
    (1.60,.18) -- (2.42,.18) -- (2.42,.53)
    -- (3.68,.53) -- (3.68,.28) -- (4.32,.28)
    -- (4.32,.68) -- (5.38,.68) -- (5.38,.38)
    -- (6.38,.38) -- (6.38,.18) -- (7.20,.18);
  \draw[boundary,line width=.9pt]
    (1.60,0) -- (2.60,0) -- (2.60,.35)
    -- (3.50,.35) -- (3.50,.10) -- (4.50,.10)
    -- (4.50,.50) -- (5.20,.50) -- (5.20,.20)
    -- (6.20,.20) -- (6.20,0) -- (7.20,0);
  \draw[->,contactBoundary,line width=.9pt] (4.25,.10) -- (3.80,.10);
  \draw[closedpath]
    (1.60,-.18) -- (2.60,-.18) -- (2.78,0)
    -- (2.78,.17) -- (3.32,.17) -- (3.32,-.08)
    -- (4.50,-.08) -- (4.68,.10) -- (4.68,.32)
    -- (5.02,.32) -- (5.02,.02) -- (6.02,.02)
    -- (6.02,-.18) -- (7.20,-.18);

  \fill[contactOpen] (1.60,.18) circle (2pt);
  \node[anchor=south west,note,text=contactOpen] at (2.22,.76) {$z\in I$};
  \draw[leader,draw=contactOpen!65] (2.20,.77) -- (1.64,.24);
  \fill[contactBoundary] (1.60,0) circle (1.4pt);
  \node[anchor=north east,note,text=contactBoundary] at (1.53,-.05) {$q_1$};
  \fill[contactBoundary] (7.20,0) circle (1.5pt);
  \node[anchor=west,note,text=contactBoundary] at (7.28,0) {$q_0$};

  \node[note,text=contactOpen,anchor=south] at (5.1,1.03)
    {open path in $A_z$};
  \node[note,text=contactClosed,anchor=north] at (5.05,-.33)
    {closed path in $B_z$};
  \draw[<->,black!65,line width=.5pt] (.60,-.64) -- (1.60,-.64)
    node[midway,below,smallnote] {$10$};
  \node[note,text=contactBoundary] at (4.55,-.94)
    {follow $\Gamma$ from $q_0$ towards $F$};
  \draw[<->,black!60,line width=.5pt] (1.60,-1.43) -- (7.20,-1.43);
  \node[anchor=north,smallnote,align=center] at (4.40,-1.50)
    {the closed path reaches distance at least $r/2$ from $z$};
\end{scope}

\node[anchor=north west,note,align=left,text width=8.0cm] at (9.15,-.13)
  {$A_z$: an open connection in $F^c\cap\Lambda_{5r}(v)$\\[2pt]
   from $z$ to $\{x:x_1\ge v_1+r-C_0\}$.\\[8pt]
   $B_z$: a closed path from $\Lambda_5(z)$ to
   $\Lambda_{r/2}(z)^c$;\\[2pt]
   it may pass through $F$, and $\mathbb P(B_z)\le Cr^{-\delta}$.\\[8pt]
   $I=\{z\notin F:d_\infty(z,F)\le C_0\}\cap\Lambda_{5r}(v)$.};

\node[anchor=north west,note,align=left,text width=8.0cm] at (9.15,-2.88)
  {$K$: the open component of the inner circuit.\\[3pt]
   $\Gamma$: its outer boundary.\\[3pt]
   $q_1$: the first point with $d_\infty(q_1,F)=10$.\\[3pt]
   The nearby open path avoids $F$.};

\begin{scope}[yshift=-.45cm]
\draw[black!18,line width=.5pt] (0,-4.57) -- (17.4,-4.57);
\node[anchor=west,note] at (.10,-4.94)
  {$G$: both circuits in (a) exist.\quad
   $\mathbb P(G)\ge c>0$,\qquad
   $G\subseteq\bigcup_{z\in I}(A_z\cap B_z)$.};
\node at (8.70,-5.77)
  {$\displaystyle
   c\le\sum_{z\in I}\mathbb P(A_z\cap B_z)
   \le\sum_{z\in I}\mathbb P(A_z)\mathbb P(B_z)
   \le Cr^{-\delta}\sum_{z\in I}\mathbb P(A_z).$};
\node[note] at (8.70,-6.56)
  {$A_z$ is increasing and $B_z$ is decreasing: Harris's inequality gives
   $\sum_{z\in I}\mathbb P(A_z)\ge c'r^\delta$.};
\node[anchor=west,smallnote,text=black!60,align=left] at (.10,-7.25)
  {All probabilities are at $p_c$. Open paths use nearest-neighbor steps;
   closed paths may also use diagonal steps.\\[2pt]
   Grey denotes the deterministic set $F$, not its site states.
   The drawing is schematic; fixed distances and path separation are enlarged.};
\end{scope}
\end{tikzpicture}
    \caption{Illustration for the proof of Lemma \ref{lem:contacts}}
    \label{fig:lemma_6_2}
\end{figure}

\begin{proof}
We will find, with probability bounded below, an open path counted
by some event $A_z$ and a long closed path starting near the same vertex $z$.
Harris's inequality will then give the result.

Throughout the proof, open paths use nearest-neighbor steps,
whereas closed paths may also use diagonal steps.
Sample independent site percolation of parameter $p_c$ on the
whole lattice, including $F$.

Let $G$ be the event that there is an open circuit surrounding $v$
in $\Lambda_{2r}(v)\setminus\Lambda_r(v)$ and a closed circuit
surrounding $v$ in
$\Lambda_{4r}(v)\setminus\Lambda_{3r}(v)$.
The two events depend on disjoint sets of vertices, so
Lemma \ref{lem:percolation} gives $\Pp_{p_c}(G)\ge c>0$.

On $G$, choose such an open circuit and let $K$ be its open
connected component. The outer closed circuit prevents $K$ from
leaving $\Lambda_{4r}(v)$.
We describe how to trace the outer boundary of $K$.
Place a unit square centred at each vertex of $K$ and follow the
boundary adjacent to the unbounded complementary region.
At a corner where two diagonally opposite squares belong to $K$,
separate the two boundary passages by a small displacement,
keeping the two open squares separate at that corner.
This gives a simple closed curve $\Gamma$ surrounding the chosen
open circuit. For the correspondence between planar boundary interfaces and right-most
paths, see Biskup, Louidor, Procaccia and Rosenthal
\cite[Section~2.2, Proposition~2.3]{BLPR}.
See also the construction of surrounding loops by Cai, Procaccia and Zhang
\cite[Section~7.1, Lemma~7.5 and Remark~7.6]{CPZ}.
For our square-site model, we specify the local convention below to ensure
that the open path is nearest-neighbor connected and the closed path is
connected in the matching lattice.

Every segment of $\Gamma$ determines two paths within distance
$2$ of that segment. On the inside, the centres of the adjacent
squares give an open nearest-neighbor path: whenever two successive
centres are diagonal neighbors, insert the centre of the square
between them on the inside, which also belongs to $K$.
On the outside, the adjacent square centres are closed, since an
open neighbor of $K$ would belong to $K$.
Successive outside centres are nearest or diagonal neighbors,
so they give a closed path.

Choose a point $q_0$ where $\Gamma$ meets the horizontal ray
$\{v+se_1:s\ge0\}$.
Since $\Gamma$ surrounds the inner open circuit,
$(q_0)_1\ge v_1+r-1$.
Every vertex of $F$ has first coordinate at most $v_1$, so
$q_0$ is at distance at least $r-1$ from $F$.

On the other hand, connectedness and the horizontal span of $F$
give a nearest-neighbor path in $F$ from $v$ to outside
$\Lambda_{5r}(v)$.
Its piecewise linear interpolation must cross $\Gamma$.
Thus $\Gamma$ comes within distance $1$ of $F$.
Starting at $q_0$, follow $\Gamma$ clockwise until the first point
$q_1$ at distance $10$ from $F$, using $\ell^\infty$ distance.
Every point of this segment is at distance at least $10$ from $F$.

The open path obtained from this segment therefore avoids $F$.
Its endpoint $z$ near $q_1$ satisfies
$d_\infty(z,F)\le12$, and its endpoint near $q_0$ has first
coordinate at least $v_1+r-3$.
Both paths lie in $\Lambda_{4r+3}(v)\subset\Lambda_{5r}(v)$.
Taking $C_0\ge20$, we have $z\in I$ and $A_z$ occurs.

The closed path starts within distance $4$ of $z$ and reaches
a vertex whose first coordinate is at least $v_1+r-3$.
Since $z_1\le v_1+12$, this path reaches distance at least $r/2$
from $z$ when $r$ is sufficiently large.

For each deterministic $z\in I$, let $B_z$ be the event that
a closed path connects $\Lambda_5(z)$ to the complement of
$\Lambda_{r/2}(z)$.
The path in this definition may pass through $F$.
The construction above proves
$G\subseteq\bigcup_{z\in I}(A_z\cap B_z)$.
Moreover, Lemma \ref{lem:percolation} gives
$\Pp_{p_c}(B_z)\le Cr^{-\delta}$ uniformly in $z$.

Each $A_z$ is increasing and each $B_z$ is decreasing.
Consequently, the union bound and Harris's inequality give
\[
c
\le \Pp_{p_c}(G)
\le \sum_{z\in I}\Pp_{p_c}(A_z\cap B_z)
\le \sum_{z\in I}\Pp_{p_c}(A_z)\Pp_{p_c}(B_z)
\le Cr^{-\delta}\sum_{z\in I}\Pp_{p_c}(A_z).
\]
Although the construction uses the auxiliary states on $F$,
each $A_z$ depends only on the states in $F^c$.
Rearranging proves \eqref{eq:contacts}.
\end{proof}

\subsection{A uniform explosion trial}

\begin{lemma}[Explosion from a uniform forward density bound]
\label{lem:restart}
Let $b:\mathbb N\to[0,1]$ be nonincreasing, with $b(1)<1$ and
$b(d)\le C_m d^{-m}$ for every $m>0$.
Let $E\subset\Z^2$ be finite, nonempty and nearest-neighbor connected.
Put $K=E\cup\partial^oE$ and $H=\max_{x\in K}x_1$.

Start PMDLA from $E$, with independent Poisson free-particle counts
of means $0\le\mu(x)\le\lambda_c$, where $\mu=0$ on $K$ and
\begin{equation}\label{eq:trial-initial-density}
\mu(x)\ge\lambda_c[1-b(x_1-H)]
\qquad\text{for every }x_1>H.
\end{equation}
The bound is uniform in the second coordinate.
There is $p_*>0$, depending only on $b$,
such that $\Pp(T<1)\ge p_*$.
\end{lemma}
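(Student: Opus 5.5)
We follow the strategy outlined in the introduction's sketch of critical explosion, working on the rate-two clock throughout. Let $V_t=H_t-H$ be the forward extension of the aggregate beyond the starting line. The trial has two stages. First, a finite forcing event, of probability depending only on $b$, drives $V$ to a large deterministic level $V_0$ by time $1/2$. Second, conditionally on that forcing, the times needed to move from level $2^kV_0$ to level $2^{k+1}V_0$ have summable expectations, stopped at time $1$. A Markov bound then makes $T<1$ likely given the forcing. The forcing step is the same device as in Theorem \ref{thm:supercritical}. Prescribe a horizontal segment $x_1,\dots,x_{V_0}$ ahead of the rightmost boundary point $U$, each site holding a particle that does not jump during $(0,1/2]$, together with one particle at $x_1$ whose first jump is $x_1\to U$. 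By \eqref{eq:trial-initial-density} and $b(1)<1$, every factor is bounded below by a constant depending only on $b$ and $V_0$. Only the segment's own occupation is forced, so the engulfment stays finite by Lemma \ref{lem:finite-update}.

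The main step is a rate estimate. Suppose $V_t=V\ge V_0$ and $t<1$. We claim the conditional rate of an update that pushes $V$ to at least $2V$ is at least $cV^\eta$. To set this up, take $r=V$ and let $F=\Ecal_t$ with $v$ its rightmost point. Since $E$ is fixed, the horizontal span of $F$ is at least $V-O(1)\ge 10r'$ for some $r'$ comparable to $V$. Lemma \ref{lem:contacts} then gives $\sum_{z\in I}\Pp_{p_c}(A_z)\ge cr'^{\delta}$.

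Next we compare the conditional field with $p_c$-percolation, which is where the buffer comes in. Sites at distance at least $w=C\log V/\log\log V$ ahead of the past buffered aggregate have mean $u_t\ge\lambda_c(1-\epsilon_V)$. Ahead of the front this follows from the historical formula \eqref{eq:historical} and the bound on $b$, propagated in time via Lemma \ref{lem:profile-density}-type arguments. On the sides, a reversed walk must avoid $K$ for a short time $t\le1$, and walks starting at distance $w$ do so except with probability at most $e^{-cw\log w}\le V^{-C}$.

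We then make the connection events compatible with this field. Modify the event $A_z$ so that the first $w$ steps of the open path are forced: they are open at mean $u_t\ge\lambda_c q_1$, or at worst we use the sites' positive occupation probabilities, at cost $e^{-Cw}=V^{-o(1)}$ by the choice of $w$. The remainder of the path uses sites where the densities are nearly critical. The total deficit over the box $\Lambda_{5r}$ is at most $V^2\epsilon_V$, so the Radon--Nikodym factor between the two measures is bounded if $\epsilon_V\le V^{-3}$; this holds by the superpolynomial decay of $b$. An entrance at $z$, or at a boundary site adjacent to $z$, then occurs at rate at least $c\,\Pp(\text{modified }A_z)$, and the resulting engulfment reaches $x_1\ge v_1+r-C_0$. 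Summing over $z$ gives total rate at least $cV^{\delta-o(1)}$.

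From the rate estimate, the expected time from level $2^kV_0$ to level $2^{k+1}V_0$, stopped at $1$, is at most $C(2^kV_0)^{-\eta}$. This sum is at most $CV_0^{-\eta}<1/4$ once $V_0$ is large, so $T<1$ with conditional probability at least $1/2$ after the forcing. This gives $p_*$.

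The obstacle I expect is the step showing the conditional field stays close to critical near the lateral and rear parts of $\Ecal_t$ where Lemma \ref{lem:contacts} places the contacts $z$. Those regions have been queried, and the deficit is controlled only by $K$'s occupation history. To handle this, I would restrict $I$ to contacts within $\Lambda_{5r}(v)$, which are ahead of or beside the latest advances, and use $t<1$ so that all histories are short.
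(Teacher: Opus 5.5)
Your architecture matches the paper's: a finite forcing event, then a rate $cV^\eta$ for macroscopic advances, summable expected level-crossing times, and a Markov bound. The rate estimate, however, has a genuine gap, because you apply Lemma \ref{lem:contacts} with obstacle $F=\Ecal_t$. The open path in $A_z$ is then only required to avoid $\Ecal_t$ itself. It may pass through sites of $\partial^o\Ecal_t$, where $u_t\equiv0$. It may also run alongside the aggregate through many sites within distance $w$ of $K_t$, where only a constant lower bound such as $u_t\ge m_*$ is available.

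Your Radon--Nikodym comparison needs a small deficit at every site the event can use. So the claimed total deficit $O(V^2\epsilon_V)$ over $\Lambda_{5r}(v)$ is false. Forcing the first $w$ steps of the path does not help, because later portions of the path can return to the aggregate any number of times. Your last paragraph identifies exactly this obstacle, but the remedy does not address it: $I$ is already contained in $\Lambda_{5r}(v)$, and $t<1$ does not raise the density next to the aggregate.

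The missing idea is that Lemma \ref{lem:contacts} is uniform over the finite connected obstacle. You can therefore apply it to the thickened set $F_t=\{x:d_1(x,K_{t-})\le L\}$, with $L=\lceil 8\log V/\log\log V\rceil$, $r=\lfloor V/20\rfloor$, and rightmost point $v_t=U_t+Le_1$, where $U_t$ is a rightmost vertex of $K_{t-}$. Every site used by any $A_{t,z}$ then lies in $D_t=F_t^c\cap\Lambda_{5r}(v_t)$. These sites are at graph distance more than $L$ from $K_{t-}$ and at least $\frac34V+L$ ahead of the fixed line $x_1=H$, so there $\lambda_c-u_{t-}\le\lambda_c\Pp(\Pois(2)\ge L)+CV^{-8}$. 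A site-by-site coupling and a union bound, costing $CV^4[\Pp(\Pois(2)\ge L)+V^{-8}]\to0$, then transfer $\sum_z\Pp_{p_c}(A_{t,z})\ge cV^\delta$ to the true field.

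The buffer is crossed separately, by a shortest path from $\Ecal_{t-}$ to $z$ of length at most $L+2C_0+1$. It is fully occupied with probability at least $(1-e^{-m_*})^{L+2C_0}=V^{-o(1)}$, and Harris's inequality combines it with $A_{t,z}$. Keeping one contact per entrance site costs a further factor $(L+1)^{-2}$.

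There are two smaller corrections. First, the span is only guaranteed to be about $V$, so a contact advances $V$ by a factor near $1+1/10$, not $2$ (the paper's factor is $1.01$); use levels $a^nM$ with $a$ close to $1$. Second, after the forcing you must remove the forced trajectories from the Poisson field. The paper does this by conditioning on their paths up to $\delta_0$ and keeping only trajectories that avoid $K_t$ on $[0,t]$.
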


\begin{remark}The hypothesis \eqref{eq:trial-initial-density} is supplied by
Lemma \ref{lem:profile-density} at a good history.
It is a quantitative density bound; asymptotic linear speed alone
does not imply the lemma.
\end{remark}
Figure \ref{fig:restartlemma} summarizes the proof notation and ideas.
\begin{figure}
    \centering
    \begin{tikzpicture}[
  x=1cm,y=1cm,
  font=\fontsize{10}{12}\selectfont,
  >=Stealth,
  line cap=round,line join=round,
  every node/.style={inner sep=2pt},
  note/.style={font=\fontsize{9}{11}\selectfont},
  smallnote/.style={font=\fontsize{9}{11}\selectfont},
  point/.style={circle,fill=black,inner sep=1.8pt},
  contact/.style={circle,draw=pmdlaContact,fill=white,line width=.85pt,
    inner sep=2pt},
  leader/.style={draw=black!55,line width=.45pt},
  openpath/.style={draw=pmdlaOpen,line width=1.45pt},
  bridge/.style={draw=pmdlaBridge,line width=1.35pt}
]
\definecolor{pmdlaBlue}{HTML}{286888}
\definecolor{pmdlaLightBlue}{HTML}{EAF4FA}
\definecolor{pmdlaBuffer}{HTML}{FBF0DB}
\definecolor{pmdlaBridge}{HTML}{B97820}
\definecolor{pmdlaGray}{HTML}{68737E}
\definecolor{pmdlaOpen}{HTML}{197B61}
\definecolor{pmdlaContact}{HTML}{A33962}

\node[anchor=west,font=\bfseries\fontsize{11}{13}\selectfont]
  at (0,3.65) {(a) Contacts near the current rightmost point};

\fill[pmdlaLightBlue] (9.55,-2.25) rectangle (14.05,2.25);
\fill[pmdlaOpen!7] (12.16,-2.25) rectangle (14.05,2.25);

\path[fill=pmdlaBuffer,draw=pmdlaBridge!85,
  line width=.8pt,dash pattern=on 4pt off 2pt]
  (.08,-.6) -- (.8,-1.6) -- (1.9,-1.35) -- (3.1,-1.45)
  -- (4.1,-1.08) -- (5.3,-1.3) -- (6.2,-1.1)
  -- (7.0,-1.6) -- (8.05,-1.18) -- (8.8,-1.03)
  -- (9.4,-.94) -- (10.55,-.94) -- (11.2,-.6)
  -- (11.8,0) -- (11.2,.65) -- (10.55,.9) -- (9.3,.9)
  -- (8.8,1.15) -- (8.0,1.52) -- (7.1,1.25)
  -- (6.0,1.6) -- (5.2,1.25) -- (4.2,1.45)
  -- (3.2,1.12) -- (2.4,1.4) -- (1.4,1.14)
  -- (.4,1.25) -- (.08,.55) -- cycle;

\path[fill=pmdlaGray!35,draw=pmdlaGray!90,line width=.8pt]
  (.6,-.45) -- (1.25,-.95) -- (2.05,-.7) -- (3.2,-.8)
  -- (4.1,-.45) -- (5.2,-.67) -- (6.2,-.48)
  -- (7.05,-.98) -- (7.95,-.58) -- (8.8,-.42)
  -- (9.4,-.3) -- (10.45,-.3) -- (11.2,0)
  -- (10.6,.25) -- (9.5,.25) -- (8.95,.48)
  -- (8.0,.86) -- (7.1,.58) -- (6.0,.96)
  -- (5.2,.58) -- (4.2,.8) -- (3.2,.47)
  -- (2.4,.75) -- (1.4,.48) -- (.6,.65) -- cycle;

\draw[black!55,dash pattern=on 4pt off 3pt] (2.2,-2.5) -- (2.2,2.75);
\draw[pmdlaGray,dash pattern=on 3pt off 3pt] (11.2,-2.5) -- (11.2,2.8);
\draw[pmdlaBlue,line width=.9pt] (9.55,-2.25) rectangle (14.05,2.25);
\draw[pmdlaOpen,dash pattern=on 2pt off 2pt,line width=.8pt]
  (12.16,-2.25) -- (12.16,2.25);

\node[align=center,anchor=south] at (2.2,2.78)
  {$x_1=H$\\[-1pt]\scriptsize fixed initial line};
\node[align=center,anchor=south] at (11.2,2.83)
  {$x_1=H_t=H+V$\\[-1pt]\scriptsize current rightmost coordinate};
\node[anchor=west,text=pmdlaBlue] at (14.2,2.05) {$Q_t=\Lambda_{5r}(v_t)$};
\node[anchor=west,smallnote,text=black!65] at (14.2,1.65)
  {$r=\lfloor V/20\rfloor$};

\node at (5.1,.15) {$K_t=\mathcal E_t\cup\partial^o\mathcal E_t$};
\node[anchor=west,text=pmdlaBridge,note,fill=white,inner sep=3pt] at (.15,-1.98)
  {$F_t=\{x:d_1(x,K_t)\le L\}$};
\node[align=center,note] at (5.7,2.15)
  {Initial density, uniformly in $x_2$:\\[2pt]
   $\mu(x)\ge\lambda_c[1-b(x_1-H)],\quad x_1>H$};

\node[point,label={[smallnote,anchor=south east]140:$U_t$}]
  (Ut) at (11.2,0) {};
\node[point,fill=pmdlaBridge,
  label={[smallnote,anchor=north west,fill=white,inner sep=1.5pt]-30:$v_t=U_t+Le_1$}]
  (vt) at (11.8,0) {};
\draw[<->,black!65,line width=.5pt] (11.2,-.75) -- (11.8,-.75)
  node[midway,below,smallnote] {$L$};

\node[contact] at (9.92,1.03) {};
\node[contact] at (10.69,-1.03) {};
\node[contact] (zmain) at (11.52,.49) {};
\node[smallnote,text=pmdlaContact,anchor=south east] at (11.52,1.04) {$z\in I_t$};
\draw[leader,draw=pmdlaContact!75] (11.48,1.01) -- (11.52,.56);
\draw[openpath] (zmain) -- (11.87,.49) -- (11.87,.82)
  -- (12.04,.82) -- (12.04,.62) -- (12.55,.62)
  -- (12.55,1.02) -- (13.3,1.02);
\fill[pmdlaOpen] (13.3,1.02) circle (1.8pt);
\node[smallnote,text=pmdlaOpen,align=center,fill=pmdlaOpen!7] at (13.05,1.58)
  {open path\\in $A_{t,z}$};
\node[text=pmdlaBlue,align=center,fill=pmdlaLightBlue] at (10.98,-1.68)
  {$D_t=Q_t\setminus F_t$};

\draw[leader] (13.0,-2.96) -- (12.16,-2.48) -- (12.16,-2.25);
\node[anchor=north west,smallnote,text=pmdlaOpen,align=left] at (12.3,-2.98)
  {target:\\[-1pt]$x_1=(v_t)_1+r-C_0$};
\draw[<->,line width=.6pt] (2.2,-2.84) -- (11.2,-2.84)
  node[midway,fill=white,inner sep=3pt] {$V=H_t-H$};
\draw[leader] (8.42,-3.62) -- (9.55,-3.12) -- (9.55,-2.25);
\node[anchor=east,note] at (8.42,-3.62)
  {left edge: $H+V+L-5r\ge H+\tfrac34V+L$};

\begin{scope}[shift={(0,-6.95)}]
\node[anchor=west,font=\bfseries\fontsize{11}{13}\selectfont]
  at (0,2.35) {(b) An entrance connects the aggregate to one contact};

\fill[pmdlaLightBlue] (0,-.9) rectangle (7.8,1.05);
\fill[pmdlaBuffer] (0,-.9) rectangle (5.5,1.05);
\fill[pmdlaGray!24] (0,-.9) rectangle (2.5,1.05);
\fill[pmdlaGray!60] (0,-.9) rectangle (1.5,1.05);
\draw[pmdlaGray!80,line width=.75pt] (1.5,-.9) -- (1.5,1.05);
\draw[pmdlaGray,line width=.75pt] (2.5,-.9) -- (2.5,1.05);
\draw[pmdlaBridge,line width=.8pt,dash pattern=on 4pt off 2pt]
  (5.5,-.9) -- (5.5,1.05);

\node at (.7,1.47) {$\mathcal E_t$};
\node at (2.0,1.47) {$K_t$};
\node[text=pmdlaBridge] at (4.25,1.47) {$F_t$};
\node[text=pmdlaBlue] at (6.65,1.47) {$D_t$};

\draw[bridge] (1,0) -- (6,0);
\fill[black] (1,0) circle (2pt);
\draw[fill=white,line width=.8pt] (2,0) circle (3.3pt);
\draw[fill=white,line width=.7pt] (3,0) circle (5.5pt);
\fill[pmdlaOpen] (2.93,-.02) circle (1.9pt);
\fill[pmdlaOpen] (3.07,.02) circle (1.9pt);
\fill[pmdlaOpen] (4,0) circle (2.4pt);
\fill[pmdlaOpen] (5,0) circle (2.4pt);
\node[contact] at (6,0) {};
\fill[pmdlaOpen] (6,0) circle (1.2pt);
\draw[openpath,-{Stealth[length=2.2mm]}] (6.06,0) -- (6.5,0)
  -- (6.5,.35) -- (6.9,.35) -- (6.9,.15) -- (7.28,.15)
  -- (7.28,.6) -- (7.72,.6);

\draw[-{Stealth[length=2.1mm]},black,line width=.8pt]
  (3,.2) to[out=130,in=50] node[above,smallnote] {jump} (2,.2);
\node[smallnote,anchor=north] at (1,-.17) {$y_0$};
\node[smallnote,anchor=north,align=center] at (2,-.17) {$w=y_1$\\vacant};
\node[smallnote,anchor=north] at (3,-.17) {$y_2$};
\node[smallnote,anchor=north] at (4,-.17) {$y_3$};
\node[smallnote,anchor=north,text=pmdlaContact] at (6,-.17) {$z$};
\node[smallnote,text=pmdlaOpen] at (7.1,.92) {$A_{t,z}$};
\node[smallnote,anchor=north west,align=left] at (3.2,-.44)
  {At $y_2$: one particle jumps;\\
   at least one remains.};

\draw[<->,black!65,line width=.5pt] (1,-1.26) -- (6,-1.26)
  node[midway,below,smallnote] {connecting path of length at most $L+2C_0+1$};

\node[anchor=north west,note,align=left,text width=7.7cm] at (8.75,1.54)
  {Every contact event uses only sites in $D_t$:\\[4pt]
   $x_1-H\ge\tfrac34V+L$,\quad $d_1(x,K_t)>L$.\\[6pt]
   $\displaystyle\lambda_c-u_t(x)\le
     \lambda_c\mathbb P(\mathrm{Pois}(2)\ge L)+C_mV^{-m}.$\\[8pt]
   On the connecting path outside $K_t$:\\[3pt]
   $u_t(x)\ge m_*>0$.};

\node[anchor=north west,note,align=left,text width=7.7cm] at (8.75,-1.37)
  {Entrance at $w$ + occupied continuation + $A_{t,z}$\\[3pt]
   $\Longrightarrow\ H_{\rm new}\ge H+1.01V$.};
\end{scope}

\node[anchor=west,smallnote,text=black!65] at (0,-9.42)
  {$I_t=\{z\in D_t:d_\infty(z,F_t)\le C_0\}$.\quad
   Schematic; the lattice is suppressed in (a), and the width $L$ is exaggerated.};
\end{tikzpicture}
    \caption{Illustration of the proof of Lemma \ref{lem:restart}\label{fig:restartlemma}}
\end{figure}

\begin{proof}
Throughout the proof, $H$ denotes the fixed initial rightmost
coordinate. Write
$K_t=\Ecal_t\cup\partial^o\Ecal_t$,
$H_t=\max_{x\in K_t}x_1$, and $V_t=H_t-H$.
We will use contacts near the current rightmost point, in a region
that remains far ahead of the initial line $x_1=H$.

\smallskip
\noindent\emph{Producing an initial extension.}
Put $m_0=\lambda_c[1-b(1)]>0$ and $\delta_0=1/4$.
Choose a rightmost vertex $U$ of $K$, and set
$x_j=U+je_1$ for $1\le j\le M$.
The integer $M$ will be chosen at the end.

For each $x$, independently sample $N_x\sim\operatorname{Pois}(\mu(x))$,
and then sample $N_x$ independent rate-two random-walk trajectories
from $\Pp_x$. All trajectories are sampled for all times;
the aggregation process uses each trajectory only until its
particle is absorbed.
Let $J_t(\omega)$ denote the number of jumps of a trajectory
$\omega$ by time $t$.

For $1\le j\le M$, let $Z_j$ count the sampled trajectories in
\[
\mathcal D_j
=\{\omega:\omega(0)=x_j,\ J_{\delta_0}(\omega)=0\},
\]
and let $Z_*$ count those in
\[
\mathcal D_*
=\{\omega:\omega(0)=x_1,\ J_{\delta_0}(\omega)=1,\
                         \omega(\delta_0)=U\}.
\]
Define $F_M=\{Z_1=\cdots=Z_M=Z_*=1\}$.
There are no restrictions on the other trajectories.

Poisson thinning makes these counts independent, with means
$\alpha_j=\mu(x_j)e^{-2\delta_0}$ and
$\beta=\mu(x_1)(\delta_0/2)e^{-2\delta_0}$.
Since $m_0\le\mu(x_j)\le\lambda_c$,
\[
\Pp(F_M)
=e^{-\sum_j\alpha_j-\beta}\,\beta\prod_{j=1}^M\alpha_j
\ge
e^{-(M+1)\lambda_c}
(m_0e^{-2\delta_0})^{M+1}\frac{\delta_0}{2}
=:p_M>0.
\]

On $F_M$, the sites $x_1,\ldots,x_M$ stay occupied until the
first absorption of one of them, which absorbs the whole segment.
If this has not occurred before the specified jump from $x_1$
to $U$, then $U$ is still on the outer boundary: any earlier
absorption of $U$ would also have absorbed its occupied neighbor
$x_1$ and the rest of the segment.
The specified jump therefore forces absorption of the segment.

Thus, on $F_M\cap\{T>\delta_0\}$, all specified particles have
been absorbed by $\delta_0$, and $V_{\delta_0}\ge M$.

\smallskip
\noindent\emph{Density relative to the fixed initial line.}
Let $\mathcal D=\mathcal D_*\cup\bigcup_j\mathcal D_j$.
Let $\mathcal H$ record the number of sampled trajectories in
$\mathcal D$ and their restrictions to $[0,\delta_0]$.
For $t\ge\delta_0$, put
$\widehat{\Gcal}_t=\Gcal_t\vee\mathcal H$.
In particular, $F_M$ is measurable at time $\delta_0$.

Conditional on $\mathcal H$, Poisson thinning leaves the
trajectories outside $\mathcal D$ as an independent Poisson
process with intensity
$\sum_x\mu(x)\Pp_x(d\omega)\mathbf1_{\{\omega\notin\mathcal D\}}$.
On $F_M$, all particles belonging to $\mathcal D$ have been
absorbed by $\delta_0$, unless explosion has already occurred.
Since membership in $\mathcal D$ depends only on the trajectory
up to $\delta_0$, the remaining particles have ordinary
random-walk transition rates after this time.

Apply Lemma \ref{lem:density} to the remaining trajectories.
Conditional on $\widehat{\Gcal}_t$, the free counts are independent
Poisson variables with means $u_t(x)\le\lambda_c$.
To obtain a lower bound, retain only trajectories that avoid
the current set $K_t$ throughout $[0,t]$.
They avoid all earlier explored sets and cannot start at any
$x_j$, since these sites now belong to $K_t$.
They therefore belong to none of the deleted sets $\mathcal D_j$
or $\mathcal D_*$.
Reversibility gives, on $F_M$ and for
$\delta_0\le t\le1$ with $t<T$,
\begin{equation}\label{eq:restart-filter}
u_t(x)\ge
\Ee_x\bigl[\mu(X_t);
           X_s\notin K_t\text{ for every }0\le s\le t\bigr].
\end{equation}

For $x\notin K_t$ with $x_1>H$, the trajectory making no jump
gives $u_t(x)\ge m_*$, where $m_*=m_0e^{-2}>0$.
This is the positive density bound used close to the aggregate.

A stronger bound holds farther from the aggregate.
Write $d_1$ for graph distance.
If $d_1(x,K_t)\ge L$ and $d=x_1-H>0$, then
\begin{equation}\label{eq:restart-buffer}
\lambda_c-u_t(x)
\le\lambda_c\Pp(\operatorname{Pois}(2)\ge L)+C_m d^{-m}
\qquad\text{for every }m>0.
\end{equation}
Indeed, hitting $K_t$ requires at least $L$ jumps.
Extend $b(k)=1$ for $k\le0$.
The remaining contribution to the deficit is at most
$\lambda_c\Ee_x b(X_t\cdot e_1-H)$.
When $|(X_t-x)\cdot e_1|\le d/2$, this is bounded using
$b(\lceil d/2\rceil)$; the complementary event requires at
least $d/2$ jumps.
The assumed decay of $b$ and the Poisson tail give
\eqref{eq:restart-buffer}.

\smallskip
\noindent\emph{Localizing every contact event.}
Consider a finite pre-update state at time $t\in[\delta_0,1]$,
and write $V=V_{t-}$.
For sufficiently large $V$, set
\[
L=\left\lceil\frac{8\log V}{\log\log V}\right\rceil,
\qquad r=\lfloor V/20\rfloor,
\qquad F_t=\{x:d_1(x,K_{t-})\le L\}.
\]
Choose a rightmost vertex $U_t$ of $K_{t-}$ and put
$v_t=U_t+Le_1$.
Then $v_t$ is rightmost in $F_t$ and
$(v_t)_1=H+V+L$.

Define the domain and contact set
\[
D_t=F_t^c\cap\Lambda_{5r}(v_t),
\qquad
I_t=\{z\in D_t:d_\infty(z,F_t)\le C_0\}.
\]
For $z\in I_t$, let $A_{t,z}$ be the event of an open
nearest-neighbor connection from $z$ to
$\{x:x_1\ge(v_t)_1+r-C_0\}$ entirely within $D_t$.

These definitions localize the whole contact sum near $U_t$.
More precisely, every site used by every $A_{t,z}$ satisfies
\begin{equation}\label{eq:trial-localization}
x_1-H\ge V+L-5r\ge\frac34V+L,
\qquad d_1(x,K_{t-})>L.
\end{equation}
Some of $D_t$ may lie behind the current line $x_1=H_{t-}$,
but all of it lies far ahead of the fixed initial line $x_1=H$.
Consequently, \eqref{eq:restart-buffer} applies uniformly
throughout $D_t$.

The set $F_t$ is connected and has horizontal span at least
$V\ge10r$.
Lemma \ref{lem:contacts} therefore gives
\[
\sum_{z\in I_t}\Pp_{p_c}(A_{t,z})\ge cV^\delta.
\]
Put $q(x)=1-e^{-u_{t-}(x)}$, and let $\Pp_q$ denote the
product measure with these occupation probabilities.
Couple it to critical percolation using independent uniforms
at the sites.
Since $q(x)\le p_c$, the probability of a disagreement at
$x\in D_t$ is
\[
p_c-q(x)\le\lambda_c-u_{t-}(x)
\le\lambda_c\Pp(\operatorname{Pois}(2)\ge L)+CV^{-8},
\]
where we used \eqref{eq:trial-localization}.

Both $|D_t|$ and $|I_t|$ are at most $CV^2$.
Every $A_{t,z}$ depends only on sites in $D_t$, so a union bound
gives
\[
\sum_{z\in I_t}
\bigl[\Pp_{p_c}(A_{t,z})-\Pp_q(A_{t,z})\bigr]
\le
CV^4\bigl[\Pp(\operatorname{Pois}(2)\ge L)+V^{-8}\bigr].
\]
The right side tends to zero, since
$\Pp(\operatorname{Pois}(2)\ge L)
\le(2e/L)^L=V^{-8+o(1)}$.
Hence, for sufficiently large $V$,
\begin{equation}\label{eq:trial-contact-sum}
\sum_{z\in I_t}\Pp_q(A_{t,z})\ge cV^\delta/2.
\end{equation}

\smallskip
\noindent\emph{Connecting the contacts to the aggregate.}
For each $z\in I_t$, choose a shortest nearest-neighbor path
$(y_0,\ldots,y_\ell)$ from $\Ecal_{t-}$ to $z$.
Its length satisfies $\ell\le L+2C_0+1$.
Its first new vertex $w(z)=y_1$ is on the vacant outer boundary,
and all subsequent vertices lie outside $K_{t-}$.

Moreover, \eqref{eq:trial-localization} gives
\[
(y_i)_1-H
\ge z_1-H-\ell
\ge\frac34V-2C_0-1>0
\]
for large $V$.
Thus every vertex $y_i$, $i\ge2$, has free particle intensity at least $m_*$.
Only this positive lower bound is needed along the connecting
path; the near-critical estimate was used inside $D_t$.

Consider an entrance along $y_2\to w(z)$.
Its conditional intensity is $u_{t-}(y_2)/2\ge m_*/2$.
After removing the triggering particle from $y_2$, the remaining
counts retain their independent Poisson laws.
Indeed, for a Poisson count configuration $N$ and every
nonnegative measurable function $f$,
\[
\Ee[N(y_2)f(N-\delta_{y_2})]
=u_{t-}(y_2)\Ee f(N).
\]
The weighting by $N(y_2)$ is exactly that produced by the jump
rate $N(y_2)/2$.

Put $b_0=1-e^{-m_*}>0$.
The probability that all $y_2,\ldots,y_\ell$ remain occupied
after setting aside the triggering particle is at least
$b_0^{L+2C_0}$.
This event and $A_{t,z}$ are increasing, so Harris's inequality
bounds their joint probability below by
$b_0^{L+2C_0}\Pp_q(A_{t,z})$.
Their occurrence forces the engulfment to reach first coordinate
at least
$(v_t)_1+r-C_0>H+1.01V$.

For each entrance site $w$, retain one assigned contact maximizing
$\Pp_q(A_{t,z})$.
At most $C(L+1)^2$ contacts are assigned to $w$, since each is
within graph distance $L+2C_0$ of $w$.
Summing over distinct entrance sites and using
\eqref{eq:trial-contact-sum}, the conditional intensity $h_t$
of updates satisfying $V_t\ge1.01V_{t-}$ obeys
\begin{equation}\label{eq:macro}
h_t\ge
\frac{c\,b_0^{L+2C_0}}{(L+1)^2}
\sum_{z\in I_t}\Pp_q(A_{t,z})
\ge c_1V^\eta,
\qquad \eta=\delta/2>0,
\end{equation}
for $V\ge V_*$.
Here $b_0^{L+2C_0}/(L+1)^2=V^{-o(1)}$.
The constants are uniform over the aggregate and the revealed past.

\smallskip
\noindent\emph{Reaching infinitely many levels.}
At criticality, every individual engulfment is finite:
its component is dominated by critical percolation with one
additional open vertex.
Thus explosion can occur only through accumulation of updates.

Put $a=1.01$, take $M\ge V_*$, and work on
$F_M\cap\{T>\delta_0\}$.
Set $V_t=\infty$ after $T$ and define
\[
\sigma_n=
\inf\{t\ge\delta_0:V_t\ge a^nM\}\wedge T\wedge1.
\]
Then $\sigma_0=\delta_0$ and $\sigma_n\uparrow T\wedge1$,
because finitely many finite updates cannot reach unbounded levels.

While $a^nM\le V_t<a^{n+1}M$, every update counted in
\eqref{eq:macro} crosses the next level.
Its intensity is at least $c_n=c_1(a^nM)^\eta$.
We prove the mean waiting-time bound directly.

Stop additionally at
$\rho_k=\sigma_{n+1}\wedge
\inf\{t\ge\sigma_n:|\Ecal_t|\ge k\}$.
Let $Q_{n,k}$ count the updates satisfying $V_s\ge aV_{s-}$
in $(\sigma_n,\rho_k]$.
There is at most one such update.
Before the additional stop, the total entrance rate is bounded
by $8\lambda_c k$, so compensation gives
\[
c_n\Ee[\rho_k-\sigma_n\mid\widehat{\Gcal}_{\sigma_n}]
\le
\Ee\left[\int_{\sigma_n}^{\rho_k}h_s\,ds
          \,\middle|\,\widehat{\Gcal}_{\sigma_n}\right]
=
\Ee[Q_{n,k}\mid\widehat{\Gcal}_{\sigma_n}]
\le1.
\]
Letting $k\to\infty$ by monotone convergence yields
$\Ee[\sigma_{n+1}-\sigma_n\mid
\widehat{\Gcal}_{\sigma_n}]\le c_n^{-1}$.
Levels already passed have zero waiting time.

Summing and conditioning at $\delta_0$, we obtain
\[
\Ee[(T\wedge1)-\delta_0\mid\widehat{\Gcal}_{\delta_0}]
\le
\sum_{n=0}^\infty\frac1{c_1(a^nM)^\eta}
=
\frac{M^{-\eta}}{c_1(1-a^{-\eta})}
\]
on $F_M\cap\{T>\delta_0\}$.
Choose $M$ so that the last quantity is at most
$(1-\delta_0)/2$.
Since $(T\wedge1)-\delta_0=1-\delta_0$ on $\{T\ge1\}$,
Markov's inequality gives conditional probability at least
$1/2$ of explosion before time $1$.

Explosion by $\delta_0$ already counts as success. Therefore
\[
\Pp(T<1)
\ge
\Pp(F_M\cap\{T\le\delta_0\})
+\frac12\Pp(F_M\cap\{T>\delta_0\})
\ge p_M/2.
\]
Taking $p_*=p_M/2$ proves the lemma.
\end{proof}

\begin{samepage}
\subsection{Almost-sure explosion}

\begin{proof}[Proof of Theorem \ref{thm:critical}]
Since $\lambda_c\ge\log2>2/3$, Lemma \ref{lem:good} applies.
At a good history, Lemma \ref{lem:profile-density} gives exactly the
intensity assumption of Lemma \ref{lem:restart}. That lemma supplies
a uniform positive chance of explosion within one further time unit.
Lemma \ref{lem:repetition} therefore gives $T<\infty$ almost surely.
Lemma \ref{lem:finite-update} gives $T_I=\infty$.
\end{proof}

\end{samepage}

\section{Higher dimensions}\label{sec:dimensions}

In this section, we prove Corollary \ref{cor:dimensions}.
The Poisson exploration and subcritical estimates extend directly
to $\Z^d$. The lower bound on forward growth improves because
particles have more ways to reach the aggregate.

Throughout the proof, walks have total rate $d$: each coordinate
has rate one, and each oriented jump has rate $1/2$.
Speed bounds must therefore be divided by $d$ in the original clock.
Recall that $p_c(d)=p_c^{\rm site}(\Z^d)$,
$\lambda_c(d)=-\log(1-p_c(d))$,
$K_t=\Ecal_t\cup\partial^o\Ecal_t$ and
$H_t=\max_{x\in K_t}x_1=h_{e_1}(t)+1$.
As before, $T$ includes accumulation of finite updates, whereas
$T_I$ is the time of an infinite engulfment.
All statements about the particle field are made before $T$.

\subsection{Poisson densities and subcritical growth}
\label{subsec:dimensions-subcritical}

The conditional Poisson law and density estimates in
Lemma \ref{lem:density} hold in every dimension, as their proof
uses only Poisson exploration and symmetry of the walk kernel.
In the present clock, they give
\begin{equation}\label{eq:d-dimensional-density}
 u_t(x)=\lambda\Pp_x(X_r\notin K_{t-r},\ 0\le r\le t),
 \qquad
 \sum_x(\lambda-u_t(x))\le\lambda|K_t|(1+dt).
\end{equation}
Here $X$ is an independent rate-$d$ walk, with the aggregate history
fixed in the probability. The conditional counts are independent
Poisson variables, $0\le u_t(x)\le\lambda$, and $u_t=0$ on $K_t$.
The entrance intensity at $w\in\partial^o\Ecal_{t-}$ is
$r_t(w)=\frac12\sum_{y\sim w}u_{t-}(y)\le d\lambda$.
These statements also hold at finite stopping times before $T$.

Suppose $p=1-e^{-\lambda}<p_c(d)$. Subcritical site clusters have
exponential size tails \cite[Theorem~3]{AV}, so
$\chi_\theta=\Ee_p\left[\sum_{x\in C(0)}e^{\theta\|x\|_1}\right]<\infty$
for small $\theta\ge0$.
Put $V_\theta(F)=\sum_{x\in F}e^{\theta\|x\|_1}$ and
$m_\theta=1+2de^\theta\chi_\theta$.
The entrance comparison in the proof of Theorem \ref{thm:upper}
bounds the mean increment at $w$ by $m_\theta e^{\theta\|w\|_1}$.
Since $\sum_{w\in\partial^o F}e^{\theta\|w\|_1}
\le2de^\theta V_\theta(F)$, the drift bound becomes
$b_\theta V_\theta(F)$ with $b_\theta=2d^2\lambda e^\theta m_\theta$.
The stopped expectation argument of that theorem gives nonexplosion
and $\Ee V_\theta(\Ecal_t)\le m_\theta e^{b_\theta t}$.
A parallel exponential Markov estimate and Borel--Cantelli argument
give the linear upper bound in part (1) of Corollary \ref{cor:dimensions},
after dividing the speed bound by $d$.

\subsection{Forward growth}

Retain the profiles $Y_t$ and the quantity $S_t$ from
\eqref{eq:history}: $Y_t(r)=H_t-H_{t-r}$ for $r\le t$,
$Y_t(r)=\infty$ for $r>t$, and
$S_t=(\lambda/2)\Pp(W_r\le Y_t(r),\ r\ge0)$,
where $W$ is an independent rate-one symmetric walk on $\Z$, started at $0$.

Let $\Pi=\{x\in\Z^d:x_1=0\}$ and recall that
$\zeta=\inf\{t\ge0:X_t\cdot e_1=-1\}$ for a rate-$d$ walk $X$.
Write $H_{\{0\}}=\inf\{t\ge0:X_t=0\}$ and
$G_d^+(x,y)=\Ee_x\int_0^\zeta\one_{\{X_t=y\}}\,dt$.
Recall that $a_d=G_d^+(0,0)/2$.

\begin{lemma}\label{lem:excursions}
For every $0<\rho<1/a_d$, the forward increment estimate
\eqref{eq:increment} holds on $\Z^d$ in the rate-$d$ clock.
If $\lambda>a_d$, the conclusions of Lemmas \ref{lem:boost},
\ref{lem:good}, \ref{lem:profile-density} and \ref{lem:repetition}
also hold, with constants depending on $d$ and $\lambda$.
\end{lemma}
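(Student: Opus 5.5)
The plan is to redo the proof of Lemma \ref{lem:increment} in $\Z^d$, replacing the two planar sideways routes by all excursions near the rightmost site $U$ of $K_\sigma$. Condition on $\Gcal_\sigma$. Consider conforming particles making first visits to the hyperplane $U+\Pi=\{x:x_1=H_\sigma\}$ after $\sigma$. As in the planar case, these first visits form a Poisson process in space--time. Its rate at each site of the hyperplane is $r_\sigma(t)$, and $r_\sigma(t)=S_t$ before the next height increase.

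After a first visit at a site $y$, follow the particle's path until it leaves the closed half-space $\{x_1\ge H_\sigma\}$ by stepping to $x_1=H_\sigma-1$, i.e.\ up to the time $\zeta$ of the shifted walk. Any visit to $U$ during this excursion is an entrance into $U\in\partial^o\Ecal_\sigma$. Alternatively the particle is absorbed earlier at a site with $x_1\ge H_\sigma$. Both outcomes raise $H$, because $U$ stays on the outer boundary until $H$ increases. Thinning the arrival process to total rate $\gamma$, as before, the points producing an advance form a Poisson process in time. Its intensity integrated over $y$ is
\[
\gamma\sum_{y\in U+\Pi}\Pp_{y}(H_{\{U\}}<\zeta),
\]
where each point carries an independent delay distributed as the hitting time, given that it happens.

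By last-exit decomposition and symmetry of the killed Green function,
\[
\sum_y\Pp_y(H_{\{0\}}<\zeta)=\sum_y G_d^+(y,0)/G_d^+(0,0).
\]
Each first visit to the hyperplane occurs at rate $1/2$ from the neighbouring layer, and summing over all starting points of the killed occupation time gives $\tfrac12\sum_yG_d^+(y,0)\cdot(\text{rate of hyperplane entries})=1$ in the normalization where the direct visits in the planar proof contribute $1$. The effective gain factor is therefore $1/(2a_d)\cdot$(the planar normalization), i.e.\ $\rho<1/a_d$ replaces $\rho<3/2$. As a sanity check, the planar truncation to $U$ and $U\pm e_2$ gave $3/2\le 1/a_2$.

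The moment generating computation then goes through unchanged. Since the delays have exponential moments uniformly, $M_\gamma(a)\to 1/(1-a/\rho_{\max})$-type limits with $\rho_{\max}=1/a_d$. In practice I would truncate to finitely many $y$ and a bounded delay, so that the limit of $M_\gamma(a)$ as $\gamma\to0$ is $1+a\rho'/(1-a)\cdot(\ldots)$ with total mass $\rho'$ arbitrarily close to $1/a_d$. This gives \eqref{eq:increment} for every $\rho<1/a_d$.

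For the remaining lemmas, Lemma \ref{lem:barrier} concerns only the one-dimensional walk $W$, and is unchanged since the first coordinate has rate one. Lemma \ref{lem:boost} then needs $1<\kappa<\rho\lambda(1-\varepsilon)$, which is achievable exactly when $\lambda>a_d$. The proofs of Lemmas \ref{lem:boost}, \ref{lem:good}, \ref{lem:profile-density} and \ref{lem:repetition} use only \eqref{eq:increment}, the one-dimensional barrier estimates, and \eqref{eq:d-dimensional-density}, so they transfer verbatim.

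The main obstacle is justifying the Green-function identity with the correct rate normalization and the independence of distinct excursions. I would handle this by noting that each trajectory has a single first hyperplane visit. The marked Poisson process of (site, post-visit path) is then Poisson, and advances come from disjoint trajectories, as in the planar argument.
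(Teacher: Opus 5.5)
Your proposal follows essentially the paper's route: Poisson first visits to the fixed hyperplane $x_1=H_\sigma$ at common rate $S^\sigma(t)$, selection of crossings at $U+v$ whose continuation reaches $U$ before leaving $\{x_1\ge H_\sigma\}$, the identity $\sum_{v\in\Pi}q(v)=1/a_d$, truncation to a finite $D$ and time horizon $L$ with $Q=\sum_{v\in D}q_L(v)>\rho$, and then the choice $1/\lambda<\rho<1/a_d$ to transfer Lemmas \ref{lem:boost}, \ref{lem:good}, \ref{lem:profile-density} and \ref{lem:repetition}. Two corrections are needed: first, the hitting delays do not have uniform exponential moments (in general they have only polynomial tails), so the truncation giving $\theta_\sigma-\sigma\le E+L$ with $E\sim\operatorname{Exp}(Q\gamma)$ is what actually makes the moment-generating step work; second, the normalization is simply $\sum_{y\in\Pi}G_d^+(0,y)=\Ee_0\int_0^\zeta\one_{\{X_t\cdot e_1=0\}}\,dt=2$ (your exit count gives this: exactly one jump to level $-1$ occurs, at rate $1/2$ while on level $0$, whereas the paper counts a geometric number of visits with mean $2$, each of mean duration $1$), so $\sum_y q(y)=2/G_d^+(0,0)=1/a_d$ with no extra factor.
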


\begin{proof}
For $v\in\Pi$, put $q(v)=\Pp_v(H_{\{0\}}<\zeta)$.
The strong Markov property gives
$G_d^+(v,0)=q(v)G_d^+(0,0)$.
Symmetry gives $G_d^+(v,0)=G_d^+(0,v)$.
Summing over $v\in\Pi$ and interchanging the sum, integral and
expectation by monotone convergence, we obtain
\begin{equation}\label{eq:excursion-mass}
 \sum_{v\in\Pi}q(v)
 =\frac{\Ee_0\left[\int_0^\zeta\one_{\{X_t\cdot e_1=0\}}\,dt\right]}
        {G_d^+(0,0)}
 =\frac2{G_d^+(0,0)}=\frac1{a_d}.
\end{equation}
The second equality holds because, before reaching $-1$, the first
coordinate makes a geometric number of visits to $0$ with mean $2$,
each of mean duration $1$.

Fix $\rho<1/a_d$. By monotone convergence, there are finite
$D\subset\Pi$ and $L<\infty$ such that
$Q:=\sum_{v\in D}q_L(v)>\rho$, where
$q_L(v)=\Pp_v(H_{\{0\}}\le L,\ H_{\{0\}}<\zeta)$.

Condition on $\Gcal_\sigma$ at a finite stopping time $\sigma<T$.
Choose the lexicographically first rightmost vertex $U$ of $K_\sigma$.
Since $H_\sigma=h_{e_1}(\sigma)+1$, $U$ lies on the vacant outer boundary.
Select the prescribed trajectories whose first coordinate has stayed
strictly above $H_r$ for every $r\le\sigma$, and consider their
first subsequent visits to the fixed plane $x_1=H_\sigma$.
The trajectory exploration in the proof of Lemma \ref{lem:density}
leaves the free prescribed walks Poisson, restricted to paths avoiding
$K_r$ for $0\le r\le\sigma$.
Since staying ahead of $H_r$ implies this avoidance, the selected
trajectories have conditional intensity
$\lambda\sum_x\one_{\{\omega(r)\cdot e_1>H_r,\ 0\le r\le\sigma\}}
\Pp_x(d\omega)$, where $\Pp_x$ is the rate-$d$ walk law.
Mapping each selected trajectory to the time and site of its first
visit to the fixed plane, and using the calculation preceding
Lemma \ref{lem:increment}, shows that the crossings into
$U+v$, $v\in D$, form independent Poisson processes with common rate
\begin{equation*}
 S^\sigma(t)=\frac\lambda2\Pp\bigl(
 W_r\le H_\sigma-H_{(t-r)\wedge\sigma},\ 0\le r\le t\bigr),
 \qquad t\ge\sigma.
\end{equation*}
This rate is deterministic given $\Gcal_\sigma$ and equals $S_t$
before the next increase of $H$ or explosion.

A crossing into $U+v$ is selected if its prescribed continuation
hits $U$ within time $L$ without entering $\{x:x_1<H_\sigma\}$.
By the strong Markov property at the crossing time, its selection
probability is $q_L(v)$.
The selected crossing processes are therefore independent Poisson
processes with rates $q_L(v)S^\sigma(t)$
\cite[Theorems~5.2 and 5.6]{LP}.
Each selected crossing forces a height increase within time $L$,
unless explosion occurs first. Indeed, absorption after $\sigma$
and before the crossing, or during the prescribed continuation,
already adds a site with first coordinate at least $H_\sigma$.
If no such absorption occurs, the particle reaches $U$, which is
still on the outer boundary.

Let $\gamma>0$ be the density threshold in \eqref{eq:increment}, and put
\begin{equation*}
 c_\sigma=\inf\{t\ge\sigma:S^\sigma(t)<\gamma\},
 \qquad
 \theta_\sigma=T\wedge c_\sigma\wedge
 \inf\{t\in[\sigma,T):H_t>H_\sigma\}.
\end{equation*}
If $S^\sigma(\sigma)<\gamma$, then $\theta_\sigma=\sigma$.
Otherwise, assign to each selected crossing an independent uniform
variable on $[0,1]$, independent of all trajectories.
Before $c_\sigma$, retain a crossing at time $t$ precisely when
its variable is at most $\gamma/S^\sigma(t)$.
This thinning makes the union of retained crossings a Poisson process
of rate $\sum_{v\in D}q_L(v)\gamma=Q\gamma$ on $[\sigma,c_\sigma)$.
Continue it after $c_\sigma$ with an independent Poisson process
of the same rate. Its first waiting time $E$ has law
$\operatorname{Exp}(Q\gamma)$.
Since $\theta_\sigma\le c_\sigma$, either stopping precedes this
first point or that point forces stopping within another $L$ units
of time. Thus $\theta_\sigma-\sigma\le E+L$ in this coupling.

Consequently, for $0<\vartheta<Q$,
\begin{equation*}
 \Ee[e^{\gamma\vartheta(\theta_\sigma-\sigma)}\mid\Gcal_\sigma]
 \le M_\gamma(\vartheta):=
 e^{\gamma\vartheta L}\frac{Q}{Q-\vartheta}.
\end{equation*}
Since $\rho<Q$, for sufficiently small $\vartheta>0$ there are
$c,\gamma_0>0$ such that
\begin{equation*}
 \rho\log M_\gamma(\vartheta)\le\vartheta-c,
 \qquad\text{for all }0<\gamma\le\gamma_0.
\end{equation*}
The conditional iteration in Lemma \ref{lem:increment} now proves
\eqref{eq:increment}. The auxiliary selections are not included in
$\Gcal$; each comparison is conditioned on the original aggregate
history, and no independence of successive waiting times is asserted.

If $\lambda>a_d$, choose $1/\lambda<\rho<1/a_d$, then
$\varepsilon>0$ small enough that $\rho\lambda(1-\varepsilon)>1$,
and finally $1<\kappa<\rho\lambda(1-\varepsilon)$.
Lemma \ref{lem:barrier} concerns only the rate-one first coordinate.
Thus the proofs of Lemmas \ref{lem:boost} and \ref{lem:good} apply
with these constants, and Lemma \ref{lem:profile-density} gives the
same density bound ahead of $H$.
Finally, the proof of Lemma \ref{lem:repetition} uses its assumption
$\lambda>2/3$ only to invoke Lemma \ref{lem:good}; it therefore applies
under $\lambda>a_d$ as well.
\end{proof}

For $a_d<\lambda<\lambda_c(d)$, the nonexplosion argument in
Subsection \ref{subsec:dimensions-subcritical} gives global existence.
Together with Lemma \ref{lem:excursions}, this supplies the inputs
to the recovery argument in the proof of Theorem \ref{thm:linear}.
It gives positive lower speed in direction $e_1$.
Coordinate permutations and reflections give the result simultaneously
in all $2d$ coordinate directions. Dividing the speed by $d$ proves
part (2) of Corollary \ref{cor:dimensions}.

\subsection{Comparison with the percolation threshold}

\begin{lemma}\label{lem:ad-all-dimensions}
For every $d\ge2$, $a_d<\lambda_c(d)$.
\end{lemma}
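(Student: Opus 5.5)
The plan is to bound $a_d$ above by an elementary quantity and $\lambda_c(d)$ below by a percolation comparison, and then compare the two.

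\emph{Computing $a_d$.} Work with the rate-$d$ walk of the definition \eqref{eq:ad-definition}. Each visit to $0$ has mean duration $1/d$, since the total jump rate is $d$. Hence $G_d^+(0,0)=d^{-1}/(1-\pi_d)$, where $\pi_d$ is the probability that the embedded discrete walk started at $0$ returns to $0$ before its first coordinate reaches $-1$. This gives
\[
a_d=\frac{1}{2d(1-\pi_d)}.
\]

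\emph{Bounding $\pi_d$.} With probability $1/(2d)$ the first step is $-e_1$, which kills the walk immediately. Any return to the origin is a closed loop of even length at least $2$, so it needs at least one step after the first. It follows that
\[
\pi_d\le 1-\frac1{2d}\Bigl(1+\frac{c}{d}\Bigr)
\]
for some $c>0$: from any site the walk has a fresh chance $1/(2d)$ of stepping in direction $-e_1$ before closing the loop. In particular we get the crude bound $1-\pi_d\ge 1/(2d)$, hence $a_d\le 1$, and a slightly refined bound $a_d\le 1/(1+c/d)<1$.

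Cruder but cleaner is to use the comparison with the one-dimensional first coordinate. The time the full walk spends at $0$ before $\zeta$ is at most the time its first coordinate spends at $0$ before hitting $-1$. That time equals $2$ in the rate-one clock of the first coordinate; this is exactly the computation in \eqref{eq:excursion-mass}. So $G_d^+(0,0)\le 2$ and $a_d\le 1$, with strict inequality because the walk leaves $0$ while its first coordinate stays $0$ with positive probability. Quantitatively, $a_d\le 1/(2d)+O(d^{-2})$ for large $d$. For planar comparison, the paper already notes $a_2<2/3$, and I would verify the same bound here via $\pi_2\le 1/4$, using that a return in two steps has probability at most $3/16$ after excluding the first step $-e_1$.

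\emph{Bounding $\lambda_c(d)$ below.} Use the standard bound $p_c^{\rm site}(\Z^d)\ge 1/(2d-1)$, which follows from counting self-avoiding paths: $C(0)$ infinite requires open self-avoiding paths of every length, and there are at most $2d(2d-1)^{n-1}$ of length $n$. Hence
\[
\lambda_c(d)\ge -\log\Bigl(1-\frac1{2d-1}\Bigr)>\frac1{2d-1}.
\]
For $d=2$, Lemma \ref{lem:percolation} gives $\lambda_c\ge\log 2>2/3>a_2$.

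\emph{Comparing, and the main obstacle.} It remains to show $a_d<1/(2d-1)$ for $d\ge3$, i.e. $2d(1-\pi_d)>2d-1$, i.e. $\pi_d<1/(2d)$. The main obstacle is exactly this sharp bound on the return probability $\pi_d$. My plan is to compare with the unrestricted return probability of simple random walk on $\Z^d$. That quantity is about $1/(2d)+O(d^{-2})$, so it is too close to use directly. Restricting to $x_1\ge0$ removes the half-space, and in particular kills every return path that uses a $-e_1$ step before a $+e_1$ step.

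Concretely, I will bound $\pi_d$ by summing over loop lengths. Length-$2$ returns that avoid a first $-e_1$ step have probability $(2d-1)/(2d)^2<1/(2d)$. Longer returns contribute $O(d^{-2})$, and I would control them by counting loops weighted by $(2d)^{-n}$. That gives $\pi_d\le (2d-1)/(4d^2)+C/d^2$, which is below $1/(2d)$ only for large $d$, so the small cases need separate treatment. If the margin fails there, the fallback is the known sharper bound $p_c(d)\ge 1/(2d-2)$ together with explicit numerics for $d=3,4$: Watson's integral gives return probability about $0.34$ in $d=3$, and the half-space restriction lowers it further. This small-dimension bookkeeping is where I expect the real work to be.
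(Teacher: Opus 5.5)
\textbf{There is a genuine gap: your reduction for $d\ge3$ aims at an inequality that is false.} You weaken $\lambda_c(d)\ge\log\frac{2d-1}{2d-2}$ to $\frac1{2d-1}$. You then need $a_d<\frac1{2d-1}$, i.e.\ $\pi_d<\frac1{2d}$. But $\pi_d>\frac1{2d}$ in every dimension $d\ge3$, by the following count.
The two-step returns give $\frac{2d-1}{4d^2}=\frac1{2d}-\frac1{4d^2}$.
The four-step first returns that stay in $\{x_1\ge0\}$ number $8(d-1)^2+2d-1$. Among the $2d$ such loops using a single coordinate, only $-e_1,-e_1,e_1,e_1$ is killed. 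Among the $16\binom d2$ using two coordinates, exactly half of the $16(d-1)$ involving $\pm e_1$ are killed. These loops contribute $\frac{8d^2-14d+7}{16d^4}\approx\frac1{2d^2}$.
Hence $\pi_d\ge\frac1{2d}+\frac{4d^2-14d+7}{16d^4}>\frac1{2d}$ for all $d\ge3$, so $a_d>\frac1{2d-1}$. Numerically $a_3\approx0.210>0.2$. Asymptotically $a_d=\frac1{2d}+\frac1{4d^2}+\frac1{4d^3}+O(d^{-4})$, while $\frac1{2d-1}=\frac1{2d}+\frac1{4d^2}+\frac1{8d^3}+O(d^{-4})$.

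Your step ``$\pi_d\le\frac{2d-1}{4d^2}+\frac C{d^2}$, which is below $\frac1{2d}$ for large $d$'' misreads the orders. The deficit of the two-step term and the four-step correction are both of order $d^{-2}$, and the correction is twice the deficit, so no range of $d$ helps. The failure is therefore not a small-dimension issue, and your fallback, planned only for $d=3,4$, cannot cover it. Moreover, $p_c(d)\ge\frac1{2d-2}$ is not what path counting gives: self-avoiding walks give $1/\mu_d$, and $\mu_3\approx4.68>4$. It would need its own justification.
A minor slip as well: $\pi_2\le\frac14$ is false. The planar potential kernel at $2e_1$ equals $4-8/\pi=1/(1-\pi_2)$, so $\pi_2\approx0.31$ and $a_2=1-2/\pi$. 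The needed $a_2<\log 2$ still holds.

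\textbf{The missing point is that the margin lies in the second-order term of the logarithm.} We have $\ell_d=\log\frac{2d-1}{2d-2}>\frac1{2d}+\frac3{8d^2}+\frac7{24d^3}$, which exceeds $a_d\approx\frac1{2d}+\frac1{4d^2}$ by about $\frac1{8d^2}$. Linearizing $-\log(1-x)\ge x$ throws exactly this away.
The paper keeps $\ell_d$ and writes $a_d=\int_0^\infty\phi(t)^{d-1}\psi(t)\,dt$ with $\phi=e^{-t}I_0$ and $\psi=e^{-t}I_1/t$. It proves $a_d<\frac1{2d}+\frac3{8d^2}$ analytically for $d\ge15$, and certifies $3\le d\le14$ by explicit rational Riemann-sum bounds. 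The margin there is genuinely small, e.g.\ $a_3\approx0.210$ against $\ell_3\approx0.223$.
Your random-walk framework can be repaired along the same lines. Compared with $\ell_d$, any explicit bound $\pi_d\le\frac1{2d}+\frac C{d^2}$ already gives $a_d\le\frac1{2d}+\frac1{4d^2}+O(d^{-3})<\ell_d$ for large $d$. The remaining dimensions still need sharp numerics that use the half-space restriction quantitatively: at $d=3$, the unrestricted P\'olya return probability only gives $a_3\le0.253$, which exceeds $\ell_3$.
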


\begin{proof}
For $d=2$, the probabilities in \eqref{eq:excursion-mass} satisfy
$q(0)=1$, $q(\pm e_2)\ge1/4$ and $q(2e_2)>0$.
Hence $1/a_2>3/2$, and Lemma \ref{lem:percolation} gives
$a_2<2/3<\log2\le\lambda_c(2)$.

For $d\ge3$, the path-counting argument in
\cite[proof of Theorem~(1.10) and Eq.~(1.13)]{Grimmett}
gives $p_c(d)\ge1/(2d-1)$, and hence
\begin{equation}\label{eq:lambda-d-lower}
 \lambda_c(d)\ge\ell_d:=\log\frac{2d-1}{2d-2}
 >\frac1{2d}+\frac3{8d^2}+\frac7{24d^3}.
\end{equation}
The strict inequality follows by expanding
$\log(1-x)-\log(1-2x)$ at $x=1/(2d)$; all its coefficients are
positive. We bound $a_d$ below $\ell_d$, first for $d\ge15$ and
then for the remaining dimensions.

Let $I_k$ denote the modified Bessel function of the first kind, and
put $\phi(t)=e^{-t}I_0(t)$ and $\psi(t)=e^{-t}I_1(t)/t$, with
$\psi(0)=1/2$.
Each coordinate of $X$ is a rate-one walk, with transition
probabilities $\Pp_0(W_t=k)=e^{-t}I_k(t)$.
Reflection at $-1$, independence of the coordinates, and the identity
$I_0(t)-I_2(t)=2I_1(t)/t$ therefore give
\begin{equation}\label{eq:ad-integral}
 a_d=\int_0^\infty f_d(t)\,dt,
 \qquad f_d(t)=\phi(t)^{d-1}\psi(t).
\end{equation}
The Bessel formulas used here and below are
\cite[Eqs.~10.25.2, 10.29.1 and 10.32.3]{DLMF}.
Their integral representations, with an integration by parts for $\psi$,
give
\begin{equation*}
 \phi(t)=\frac1\pi\int_0^\pi e^{-t(1-\cos\theta)}\,d\theta,
 \qquad
 \psi(t)=\frac1\pi\int_0^\pi
 e^{-t(1-\cos\theta)}\sin^2\theta\,d\theta.
\end{equation*}
Thus $\phi$, $\psi$ and $f_d$ are decreasing.

Suppose first that $d\ge15$.
The Bessel series give $I_0(t)\le e^{t^2/4}$ and
$I_1(t)/t\le I_0(t)/2$, so
$f_d(t)\le e^{-dt+dt^2/4}/2$.
Using $e^x\le1+x+x^2e^x/2$ for $x\ge0$ and $t^2\le t$ on $[0,1]$,
we obtain
\begin{equation*}
 \int_0^1 f_d(t)\,dt
 \le\frac1{2d}+\frac1{4d^2}
       +\frac{d^2}{64}\int_0^\infty t^4e^{-3dt/4}\,dt
 =\frac1{2d}+\frac1{4d^2}+\frac{128}{81d^3}.
\end{equation*}
Since $\phi(1)\le e^{-3/4}<1/2$ and $f_d=\phi^{d-2}f_2$,
the bound $a_2<2/3$ gives
$\int_1^\infty f_d(t)\,dt<(2/3)2^{2-d}$.
Consequently,
\begin{equation*}
 a_d<\frac1{2d}+\frac1{4d^2}+\frac{128}{81d^3}
          +\frac23\,2^{2-d}
 <\frac1{2d}+\frac3{8d^2}<\ell_d.
\end{equation*}
Indeed, the middle inequality is equivalent to
$128/(81d)+(2/3)d^22^{2-d}<1/8$.
Both terms decrease for $d\ge15$, and at $d=15$ their sum is
$128/1215+75/4096<1/8$.

It remains to check $3\le d\le14$.
We give explicit rational upper bounds for \eqref{eq:ad-integral}.
The inequalities $1-\cos\theta\ge2\theta^2/\pi^2$ on $[0,\pi]$
and $I_1(t)\le I_0(t)$ imply
$\phi(t)\le\sqrt\pi/(2\sqrt{2t})$ and $\psi(t)\le\phi(t)/t$.
Hence
$\int_{20}^\infty f_d(t)\,dt
\le(2/d)(\pi/160)^{d/2}<2/(d7^d)$.

For $\nu=0,1$, define the polynomials
\begin{equation*}
 P_\nu(t)=\frac1{2^\nu}\left[
 \sum_{k=0}^{100}\frac{(t^2/4)^k}{k!(k+\nu)!}
 +\frac{(t^2/4)^{100}}{99\,100!(100+\nu)!}\right],
 \qquad E(t)=\sum_{k=0}^{100}\frac{t^k}{k!}.
\end{equation*}
For $0\le t\le20$, the ratio of each omitted Bessel term to its
predecessor is less than $1/100$.
The omitted sum is therefore at most the last term in the summation
divided by $99$. Thus $I_0(t)\le P_0(t)$, $I_1(t)/t\le P_1(t)$ and
$E(t)\le e^t$.
Since $f_d$ decreases, its left Riemann sum gives
\begin{equation}\label{eq:ad-certificate}
 a_d\le U_d:=
 \frac1{1000}\sum_{j=0}^{19999}
 \frac{P_0(j/1000)^{d-1}P_1(j/1000)}{E(j/1000)^d}
 +\frac2{d7^d}.
\end{equation}
All terms in $U_d$ are rational. Rounding the numerator polynomials
upwards and the denominator polynomial downwards gives the following
rational upper bounds:
\begin{center}
\begin{tabular}{c|c@{\qquad}c|c}
$d$ & Upper bound for $U_d$ & $d$ & Upper bound for $U_d$\\ \hline
3 & $0.211564$ & 9  & $0.059309$\\
4 & $0.147115$ & 10 & $0.053047$\\
5 & $0.113153$ & 11 & $0.047991$\\
6 & $0.092098$ & 12 & $0.043820$\\
7 & $0.077723$ & 13 & $0.040321$\\
8 & $0.067266$ & 14 & $0.037343$
\end{tabular}
\end{center}
Each entry is strictly smaller than
$1/(2d)+3/(8d^2)+7/(24d^3)$.
Equation \eqref{eq:lambda-d-lower} completes the proof.
\end{proof}

\subsection{Supercritical explosion}

Fix $\lambda>\lambda_c(d)$. Lemmas \ref{lem:ad-all-dimensions} and
\ref{lem:excursions} give the good-history and density estimates
needed in the proof of Theorem \ref{thm:supercritical}.
We now construct the trial for explosion in $\Z^d$.

For $p>p_c(d)$, let $\theta_{H,d}(p)$ be the probability that a fixed
boundary vertex belongs to an infinite open cluster within a coordinate
half-space of $\Z^d$. It is positive for $p>p_c(d)$:
Grimmett--Marstrand \cite[p.~440]{GM} prove equality of the half-space
and full-space thresholds for $d\ge3$, and
Lemma \ref{lem:percolation} gives the planar case.

Fix deterministic $s$ and suppose $s<T$ and $Y_s\ge z_{\alpha_*}$.
Choose the lexicographically first rightmost vertex $U$ of $K_s$
and put $x_j=U+je_1$ for $j\ge1$.
Write $q_j=1-b(j)$, with $b$ from Lemma \ref{lem:profile-density}.
Then $q_1>0$, $q_j\uparrow1$, and
$u_s(x)\ge\lambda q_j$ for $x_1\ge H_s+j$.
Choose $M\ge1$ and then $h>0$ so that
$p_M:=1-\exp\{-\lambda q_Me^{-dh}\}>p_c(d)$.
Here $e^{-dh}$ is the probability that a rate-$d$ particle makes
no jump during an interval of length $h$. Thus $p_M$ is a lower
bound on the probability that a site at distance at least $M$
ahead of $H_s$ contains such a stationary particle.

Among particles free at $s$, let $N^0(x)$ count those at $x$ whose
prescribed walks make no jump in $(s,s+h]$, and let $N^-(x)$ count
those whose first jump occurs in that interval in direction $-e_1$.
The two conditions on a particle's future walk are disjoint.
Poisson thinning therefore gives, conditionally on $\Gcal_s$,
independence among all counts $N^0(x),N^-(x)$, with respective means
$u_s(x)e^{-dh}$ and $u_s(x)(1-e^{-dh})/(2d)$.
Put $O^0=\{x:N^0(x)\ge1\}$ and $D_M=\{x:x_1\ge H_s+M\}$.
Consider the events
\begin{equation*}
 \begin{aligned}
 F_\infty&=\{|C(x_M,O^0\cap D_M)|=\infty\},\\
 F_{\rm seg}&=\bigcap_{j=1}^{M-1}\{N^0(x_j)\ge1\},
 \qquad F_{\rm jump}=\{N^-(x_1)\ge1\}.
 \end{aligned}
\end{equation*}
These events are conditionally independent, since they use disjoint
sets of the conditional Poisson particles. Put
$F=F_\infty\cap F_{\rm seg}\cap F_{\rm jump}$.
The occupied field $O^0\cap D_M$ dominates Bernoulli site percolation
of parameter $p_M$ on $D_M$, so $\Pp(F\mid\Gcal_s)\ge p$ with
\begin{equation*}
 p:=\theta_{H,d}(p_M)
 \left[1-e^{-\lambda q_1(1-e^{-dh})/(2d)}\right]
 \prod_{j=1}^{M-1}\left[1-e^{-\lambda q_je^{-dh}}\right]>0,
\end{equation*}
where an empty product equals one.

On $F$, the particles counted by $N^0$ connect $x_1$ to an infinite
occupied set and make no jump before $s+h$.
The first absorption from this connected set is an infinite engulfment.
If $T>s+h$, these particles must therefore remain free throughout
$(s,s+h]$. The particle specified by $F_{\rm jump}$ then forces
contact by its jump from $x_1$ into $U$. Absorbing that particle
or $U$ earlier would already force contact with the stationary set.
This contradiction proves $F\subseteq\{T\le s+h\}$ and hence
$\Pp(T\le s+h\mid\Gcal_s)\ge p$ on the good-history event.

To repeat the trial, put $r=s+L(s)$, with $L(s)$ given by
Lemma \ref{lem:good}, as extended to $\lambda>a_d$ in
Lemma \ref{lem:excursions}. On $\{s<T\}$, the tower property gives
\begin{align*}
 \Pp(T\le r+h\mid\Gcal_s)
 &=\Ee[\Pp(T\le r+h\mid\Gcal_r)\mid\Gcal_s]\\
 &\ge\Pp(T\le r\mid\Gcal_s)
   +p\Pp(r<T,\ Y_r\ge z_{\alpha_*}\mid\Gcal_s)
 \ge p/2.
\end{align*}
As in Lemma \ref{lem:repetition}, set $s_0=0$ and
$s_{n+1}=s_n+L(s_n)+h$.
These times are deterministic and tend to infinity because
$s_{n+1}-s_n\ge h$. Iterated conditioning gives
$\Pp(T>s_n)\le(1-p/2)^n$, so $T<\infty$ almost surely.

\subsection{Infinite engulfment at supercriticality}

We now show that, for $\lambda>\lambda_c(d)$, explosion occurs by
engulfing an infinite cluster. A component that reaches the current
supporting hyperplane has a positive conditional probability of
connecting to infinity through the unexamined field ahead.
On a bounded time interval this probability is uniform over the
history. Accumulation would require infinitely many advances in
some coordinate direction; the conditional estimate below excludes
this without assuming independence between successive attempts.

\begin{lemma}\label{lem:no-accumulation-d}
For $d\ge2$ and $\lambda>\lambda_c(d)$, $\Pp(T<T_I)=0$.
\end{lemma}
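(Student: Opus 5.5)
The plan is to show that, on a bounded time window, every update that advances the aggregate in a fixed coordinate direction has a conditional probability, bounded below uniformly in the history, of producing an infinite engulfment. Accumulation of finite updates is then exponentially unlikely. Fix an integer $n$, work on $\{T\le n\}$, and take the direction $e_1$; the other $2d-1$ directions follow by symmetry and a union bound.

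\emph{Step 1: accumulation forces infinitely many advances.} Suppose $T<T_I$ and $T\le n$. Infinitely many finite updates occur before $T$, and each adds at least one vertex. Since a bounded region contains only finitely many vertices, $\Ecal_t$ is unbounded as $t\uparrow T$. Hence some $h_e(t)\to\infty$, and in particular $h_e$ increases at infinitely many update times. It therefore suffices to show that, for each direction, there are almost surely only finitely many $H$-increasing finite updates before $T\wedge n$.

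\emph{Step 2: uniform density bounds on $[0,n]$.} Let $t\le n$ and $t<T$, and take $x$ with $x_1>H_{t-}$. In \eqref{eq:d-dimensional-density}, the reversed walk that makes no jump in $[0,t]$ stays at $x$. Since $x_1>H_{t-}\ge H_{t-r}$, it avoids every $K_{t-r}$, so
\[
u_{t-}(x)\ge m_n:=\lambda e^{-dn}>0 .
\]
For a stronger bound farther ahead, consider reversed walks whose first coordinate moves less than $M$ during $[0,n]$. This gives $u_{t-}(x)\ge\lambda\bigl(1-\Pp(\max_{r\le n}|W_r|\ge M)\bigr)$ whenever $x_1\ge H_{t-}+M$. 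Since $\lambda>\lambda_c(d)$, choose $M=M(n)$ so large that this lower bound exceeds $\lambda'$ for some fixed $\lambda'$ with $\lambda_c(d)<\lambda'<\lambda$. Consequently, in the half-space $\{x_1\ge H_{t-}+M\}$ the occupied set dominates Bernoulli percolation with parameter $1-e^{-\lambda'}>p_c(d)$.

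\emph{Step 3: one advance gives a uniform chance of an infinite engulfment.} Consider an entrance at time $t$ and apply the Palm identity from the proof of Theorem \ref{thm:upper}. The configuration seen by the component search is then independent Poisson counts with means $u_{t-}$, plus one extra particle at the arrival site. I would explore the engulfed component only inside $\{x_1<H_{t-}\}$. The update increases $H$ exactly when this component contains a site with first coordinate at least $H_{t-}$. If so, a first such site $y$ is found that is adjacent to the explored part and has $y_1=H_{t-}$. The exploration reveals the states of sites in $\{x_1\ge H_{t-}\}$ only at such boundary sites; conditionally on this, the sites in $\{x_1>H_{t-}\}$ remain independent Poisson with means $u_{t-}$. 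Now require three increasing events in this region: the sites $y+je_1$, $1\le j<M$, are occupied; the site $y+Me_1$ lies in an infinite occupied cluster of the half-space $\{x_1\ge H_{t-}+M\}$; and $y$ is occupied, which is already given. By Harris, Step 2, and half-space percolation ($\theta_{H,d}>0$, from Grimmett--Marstrand for $d\ge3$ and Lemma \ref{lem:percolation} for $d=2$), the conditional probability that the component is infinite is at least
\[
p_n:=(1-e^{-m_n})^{M-1}\,\theta_{H,d}\bigl(1-e^{-\lambda'}\bigr)>0 .
\]

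\emph{Step 4: conclusion.} Let $\varsigma_k$ be the $k$-th $H$-increasing update time. Applying Step 3 at each $\varsigma_k$, conditionally on the history just before it, gives
\[
\Pp\bigl(\varsigma_k\le n,\ \text{the first }k\text{ such updates are finite}\bigr)\le(1-p_n)^{k-1}.
\]
Letting $k\to\infty$, then taking the union over the $2d$ directions and over $n\in\mathbb N$, yields $\Pp(T<T_I)=0$.

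The main obstacle is Step 3. One must justify rigorously that partially exploring the Palm configuration inside $\{x_1<H_{t-}\}$ is a stopping-set exploration. This is what leaves the forward half-space conditionally independent Poisson with the unconditioned means, while the conditioning on the triggering entrance remains compatible with Lemma \ref{lem:density}. I would handle this by discretizing the component search site by site and applying the restriction theorem at each step, as in the proof of Lemma \ref{lem:density}.
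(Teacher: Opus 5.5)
Your proposal is correct and follows essentially the paper's route. The shared ingredients are density bounds on a bounded time window that need no good-history assumption, a uniform half-space-percolation lower bound on the conditional probability that a component reaching the plane $\{x_1=H_{t-}\}$ is infinite, and a geometric bound on finite advances in each of the $2d$ directions. The paper packages that geometric bound as the supermartingale $q^{-N_n}\one_{A_n}$ with optional stopping. This is the clean way to read your ``conditioning at $\varsigma_k$'', since whether an entrance is $H$-increasing is decided only by the component it reveals.

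The obstacle you flag in Step 3 disappears if you condition on all counts in $\{x_1\le H_{t-}\}$ at once, as the paper does with $\mathcal H^-$. Reaching the plane and the choice of $y$ are then measurable, and the forward counts stay conditionally independent Poisson by the product structure alone, so no stopping-set exploration is needed.
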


\begin{proof}
Fix $0<t_0<\infty$. For $s<T$ with $s\le t_0$,
$K_r\subseteq\{x:x_1\le H_s\}$ for every $r\le s$.
The first coordinate of the reversed walk has rate one, so
Lemma \ref{lem:density} gives, for every integer $L\ge1$,
\begin{equation}\label{eq:bounded-time-forward-density}
 \begin{aligned}
 u_s(x)&\ge\lambda e^{-t_0},
 &&x_1>H_s,\\
 u_s(x)&\ge\lambda[1-\delta_L],
 &&x_1\ge H_s+L,
 \qquad \delta_L:=\Pp(\operatorname{Pois}(t_0)\ge L).
 \end{aligned}
\end{equation}
Indeed, zero horizontal jumps suffice for the first bound, and
fewer than $L$ suffice for the second.
Only $K_r\subseteq\{x:x_1\le H_s\}$ was used, so no good-history
assumption is needed. At an entrance $t\le t_0$, the same argument
with the pre-update history gives these bounds for $u_{t-}$ and
$H_{t-}$: all earlier queried sets lie in
$\{x:x_1\le H_{t-}\}$.

Choose $L$ such that $p_*:=1-e^{-\lambda(1-\delta_L)}>p_c(d)$,
and put
$p_0=1-e^{-\lambda e^{-t_0}}$,
$\varepsilon=p_0^{L-1}\theta_{H,d}(p_*)$ and $q=1-\varepsilon$.
The half-space percolation estimate above gives $0<\varepsilon<1$.
These constants depend only on $t_0,\lambda,d$.

Consider an entrance at $t\le t_0$ with finite pre-update aggregate.
Condition on the preceding exploration, $t$, and the directed
entrance edge, and denote the corresponding $\sigma$-field by $\mathcal H$.
Put $H=H_{t-}$ and let $w$ be the arrival site.
The entrance law following Lemma \ref{lem:density} represents the
occupied set immediately after the jump, before the component search, as
$O=\{x:\xi(x)\ge1\}\cup\{w\}$, where, conditionally on $\mathcal H$,
the $\xi(x)$ are independent Poisson variables of means $u_{t-}(x)$.
Since $w\in\partial^o\Ecal_{t-}$, we have $w_1\le H$. Set
\begin{equation*}
 C=C(w,O),\qquad
 C^-=C(w,O\cap\{x:x_1\le H\}),\qquad
 B=\{C^-\cap\{x:x_1=H\}\ne\varnothing\}.
\end{equation*}
Let $\mathcal H^-=\mathcal H\vee\sigma(\xi(x):x_1\le H)$.
On $B$, select $v\in C^-\cap\{x:x_1=H\}$ using a fixed enumeration
of $\Z^d$. Conditional on $\mathcal H^-$, the counts ahead of the
plane remain independent Poisson variables with means $u_{t-}(x)$.
Put $D_L=\{x:x_1\ge H+L\}$ and define
\begin{equation*}
 F_v=\{v+je_1\in O\text{ for }1\le j<L\}
       \cap\{|C(v+Le_1,O\cap D_L)|=\infty\}.
\end{equation*}
The two requirements use disjoint sets of sites. By
\eqref{eq:bounded-time-forward-density}, on $B$,
$\Pp(F_v\mid\mathcal H^-)\ge p_0^{L-1}\theta_{H,d}(p_*)
=\varepsilon$. Moreover, $B\cap F_v\subseteq\{|C|=\infty\}$.

Let $I=\{|C|=\infty\}$ and let $J$ be the event of a finite update
increasing $H$.
Since the old aggregate has maximum first coordinate $H-1$,
\begin{equation*}
 J=\{|C|<\infty,\ \max_{x\in C}x_1\ge H\}=I^c\cap B.
\end{equation*}
The last equality follows by stopping a path from $w$ at its first
visit to the plane $x_1=H$.
Conditioning from $\mathcal H^-$ back to $\mathcal H$ gives
\begin{equation}\label{eq:record-versus-infinite}
 \begin{aligned}
 \Pp(I\cap B\mid\mathcal H)&\ge\varepsilon\Pp(B\mid\mathcal H),\\
 \Pp(J\mid\mathcal H)&\le q\Pp(B\mid\mathcal H)
 \le\frac q\varepsilon\Pp(I\mid\mathcal H).
 \end{aligned}
\end{equation}

If the initial engulfment is infinite, $T=T_I=0$.
Otherwise, condition on a finite initial exploration.
Stop the update construction at an infinite engulfment or when
the next entrance is later than $t_0$.
Let $\mathcal F_n$ record its first $n$ steps, with the construction
kept fixed after stopping.
The auxiliary half-space revelation is not included in $\mathcal F_n$.
Among these $n$ steps, let $N_n$ count finite updates increasing $H$,
and let $A_n$ be the event that no infinite engulfment has occurred.
At a step with an entrance by $t_0$, \eqref{eq:record-versus-infinite}
gives
\begin{equation*}
 \Ee[\one_{I^c}q^{-\one_J}\mid\mathcal H]
 =1-\Pp(I\mid\mathcal H)
   +\frac{\varepsilon}{q}\Pp(J\mid\mathcal H)\le1.
\end{equation*}
Define $Z_n=q^{-N_n}\one_{A_n}$. Conditioning also over the next
entrance time and edge gives
$\Ee[Z_{n+1}\mid\mathcal F_n]\le Z_n$, with $Z_0=1$.
After stopping, this inequality is an equality. Thus $(Z_n)$ is a
nonnegative supermartingale.

Put $\rho_m=\inf\{n:N_n\ge m\}$.
On $\{\rho_m<\infty\}$, $Z_{\rho_m}=q^{-m}$.
Optional stopping at $\rho_m\wedge n$ gives
\begin{equation*}
 q^{-m}\Pp(\rho_m\le n)
 \le\Ee Z_{\rho_m\wedge n}\le1,
 \qquad
 \Pp(\rho_m<\infty)\le q^m.
\end{equation*}
The same bound holds in every signed coordinate direction.

On $\{T\le t_0,\ T<T_I\}$, infinitely many finite updates occur
by $t_0$, each adding a vertex.
Their union is therefore unbounded, so some signed coordinate
maximum makes infinitely many advances.
Consequently, for every $m\ge1$,
\begin{equation*}
 \Pp(T\le t_0,\ T<T_I)\le2d\,q^m.
\end{equation*}
Letting $m\to\infty$ gives
$\Pp(T\le t_0,\ T<T_I)=0$ for each fixed $t_0<\infty$.
Taking the union over positive integer $t_0$ proves
$\Pp(T<T_I)=0$. All density estimates were applied at finite
updates, strictly before any possible accumulation time.
\end{proof}

Together with the preceding proof that $T<\infty$, this gives
$T=T_I<\infty$ almost surely for every $\lambda>\lambda_c(d)$.

\section*{Acknowledgments}
We warmly thank Yaosen Liu for discussions in the model setting. YZ is supported by NSFC-12271010, National Key R\&D Program of China (No.\ 2020YFA0712902) and the Fundamental Research Funds for the Central Universities and the Research Funds of Renmin University of China 24XNKJ06.

\bibliography{bibliography}
\bibliographystyle{plain}
\end{document}